\algrenewcommand\algorithmicrequire{\textbf{Input:}}
\algrenewcommand\algorithmicensure{\textbf{Output:}}
\newcommand{\ind}{\,\mbox{d}}
\title{Exploring low-rank structure for an inverse scattering problem with far-field data\thanks{Submitted to the editors May, 2024; updated May, 2025}}
\author{Yuyuan Zhou\thanks{Academy of Mathematics and Systems Science, Chinese Academy of Sciences, Beijing, 100190, 
China and School of Mathematical Sciences, University of Chinese Academy of Sciences, Beijing 100049, China
(\email{zhouyuyuan@amss.ac.cn})}
\and 
Lorenzo Audibert\thanks{PRISME, EDF R\&D, 6 Quai Watier, 78400 Chatou, France and UMA, Inria, ENSTA Paris, 
Institut Polytechnique de Paris, 91120 Palaiseau, France (\email{lorenzo.audibert@edf.fr})}
\and 
Shixu Meng\thanks{Department of Mathematics, Virginia Tech, 24061 Blacksburg,  USA. Corresponding author. (\email{sgl22@vt.edu})}
\and 
Bo Zhang\thanks{State Key Laboratory of Mathematical Sciences and Academy of Mathematics and Systems Science,
Chinese Academy of Sciences, Beijing 100190, China and School of Mathematical Sciences, University of Chinese
Academy of Sciences, Beijing 100049, China (\email{b.zhang@amt.ac.cn})}
}
\begin{document}

\maketitle

\begin{abstract}
In this work, we introduce a novel low-rank structure tailored for solving the inverse scattering problem.  The particular low-rank structure is given by the generalized prolate spheroidal wave functions, computed stably and accurately via a Sturm-Liouville problem.
{ We first process the far-field data to obtain a post-processed data set within a disk domain. Subsequently, the post-processed data are projected onto a low-rank space given by the low-rank structure. The unknown is approximately solved in this low-rank space, by dropping higher-order terms.    The low-rank structure leads to {  an explicit}  stability estimate for unknown functions belonging to standard Sobolev spaces, and a Lipschitz stability estimate for unknowns belonging to a finite dimensional low-rank space}.
Various numerical experiments are conducted to validate its performance, encompassing assessments of resolution capability, robustness against randomly added noise and modeling errors, and demonstration of increasing stability.
\end{abstract}

\maketitle

\begin{keywords}
inverse scattering, low-rank, Helmholtz equation, generalized prolate spheroidal wave functions,  Fourier integral, regularization
\end{keywords}

\section{Introduction}

Inverse scattering merits important applications in { nondestructive testing, seismic imaging, ocean acoustics and many other areas}. For a more comprehensive introduction, we refer to \cite{cakoni2016inverse,colton2012inverse}. This problem is very challenging due to its intrinsically ill-posed and nonlinear nature. In this work, we aim to mitigate this challenge by exploring the low-rank structure {  associated with the inverse scattering problem}. In a broader perspective, this is in the spirit of learning low dimensional feature spaces by deep learning \cite{Goodfellow16} and kernel machine learning \cite{CV1995, HSS08}. We point out recent machine learning approaches to inverse scattering { \cite{DesaiLahivaaraMonk, gao2022artificial,khooying19,2024Reconstruction, liu2022deterministic,mengzhang24, ning2023direct,  Zhou2coef}}. 

Low-rank methods are capable of mitigating the curse of dimensionality for high dimensional PDEs, see for instance the monograph  \cite{Hackbusch19} and the survey  \cite{Bachmayr23}. For the inverse scattering problem and for inverse problems in general,  { exploring low-rank structures} is a natural choice since the solution intrinsically has a low-rank structure due to its ill-posedness: putting it another way, one can think about solving a linear system with an ill-conditioned matrix and looking for a solution in a low dimensional space spanned by principal components. However unlike a simple linear system, it is  much more difficult to study the low-rank structure for inverse scattering problems. The linear sampling method \cite{ColtonKirsch96} and factorization method \cite{Kirsch98} are shape identification methods {  without assumptions such as Born approximation or physical optics approximation}. Recently it was shown in \cite{Kirsch17} that the eigenvalues of the data operator and these of its linearized version (i.e., Born data operator) decay in the same rate. The work \cite{moskowSchotland08} also shares a similar spirit on the connection between the ill-posedness of the fully nonlinear model and the Born model. Thus important insights may be drawn from the Born model.   The recent work \cite{khooying19} explored the low-rank structure of the (discretized) inverse scattering by a neural network model called SwitchNet.  In a continuous setting, the low-rank structure was explored  by \cite{meng23data} which focused on the theoretical basis for the low-rank structure and proved an explicit stability estimate  for contrast in standard Sobolev spaces. We mention that the idea of low-rank structures can also be extended to other inverse problems such as the multi-frequency inverse source problem \cite{GriesmaierSchmiedecke-source} and \cite{meng23parameter}.

The low-rank structure of \cite{khooying19} is based on the discrete Fourier transform and ideas of butterfly algorithm; see also a theoretical analysis of a butterfly-net in \cite{LiChengLu20}. The low-rank structure of \cite{meng23data} was based on the prolate {  spheroidal} wave functions and their generalizations. These special functions were studied in a series of work  \cite{Slepian61,Slepian64,Slepian78} and we refer to  the  prolate { spheroidal} wave functions in two dimensions as disk PSWFs for convenience. One of the remarkable properties of the disk PSWFs is the so-called dual property: the disk PSWFs  are the eigenfunctions of a restricted Fourier integral operator and of a Sturm-Liouville operator at the same time. It is this important dual property that leads to nontrivial and robust numerical algorithms, see \cite{osipov2013prolate} for a comprehensive introduction to the one dimensional PSWFs. For more recent studies on multidimensional generalizations of the PSWFs, we refer to \cite{GreengardSerkh18,Shkolnisky07,ZLWZ20} and the references therein. { The PSWFs, in contrast to other special functions, are the eigenfunctions of an unknown-to-data operator associated with the inverse scattering problem, making the PSWFs well-suited for this study.}

In this work we demonstrate { how to tackle the inverse scattering problem based on the low-rank structure given by the disk prolate spheroidal wave functions.}
We { demonstrate} the idea based on the linearized or Born model, 
and we show numerically by using the fully non-linear model that it is robust to linearization. There are several key ingredients in our algorithm:  evaluation of the PSWFs eigensystems, data processing, { low dimensional projection and regularization}.

To evaluate the disk PSWFs eigensystem in two dimensions, we apply the ideas of \cite{GreengardSerkh18,ZLWZ20}. In particular we compute the disk PSWFs by the Sturm-Liouville eigenvalue problem: this is robust and accurate in similar spirit to the well-known spectral methods \cite{Gottlieb77book}; it is completely different from directly computing the eigenfunction from a compact   operator {  with numerically repeated eigenvalues},  which is a much more challenging problem \cite{osborn75}. After the evaluation of the disk PSWFs, we then compute the prolate eigenvalues by the interplay between the Sturm-Liouville operator and the Fourier integral operator. More computational details may be found in \cite{P Greengard2024arxiv,GreengardSerkh18,Shkolnisky07,ZLWZ20}.
We next proceed to process the far-field data and { solve the unknown in the low-rank space via} a spectral cutoff regularization. This can be applied  to both the Born far-field data and the full far-field data. We process the far-field data to obtain a  data set in the unit disk, { leading to an explicit unknown-to-data map}.
According to appropriate mock-quadrature nodes (which are related to the disk PSWFs) in the unit disk, we continue to extract a smaller set of data (or filtered data) from the data set (which can be large scale). This idea is in similar spirit to  \cite{boydxu09}.   Using such post-processed data in the disk, we then obtain their projections onto the disk PSWFs whose corresponding prolate eigenvalues are larger than a spectral cutoff parameter. Such a low-rank structure is tailored for the inverse scattering problem.

To shed light on the potential of our algorithm, we have conducted various numerical experiments and have observed the following interesting features. First, the algorithm leads to improved resolution. We have numerically tested objects which are separated less than half wavelength, and these objects can be clearly distinguished from the reconstruction. We point out that similar improved resolution was also reported in \cite{novikov22a}. Second, the proposed algorithm is robust to both randomly added noises and modeling errors. For experiments with Born data, the performance of the algorithm remains good even when we add large random noises; for experiments with full data,
we have tested our algorithm when the Born modeling  error (computed according to \cite{Burgel19}) is not small, and the performance is still very good. This demonstrates that our algorithm has great potential to facilitate more advanced algorithms for the fully nonlinear model.  Third, the proposed algorithm has the potential to be more favorable than direct implementations of Matlab built-in fft and certain iterative methods. Our direct implementation of Matlab built-in fft requires { extending the data from a unit disk to a unit square by zero} and that the { wave number respects certain periodicity, which imposes more restrictions  on the problem  and possible blurring effect.}
The iterative methods
\cite{Burgel19} usually depend on the choice of initial guess and is more computationally expensive than our proposed algorithm. Fourth, {  since the dimension of the selected low-rank space in the spectral cutoff regularization increases with the frequency or the wave number,  the proposed numerical algorithm is capable of demonstrating the so-called increasing stability \cite{HrycakIsakov04,SubbarayappaIsakov07}.  }

The remaining of the paper is organized as follows. In Section \ref{section: model of inverse scattering} we introduce the mathematical model of the inverse scattering problem. To solve the inverse scattering problem, we need several analytical and computational results of disk PSWFs which are discussed in Section \ref{section: disk PSWFs}. {  In Section \ref{section: low-rank method} we propose the method based on the low-rank structure}. Finally we conduct various numerical experiments in Section \ref{section: numerical experiments} to illustrate the potential of our algorithm.

\section{Mathematical model of inverse scattering} \label{section: model of inverse scattering}
In this section, we introduce the inverse medium scattering problem in $\mathbb{R}^2$.  Let $k>0$ be the  wave number. A plane wave  takes the following form
\begin{equation*} %
e^{i k x \cdot \hat{\theta}}, \quad \hat{\theta} \in \mathbb{S} :=\{ x \in \mathbb{R}^2: |x|=1\},
\end{equation*}
where $\hat{\theta}$ is the direction of propagation.
Let $\Omega \subset \mathbb{R}^2$ be an open and bounded set with
Lipschitz boundary $\partial \Omega$ such that $\mathbb{R}^2 \backslash \overline{\Omega}$  is connected. To best illustrate the parameter identification theory, we let the real-valued function $q(x) \in L^\infty(\mathbb{R}^2)$ be the contrast of the medium, $q > -1$ on $\Omega$ and $q=0$ on $\mathbb{R}^2 \backslash \overline{\Omega}$  such that the support of the contrast is $\Omega$.   The scattering due to a plane wave $e^{i k x \cdot \hat{\theta} }$ is to find total wave field $u^t(x;\hat{\theta};k) = e^{i k x \cdot \hat{\theta} } + u^s(x;\hat{\theta};k)$ belonging to $H^1_{loc}(\mathbb{R}^2)$  such that
\begin{eqnarray}
\Delta_x u^t(x;\hat{\theta};k) + k^2 \left(1+q(x)\right) u^t(x;\hat{\theta};k) =  0   &\mbox{ in }&  \mathbb{R}^2,   \label{medium us eqn1}\\
\lim_{r:=|x|\to \infty} r^{\frac{1}{2}}  \big( \frac{\partial u^s(x;\hat{\theta};k)}{\partial r} -ik u^s(x;\hat{\theta};k)\big) =0,  \label{medium us eqn2}
\end{eqnarray}
{where the last equation, i.e., the Sommerfeld radiation condition, holds  uniformly for all directions} (and a solution is called   radiating   if it satisfies this radiation condition). The scattered wave field is $u^s(\cdot;\hat{\theta};k)$.   The above scattering problem \eqref{medium us eqn1}--\eqref{medium us eqn2} is a special case of the more general problem where one looks for a radiating solution $u^s \in H^1_{loc}(\mathbb{R}^2)$   to
\begin{equation} \label{medium us eqn1+2}
\Delta u^s + k^2(1+q) u^s = -k^2 q f,
\end{equation}
where $f \in L^\infty(\mathbb{R}^2)$. Setting $f(x)=e^{ikx\cdot \hat{\theta}}$ in \eqref{medium us eqn1+2} recovers \eqref{medium us eqn1}--\eqref{medium us eqn2}. This model is referred to as the full model.

It is known that there exists a unique radiating solution to \eqref{medium us eqn1+2}, cf. \cite{colton2012inverse} and \cite{Kirsch17}. For example, the solution can be obtained  with the help of the Lippmann-Schwinger integral equation,
\begin{eqnarray*}
u^s(x) - k^2 \int_\Omega \Phi(x,y)q(y)  u^s(y)  \ind y= k^2 \int_\Omega \Phi(x,y)q(y)   f(y) \ind y, \quad x \in \mathbb{R}^2.
\end{eqnarray*}
where $\Phi(x,y)$ is  the fundamental function for the Helmholtz equation given by
\begin{equation*} \label{Green function def}
\Phi(x,y)
:=
\frac{i}{4} H^{(1)}_0(k|x-y|)
\qquad x\not=y,
\end{equation*}
here $H^{(1)}_0$ denotes the Hankel function of the first kind of order zero (\cite{colton2012inverse}).
From the asymptotic of the scattered wave field (c.f. \cite{cakoni2016qualitative})
\begin{equation} \label{full model 1}
u^{s}(x;\hat{\theta};k)
=
\frac{e^{i\frac{\pi}{4}}}{\sqrt{8k\pi}} \frac{e^{ikr}}{\sqrt{r}}\left(u^{\infty}(\hat{x};\hat{\theta};k)+\mathcal{O}\left(\frac{1}{r}\right)\right)
 \quad\mbox{as }\,r=|x|\rightarrow\infty,
\end{equation}
uniformly with respect to all directions $\hat{x}:=x/|x|\in\mathbb{S}$, we arrive at $u^{\infty}(\hat{x};\hat{\theta};k)$ which is known as the  far-field data (pattern) with $\hat{x}\in\mathbb{S}$ denoting the observation direction. The multi-static data at a fixed frequency are given by
\begin{equation} \label{Section model far-field data}
\{u^{\infty}(\hat{x};\hat{\theta};k): \hat{x}\in\mathbb{S}, \hat{\theta}\in\mathbb{S}\}.
\end{equation}

The inverse scattering problem is to determine the contrast $q$ from these far-field data. It is known that this two dimensional inverse scattering problem has a unique solution, see for instance \cite{bukhgeim08}.
In practical applications, we are usually given the discrete far-field data at a set of uniformly distributed directions
\begin{equation} \label{Section model far-field data discrete}
\{u^{\infty}(\hat{x}_m;\hat{\theta}_\ell;k): ~m= 1,2,\dots, N_1, ~\ell = 1,2,\dots, N_2\}.
\end{equation}

The linear sampling method \cite{ColtonKirsch96} and factorization method \cite{Kirsch98} are shape identification methods {  without assumptions such as Born approximation or physical optics approximation}. Recently it was shown in \cite{Kirsch17} that the eigenvalues of the data operator and these of its linearized version (i.e., Born data operator) decay in the same rate. The work \cite{moskowSchotland08} also shares a similar spirit on the connection between the ill-posedness of the fully nonlinear model and the Born model. Thus important insights may be drawn from the Born model and for later purposes we introduce the Born model.
The Born approximation $u_b^s(x;\hat{\theta};k)$ is the unique radiating solution to the Born model
\begin{equation}\label{Born model}
    \Delta u^s_b+k^2u^{s}_b=-k^2q e^{ik x \cdot \hat{\theta}} \quad \mbox{in} \quad \mathbb{R}^2.
\end{equation}
From the asymptotic behavior
\begin{equation*}
    u_b^s(x;\hat{\theta};k)=\frac{e^{i\frac{\pi}{4}}}{\sqrt{8k\pi}}\frac{e^{ikr}}{\sqrt{r}}\left(u^{\infty}_b(\hat{x};\hat{\theta};k)+\mathcal{O}\left(\frac{1}{r}\right) \right) \quad \mbox{as}\quad r\rightarrow\infty,
\end{equation*}
we obtain  the Born far-field pattern $u^{\infty}_b(\hat{x};\hat{\theta};k)$, $\hat{x}=x/|x|\in \mathbb{S}$. One advantage of the Born far-field data is that we can directly obtain an explicit formula by
\begin{align}\label{Born fourier}
    u_b^\infty (\hat{x};\hat{\theta};k)&=k^2\int_\Omega e^{-ik\hat{x}\cdot y}q(y) e^{iky\cdot \hat{\theta}}\ind y.
\end{align}
This formula play a key role in our  method. If we identify $p= \frac{\hat{\theta} - \hat{x}}{2} \in B(0,1)$ where $B(0,1)$ denotes the unit disk, then the knowledge of the multi-static Born far-field data,  { i.e., $\{u_b^{\infty}(\hat{x};\hat{\theta};k) : \hat{x} \in \mathbb{S}, \hat{\theta} \in \mathbb{S}\}$,}
gives the knowledge of the restricted Fourier transform of the unknown $q$, i.e.,
$
\int_\Omega e^{ic py }q(y)  \ind y
$ for $p \in B(0,1)$ {  with $c=2k$}.
It is known that the restricted Fourier integral operator has a low-rank structure, see for instance \cite{meng23data,Slepian64} for the continuous case and \cite{khooying19} for the discrete case. In the following we show how to explore the low-rank structure to solve for our (linearized and nonlinear) inverse scattering problem.

\section{Generalization of prolate spheroidal wave functions} \label{section: disk PSWFs}
The low-rank structure of our paper is based on the generalizations of prolate spheroidal wave functions (PSWFs). The PSWFs and their generalizations were studied in a series of work by Slepian \cite{Slepian61,Slepian64,Slepian78}.  For a comprehensive introduction to the one dimensional PSWFs, we refer to \cite{osipov2013prolate}. For more recent studies on multidimensional generalizations of the PSWFs, we refer to \cite{GreengardSerkh18,Shkolnisky07,ZLWZ20}. For our inverse problem, we rely on the generalization of  PSWFs to two dimensions.
\subsection{Disk PSWFs}
According to \cite{Slepian64},  there exist real-valued  eigenfunctions $\{\psi_{m,n,l}(x;c)\}^{l\in\mathbb{I}(m)}_{m,n\in \mathbb{N}}$ of the restricted Fourier transform with bandwidth parameter c:
    \begin{align}\label{eigen_R_Fourier}
        \mathcal{F}_{B(0,1)}^c \psi_{m,n,l}(x;c)&=\int_{B(0,1)}e^{ic x\cdot y}\psi_{m,n,l}(y;c)dy \nonumber\\
                                     &=\alpha_{m,n}(c)\psi_{m,n,l}(x;c),\quad x\in B(0,1),
    \end{align}
    where $\mathbb{N}=\{0,1,2,3,\dots\}$ and
    \begin{eqnarray*}
        \mathbb{I}(m)=\left\{
            \begin{array}{cc}
                \{1\} & m=0 \\
                \{1,2\} & m \geq 1
            \end{array}\right..
    \end{eqnarray*}
For notation convenience, we refer  to $\psi_{m,n,l}(x;c)$ as the disk PSWFs and $\alpha_{m,n}(c)$ as the prolate eigenvalues in this paper.

One important property of the disk PSWFs is the so-called dual property. \cite{Slepian64} gave the Sturm-Liouville problem for the radial part and one can show by a direct calculation that the disk PSWFs are also eigenfunctions of a Sturm-Liouville operator, i.e.,
\begin{equation}\label{sturm-liouvill}
    \mathcal{D}_c [\psi_{m,n,l}](x)=\chi_{m,n} \psi_{m,n,l}(x),\quad x\in B(0,1),
\end{equation}
where
\begin{align*}
    \mathcal{D}_{c} := -(1-r^2)\partial_r^2-\frac{1}{r}\partial_r+3r\partial_r-\frac{1}{r^2}\Delta_0+c^2r^2
\end{align*}
and the Laplace–Beltrami operator $\Delta_0= \partial^2_\theta $ is the spherical part of Laplacian $\Delta$. More details can be found in \cite{meng23data} and \cite{ZLWZ20}. We refer to $\chi_{m,n}(c)$ as the Sturm-Liouville eigenvalue.

We briefly state the following properties of disk PSWFs and refer to \cite{Slepian64} and \cite{ZLWZ20} for a proof.

\begin{lemma} \label{lemma: SL}
   For any $c>0$, $\{\psi_{m,n,l}(x;c)\}^{l\in\mathbb{I}(m)}_{m,n\in \mathbb{N}}$ forms a complete and orthonormal system of $L^2(B(0,1))$, i.e., for $\forall~m,~n,~m',~n'\in\mathbb{N},~l\in\mathbb{I}(m),~l'\in\mathbb{I}(m')$ it holds that
   \begin{equation*}
       \int_{B(0,1)}\psi_{m,n,l}(y;c)\psi_{m',n',l'}(y;c)\ind y=\delta_{m m'}\delta_{n n'}\delta_{l l'},
   \end{equation*}
where $\delta$ denotes the Kronecker delta. The corresponding  Sturm-Liouville eigenvalues $\{\chi_{m,n}\}_{m,n\in \mathbb{N}}$ in \eqref{sturm-liouvill} are real positive which  are ordered for fixed $m$ as follows
       \begin{equation*}
       0<\chi_{m,0}(c)<\chi_{m,1}(c)<\chi_{m,2}(c)<\cdots .
   \end{equation*}
\end{lemma}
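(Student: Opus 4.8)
The plan is to reduce everything to the Sturm-Liouville structure behind the disk PSWFs, since the operator $\mathcal{D}_c$ is the natural self-adjoint object controlling both the orthogonality and the ordering of eigenvalues. First I would separate variables: writing $\psi_{m,n,l}(x;c)=R_{m,n}(r;c)\,T_{m,l}(\theta)$ with $T_{m,l}$ the usual angular harmonics (the eigenfunctions $\{\cos(m\theta),\sin(m\theta)\}$ of $-\Delta_0=-\partial_\theta^2$ with eigenvalue $m^2$), the operator $\mathcal{D}_c$ acting on $\psi_{m,n,l}$ collapses to a one-dimensional radial operator
\begin{equation*}
\mathcal{L}_{m,c}R := -(1-r^2)R'' -\tfrac{1}{r}R' + 3rR' + \tfrac{m^2}{r^2}R + c^2 r^2 R
\end{equation*}
on $(0,1)$. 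The orthonormality in $L^2(B(0,1))$ with the area measure $r\,dr\,d\theta$ then factors: orthogonality in $l$ (and in $m$) is immediate from the angular part, while orthogonality in $n$ for fixed $m$ must come from $\mathcal{L}_{m,c}$ being symmetric with respect to the weight $r\,dr$. So the first key step is to recast $\mathcal{L}_{m,c}$ in Sturm-Liouville form $-(pR')' + qR = \chi\, wR$ with the correct weight $w$; a short computation with an integrating factor identifies $p$, $q$, $w$, and one checks that the boundary terms in Green's formula vanish at $r=1$ (because $p(1)=0$, as $1-r^2$ degenerates there) and at $r=0$ (because of the behavior $R_{m,n}(r)\sim r^{m}$ forced by regularity, which kills the endpoint contribution). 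Symmetry then gives $(\chi_{m,n}-\chi_{m,n'})\langle R_{m,n},R_{m,n'}\rangle_w=0$, hence orthogonality whenever $\chi_{m,n}\neq\chi_{m,n'}$; reality and positivity of $\chi_{m,n}$ follow from the same quadratic form, since integrating $\langle \mathcal{L}_{m,c}R,R\rangle_w$ by parts yields a sum of manifestly nonnegative terms (the $c^2r^2$ and $m^2/r^2$ pieces, plus the $|R'|^2$ piece from $(1-r^2)$), and one argues it cannot vanish for a nonzero eigenfunction.

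Next I would address completeness. The cleanest route is to invoke the standard spectral theory for singular Sturm-Liouville problems on $(0,1)$ with a regular-singular endpoint at $0$ and a limit-circle/degenerate endpoint at $1$: for each fixed $m$, $\mathcal{L}_{m,c}$ with the natural boundary conditions has a discrete spectrum $\chi_{m,0}<\chi_{m,1}<\cdots\to\infty$ and its normalized eigenfunctions $\{R_{m,n}\}_{n\in\mathbb{N}}$ form a complete orthonormal basis of $L^2((0,1),r\,dr)$. Tensoring with the completeness of $\{T_{m,l}\}$ in $L^2(\mathbb{S}^1)$ gives completeness of $\{\psi_{m,n,l}\}$ in $L^2(B(0,1))$, and the simplicity of the radial spectrum (each $\chi_{m,n}$ is a simple eigenvalue of $\mathcal{L}_{m,c}$, again standard for one-dimensional S-L problems) gives the strict ordering $\chi_{m,0}(c)<\chi_{m,1}(c)<\cdots$ as stated. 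The positivity of each $\chi_{m,n}$ and the normalization then complete the claim. Throughout, the identification $\chi_{m,n}=\chi_{m,n}(c)$ and the fact that $\psi_{m,n,l}$ solves \eqref{sturm-liouvill} are taken as given from the preceding discussion in the excerpt.

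The main obstacle is the careful treatment of the two singular endpoints when setting up the self-adjoint realization of $\mathcal{L}_{m,c}$: one must verify that the boundary terms in Green's identity genuinely vanish (using the $r^{m}$ vanishing at the origin and the coefficient degeneracy $1-r^2\to 0$ at $r=1$), and that the eigenfunctions indeed lie in the weighted space with the right boundary behavior so that the classical Sturm-Liouville oscillation/completeness theorems apply. This is essentially a regularity-at-the-endpoints argument; once it is in place, orthogonality, reality, positivity, simplicity, and completeness all follow from textbook Sturm-Liouville theory, which is why the lemma can be cited to \cite{Slepian64} and \cite{ZLWZ20} rather than proved in detail here.
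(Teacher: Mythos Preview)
The paper does not actually prove this lemma; it states the result and refers to \cite{Slepian64} and \cite{ZLWZ20} for a proof. Your sketch is a correct outline of the standard Sturm--Liouville argument those references carry out (separate variables, reduce to a singular radial problem, verify symmetry and endpoint behavior, then invoke one-dimensional spectral theory for completeness and simplicity), and indeed you yourself note in the final paragraph that this is precisely why the lemma is cited rather than proved in the paper.
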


\begin{lemma} \label{lemma: FIO}
   For any $c>0$, every prolate eigenvalue $\alpha_{m,n}(c)$ is  non-zero, and $\lambda_{m,n} = |\alpha_{m,n}(c)|$ can be arranged for fixed $m$ as
    \begin{equation*}
       \lambda_{m,n_1}(c)>\lambda_{m,n_2}(c)>0, \quad \forall n_1<n_2.
   \end{equation*}
   Moreover $\lambda_{m,n}(c)\longrightarrow 0$  as  $m,n\longrightarrow +\infty$.

\end{lemma}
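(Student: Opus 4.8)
The plan is to exploit the dual property of the disk PSWFs, namely that each $\psi_{m,n,l}(\cdot;c)$ is simultaneously an eigenfunction of the restricted Fourier integral operator $\mathcal{F}_{B(0,1)}^c$ with eigenvalue $\alpha_{m,n}(c)$ and of the Sturm–Liouville operator $\mathcal{D}_c$ with eigenvalue $\chi_{m,n}(c)$. First I would establish that every $\alpha_{m,n}(c)\neq 0$. Suppose $\alpha_{m,n}(c)=0$ for some indices; then $\mathcal{F}_{B(0,1)}^c\psi_{m,n,l}=0$ on $B(0,1)$. The function $x\mapsto\int_{B(0,1)}e^{icx\cdot y}\psi_{m,n,l}(y;c)\,dy$ is the restriction to $B(0,1)$ of an entire function of $x\in\mathbb{C}^2$ (the integrand is entire and the domain is bounded), so vanishing on the open disk forces it to vanish on all of $\mathbb{R}^2$. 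Taking $x$ over all of $\mathbb{R}^2$ recovers the full Fourier transform of the compactly supported $L^2$ function $\psi_{m,n,l}$ (extended by zero outside $B(0,1)$), hence $\psi_{m,n,l}\equiv 0$, contradicting $\|\psi_{m,n,l}\|_{L^2(B(0,1))}=1$ from Lemma \ref{lemma: SL}. Therefore $\lambda_{m,n}(c)=|\alpha_{m,n}(c)|>0$ for all $m,n$.

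Next I would prove the strict ordering $\lambda_{m,n_1}(c)>\lambda_{m,n_2}(c)$ for $n_1<n_2$ with $m$ fixed. The key is that the Fourier integral operator restricted to a fixed angular sector behaves, on the radial part, like an integral operator whose kernel (after separation of variables in polar coordinates, bringing in Bessel functions $J_m$) is totally positive, in the sense used by Slepian. Concretely, write $\psi_{m,n,l}(r,\theta)=R_{m,n}(r)\,T_{m,l}(\theta)$; then \eqref{eigen_R_Fourier} reduces to a one-dimensional eigenvalue equation for $R_{m,n}$ with a symmetric kernel built from $J_m(c r \rho)$, and the sign pattern / oscillation theory for such kernels (Sturm oscillation for the associated Sturm–Liouville problem, which already gives $R_{m,n}$ exactly $n$ sign changes in $(0,1)$ by Lemma \ref{lemma: SL}) shows the eigenvalues are simple and strictly decreasing in absolute value in $n$. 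I would cite \cite{Slepian64} and \cite{ZLWZ20} for the total positivity / oscillation argument rather than reproduce it, and simply connect it to the $n$-indexed ordering of $\chi_{m,n}$ already recorded in Lemma \ref{lemma: SL}.

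Finally, for the decay $\lambda_{m,n}(c)\to 0$ as $m,n\to+\infty$, I would invoke compactness: $\mathcal{F}_{B(0,1)}^c$ is a Hilbert–Schmidt (indeed, smoothing) operator on $L^2(B(0,1))$ because its kernel $e^{icx\cdot y}$ is bounded on the bounded set $B(0,1)\times B(0,1)$, hence square-integrable. A Hilbert–Schmidt operator is compact, and the collection $\{\lambda_{m,n}(c)\}$ is exactly the set of singular values of $\mathcal{F}_{B(0,1)}^c$ (listed with multiplicity through the index $l\in\mathbb{I}(m)$), using that $\{\psi_{m,n,l}\}$ is a complete orthonormal system by Lemma \ref{lemma: SL}. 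Singular values of a compact operator form a sequence tending to zero, which is precisely the claim.

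The main obstacle is the strict monotonicity in the middle step: the non-vanishing and the decay are soft arguments (analyticity of the Fourier transform and compactness), but the strict inequality $\lambda_{m,n_1}>\lambda_{m,n_2}$ and the simplicity of the eigenvalues genuinely require the oscillation/total-positivity structure of the Bessel kernel, which is the delicate part of Slepian's analysis; I would lean on the cited references for that and keep the exposition here at the level of indicating why the Sturm–Liouville sign-change count transfers to the Fourier side.
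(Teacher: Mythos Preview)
The paper does not actually prove this lemma: it states it and refers the reader to \cite{Slepian64} and \cite{ZLWZ20} for the argument, so there is no in-paper proof to compare against. Your sketch is sound and is in fact a faithful outline of what those references contain: the non-vanishing via analytic continuation of the Fourier transform, the decay via Hilbert--Schmidt compactness of $\mathcal{F}_{B(0,1)}^c$, and the strict ordering in $n$ via Slepian's oscillation/total-positivity analysis of the radial Bessel kernel. Your self-assessment is accurate --- the first and third points are soft, while the second genuinely rests on \cite{Slepian64}; since the paper itself is content to cite rather than reproduce that step, your choice to do the same is entirely appropriate here.
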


Lemma \ref{lemma: FIO} states that the restricted Fourier integral operator has a low-rank structure. More importantly, Lemma \ref{lemma: SL} states that the computation of the disk PSWFs can be done using the Sturm-Liouville operator so that it is   stable and efficient.
The disk PSWFs serve as an important tool to solve the two-dimensional inverse scattering problem. We next illustrate their computations and relevant quadrature in order to propose { the reconstruction algorithm}. For a theoretical analysis of disk PSWFs with application to  inverse scattering, we refer to \cite{meng23data}.

\subsection{Evaluation of  disk PSWFs and prolate eigenvalues}
Using polar coordinates, each disk PSWF  $\psi_{m,n,l} (x;c)$ can be obtained by separation of variables by (cf. \cite{Slepian64} or \cite{meng23data})
\begin{equation*}
    \psi_{m,n,l}(x;c)={ r^m}\varphi_{m,n}(2{  r}^2-1;c)Y_{m,l}(\hat{x}),\quad x\in B(0,1),
\end{equation*}
where $x=r\hat{x}=(r\cos{\theta},r\sin{\theta})^T$ and the spherical harmonics $Y_{m,l}(\hat{x})$ are given by
\begin{align}\label{spherical_harmonic}
    Y_{m,l}(\hat{x})=\left\{
            \begin{array}{cc}
                \frac{1}{\sqrt{2\pi}}, & m=0,l=1 \\
                \frac{1}{\sqrt{\pi}}\cos{m\theta}, & m\geq 1,l=1 \\
                \frac{1}{\sqrt{\pi}}\sin{m\theta},& m\geq 1,l=2
            \end{array}\right. .
\end{align}
An efficient method to evaluate the disk PSWFs is to expand $\varphi_{m,n}(\eta;c)$ by normalized Jacobi polynomials $\{P^{(m)}_{j}(\eta)\}^{j\in\mathbb{N}}_{\eta\in (-1,1)}$,
\begin{equation}\label{expansion}
    \varphi_{m,n}(\eta;c)=\sum_{j=0}^{\infty}\beta_j^{m,n}(c) P^{(m)}_j(\eta).
\end{equation}
In the following section, with reference to \cite{ZLWZ20} we will  discuss the  evaluation of the  coefficients $\{\beta_j^{m,n}(c)\}$ and use them to determine the prolate eigenvalues.

\subsubsection{Jacobi polynomials}
We introduce the normalized Jacobi polynomials $\{P^{(\alpha,\beta)}_n(\eta)\}_{\eta\in (-1,1) }^{n\in\mathbb{N}}$ which are eigenfunctions of the following Sturm-Liouville problem,
\begin{equation}\label{jacobi polynomial}
    -\frac{1}{w_{\alpha,\beta}(\eta)}\partial_{\eta} \left(w_{\alpha+1,\beta+1}(\eta) \partial_{\eta} P^{(\alpha,\beta)}_n(\eta) \right)=n(n+\alpha+\beta+1)P^{(\alpha,\beta)}_n(\eta)
\end{equation}
where $w_{\alpha,\beta}(\eta)=(1-\eta)^\alpha (1+\eta)^{\beta}$, and they satisfy the orthogonality condition
\begin{equation}\label{orthogonal_JacobiPolynomial}
    \int_{-1}^1 P_n^{(\alpha,\beta)}(\eta)P_{n'}^{(\alpha,\beta)}(\eta)d\eta=2^{\alpha+\beta+2}\delta_{nn'}, \quad \forall n,n' \in \mathbb{N}.
\end{equation}
In this paper we work with $\alpha=0$ and $\beta = m$. Hence we write, without the danger of confusion, $P_n^{(m)} = P_n^{(0,m)}$.

The normalized Jacobi polynomials $\{P_n^{(m)}(x)\}_{x\in (-1,1) }$ can be obtained through the three-term recurrence relation
\begin{align*}
         P^{(m)}_{n+1}(x)&=\frac{1}{a_n} [ (x-b_n) P_n^{(m)}(x)-a_{n -1}P^{(m)}_{n-1}(x) ],\quad n\geq 1\\
    P^{(m)}_{0}(x)&=\frac{1}{h_0 },\quad P^{(m)}_{1}(x)=\frac{1}{2h_1  }[(m+2)x-m]
\end{align*}
where
\begin{align}\label{jacobi polynomial recurrence}
   \left\{
            \begin{array}{cc}
                a_{n}=&\frac{2(n+1)(n+m+1)}{(2 n+m+2)\sqrt{(2 n+{ m+1})(2 n+m+3)}} \\
                b_{n}=&\frac{m^{2}}{(2 n+m)(2 n+m+2)} \\
               h_{n}=& \frac{1}{\sqrt{2(2 n+m+1)}}
            \end{array}\right. , \qquad n\in\mathbb{N}.
\end{align}
For a more comprehensive introduction to special polynomials, we refer to \cite{Abramowitz64}.

\subsubsection{Appoximation of disk PSWFs system}

To compute the coefficients of \eqref{expansion},  we plug this expansion into \eqref{sturm-liouvill} and we can derive (by noting \eqref{jacobi polynomial} and the fact that
$\Delta_0 Y_{m,l}(\hat{x})=-m^2 Y_{m,l}(\hat{x})$, or by \cite{ZLWZ20}) that the  coefficients $\{\beta_j^{m,n}\}_{j\in\mathbb{N}}$ satisfy
    \begin{eqnarray} \label{tridiagonal_linear_system}
       &&\Big(\gamma_{m+2j}+\frac{(1+b_j)c^2}{2}-\chi_{m,n}(c) \Big) \beta_j^{m,n} \nonumber \\&& \hspace{0.3\linewidth} +\frac{a_{j-1}c^2}{2}\beta_{j-1}^{m,n}+\frac{a_{j}c^2}{2}\beta_{j+1}^{m,n}=0, \quad j\geq 0
    \end{eqnarray}
where $\gamma_{m+2j}=(m+2j)(m+2j+2)$. Here $\chi_{m,n}(c)$  is  the Sturm-Liouville eigenvalue.

To evaluate eigensystem $\{\psi_{m,n,l}(x;c),\chi_{m,n}\}$ for $2n+m\leq N$, we follow \cite{ZLWZ20} to set ${ \tilde{M}}=2N+30$ and obtain the approximations $\{\tilde{\psi}_{m,n,l}(x;c),\tilde{\chi}_{m,n}\}$ by
\begin{equation} \label{section: computing disk PSWFs: polynomial expansion}
    \tilde{\psi}_{m,n,l}(x ; c)=\sum_{j=0}^{K} \tilde{\beta}_{j}^{m,n} { \|x\|^m} P_j^{(m)}(2||x||^2-1)Y_{m,l}(\hat{x}), \quad 2 n+m \leq N
\end{equation}
where $K=\left\lceil\frac{{ \tilde{M}}-m}{2}\right\rceil$. As such, $\{\tilde{\beta}_{j}^{m,n}\}$ and $\tilde{\chi}_{m,n}$ can be solved from the following tridiagonal linear system
\begin{equation}\label{eigensystem}
{A}{\tilde{\beta}^{m,n}}=\tilde{\chi}_{m,n}{\tilde{\beta}^{m,n}}
\end{equation}
where ${\tilde{\beta}^{m,n}}=(\tilde{\beta}_{0}^{m,n},\tilde{\beta}_{1}^{m,n},...,\tilde{\beta}_{K}^{m,n})^T$, ${A}$ is a $(K+1)\times(K+1)$ symmetric tridiagonal matrix whose nonzero entries are given by
\begin{eqnarray*}
    A_{j,j} =\gamma_{m+2j}+\frac{(1+b_j)c^2}{2},\qquad 
    A_{j,j+1} =A_{j+1,j}=\frac{a_{j}c^2}{2}, \quad j\geq 0.
\end{eqnarray*}
Since $\| \psi_{m,n,l}(x;c)\|_{L^2(B(0,1))}=1$, the coefficient vector ${\tilde{\beta}^{m,n}} $ are normalized to satisfy
\begin{equation*}
    \| {\tilde{\beta}^{m,n}}\|_2=\sqrt{\sum_{j=0}^{K} |\tilde{\beta}_{j}^{m,n}|^2}=1
\end{equation*}
owing to $\| P_{m,j,l} \|_{L^2(B(0,1))}=1$ where $P_{m,j,l}(x)={ \|x\|^m}P_j^{(m)}(2\|x\|^2-1)Y_{m,l}(\hat{x})$. We further sort the eigenvalues by 
    $\tilde{\chi}_{m,j}<\tilde{\chi}_{m,i}$ for any $j<i$.

Finally to evaluate the prolate eigenvalues $\{\alpha_{m,n}\}_{m,n\in\mathbb{N}}$, the following formula was given in \cite{ZLWZ20}
\begin{equation}\label{eigenvalues}
   \tilde{\alpha}_{m,n}(c)={ i^{m}}\tilde{\lambda}_{m,n}(c), \quad  \tilde{\lambda}_{m,n}(c)=\frac{\pi c^m }{2^{m-\frac{1}{2}}m!\sqrt{m+1}}\frac{\tilde{\beta}_0^{m,n}}{\tilde{\varphi}_{m,n}(-1;c)}
\end{equation}
where $\tilde{\varphi}_{m,n}(-1;c)=\sum_{j=0}^{K} \tilde{\beta}^{m,n}_j P_j^{(m)}(-1)$.
This formula is sufficient for our inverse problem.

We summarize the following algorithm for the evaluation of PSWFs and prolate eigenvalues.
\begin{algorithm}
	\caption{Evaluation of {  disk} PSWF and prolate eigenvalue}
\begin{algorithmic}[1]\label{Algorithm: PSWFs}
\Require  Parameter $c$, index of disk PSWF $(m,n,l)$ and $x\in\mathbb{R}^2$.
\Ensure  $\psi_{m,n,l}(x;c)$ and prolate eigenvalue $\alpha_{m,n}(c)$.

\State Compute the tridiagonal linear system \Cref{eigensystem} (and sort the eigenvalues from small to large) to obtain the eigenvector $\tilde{\beta}^{m,n}$ and the eigenvalue $\tilde{\chi}_{m,n}(c)$ corresponding to the given $(m,n)$. Here ${\tilde{\beta}^{m,n}}=(\tilde{\beta}_{0}^{m,n},\tilde{\beta}_{1}^{m,n},...,\tilde{\beta}_{K}^{m,n})^T$.
\State  Evaluate $(P_{m,0}(2\|x\|^2-1),P_{m,1}(2\|x\|^2-1),\cdots,P_{m,K}(2\|x\|^2-1) )^T$ at $x\in\mathbb{R}^2$ by the recurrence \eqref{jacobi polynomial recurrence} and the spherical harmonics $Y_{m,l}(\hat{x})$ at $\hat{x}$  by \Cref{spherical_harmonic}.

\State Compute the approximation of disk PSWFs by $\psi_{m,n,l}(x;c) \approx{  \|x  \|^m}Y_{m,l}(\hat{x})\sum_{j=0}^K   \tilde{\beta}_{j}^{m,n} P_j^{(m)}(2\|x\|^2-1)$.
\State Evaluate the prolate eigenvalue $ \alpha_{m,n}(c) \approx{ i^{m}}\tilde{\lambda}_{m,n}(c)$ according to \Cref{eigenvalues}.
\end{algorithmic}
\end{algorithm}

\subsection{Quadrature} \label{section: quadrature}
Later on, we need to project our data onto the disk PSWFs. This motivates us to consider an appropriate quadrature for the projection (i.e. integral). In the remaining of this section, we simply identify $w(x)$ as $w(r,\theta)$ where $(r,\theta)$ is the polar representation of $x$. Here we employ a  Gauss-Legendre quadrature rule for the radial part and a trapezoidal quadrature rule for the angular part to obtain, for any function $w \in L^2(B(0,1))$
\begin{eqnarray*}
    w_{m,n,l} &=&\int_{B(0,1)} w(x)\psi_{m,n,l}(x) \ind x= \int_0^{2\pi}\int_0^1 w(r,\theta) \psi_{m,n,l}(r,\theta) r \ind r \ind\theta\\
    && (\mbox{change of variable } t=2r^2-1)\\
             &=&\frac{1}{4}\int_0^{2\pi}\int_{-1}^1 w(\sqrt{(1+t)/2},\theta) \psi_{m,n,l}(\sqrt{(1+t)/2},\theta) \ind t \ind \theta \\
             &\approx&\frac{1}{4}  \sum_{i=0}^{M-1} \sum_{j=0}^{T-1} w(\sqrt{(1+t_j)/2},\theta_i)\psi_{m,n,l}(\sqrt{(1+t_j)/2},\theta_i)\omega_{t_j}\omega_{\theta_i},
\end{eqnarray*}
where
\begin{eqnarray} \label{GL+trapezoidal quadrature}
\left\{t_j,\omega_{t_j}\right\}_{j=0}^{T-1}: \mbox{Gauss-Legendre quadrature nodes and weights},    \nonumber \\
\left\{\theta_i=\frac{2i\pi}{M},\omega_{\theta_i}=\frac{2\pi}{M}\right\}_{i=0}^{M-1}: \mbox{trapezoidal quadrature nodes and weights}.
\end{eqnarray}
{ Suppose that the data are projected onto a low-rank space $\{\psi_{m,n,\ell}\}_{(m,n,\ell) \in J_\epsilon}$ with $J_{\epsilon}:=\{ (m,n,\ell): |\alpha_{m,n}| > \epsilon \}$.
With the above quadrature and polynomial approximation error estimate  \cite{P Greengard2024arxiv}, one way to choose the number of quadrature nodes $TM$ is given by \begin{equation}\label{eq: T choice}
    T=\max{\{T(m,m',n,n'):\exists l\in \{1,2\},~s.t.,~(m,n,l),(m',n',l')\in J_\epsilon}\},
\end{equation}
and
\begin{equation}\label{eq: M choice}
    M=2m_{\rm max}+1, \quad m_{\rm max} = \max\{m: \exists l\in \{1,2\},~s.t.,~(m,n,l) \in J_\epsilon \},
\end{equation}
for the given $J_\epsilon$; here
$T(m,m',n,n')=\lceil \frac{1}{2}(\frac{m+m'+2}{2}+K(m,n)+K(m',n'))\rceil$,
and
$K(m,n)= \lceil \frac{1}{2}(\log_{1/2}\left(\frac{0.05a \epsilon}{\pi} \right)-m+\frac{1}{2})\rceil$.
We point out that it is possible to make more use of the disk PSWFs to investigate other quadratures depending on the parameter $c$, see for instance  \cite{P Greengard2024arxiv,osipov2013prolate}.}

\section{{ Proposed method based on the low-rank structure}} \label{section: low-rank method}
Now we are ready to elaborate { our proposed  method} based on the low-rank structure for the inverse scattering problem.  The two main ingredients are { data processing} and low-rank projection.
\subsection{{ Data processing}}

For the inverse scattering problem, we work with the   far-field data  which are given by \eqref{Section model far-field data discrete}, i.e.,
\begin{equation*}
\{u^{\infty}(\hat{x}_m;\hat{\theta}_\ell;k): ~m = 1,2,\dots, N_1, ~\ell = 1,2,\dots, N_2\},
\end{equation*}
where both $\{\hat{x}_m\}_{m=1}^{N_1}$ and $\{\hat{\theta}_\ell\}_{\ell=1}^{N_2}$ are uniformly distributed directions.
{ We first apply the transformation}
$$
(\hat{x}_m, \hat{\theta}_\ell) \mapsto \frac{\hat{\theta}_\ell -\hat{x}_m }{2}
$$
to transform the datum $u^{\infty}(\hat{x}_m;\hat{\theta}_\ell;k)$ on { $\mathbb{S}\times \mathbb{S}$ }to a datum evaluated at $\frac{\hat{\theta}_\ell -\hat{x}_m}{2} \in B=B(0,1)$, particularly we identify
$$
u(p_{m\ell};c) = \frac{1}{k^2}u^{\infty}(\hat{x}_m;\hat{\theta}_\ell;k), \quad p_{m\ell} = \frac{\hat{\theta}_\ell -\hat{x}_m}{2}, \quad c= 2k.
$$

{  The post-processed data set  $\{u(p):p\in B(0,1),p\neq 0\}$ is equivalent to the Born far-field data set $\{u_b^{\infty}(\hat{x};\hat{\theta};k) : \hat{x}, \hat{\theta} \in \mathbb{S},\hat{\theta}\neq \hat{x}\}$. On the one hand, the post-processed data can be derived from the Born far-field data through the transformation above. One the other hand, for every $0\neq p \in B(0,1)$, there are two solutions $(\hat{\theta}_j,\hat{x}_j)\in \mathbb{S}\times \mathbb{S}$ such that $\frac{\hat{\theta}_j-\hat{x}_j}{2}=p$  for $j=1,2$ with $\hat{\theta}_2=-\hat{x}_1,~\hat{x}_2=-\hat{\theta}_1$; due to  the reciprocity relation $u_b^\infty(\hat{\theta};\hat{x};k)=u_b^\infty(-\hat{x};-\hat{\theta};k)$, $u(p)$ can uniquely determine the Born far-field data. Since the reciprocity relation remains valid (cf. \cite{cakoni2016qualitative}) for the full model, the same equivalence between the post-processed data set and the far-field data set holds for in the full model case. } 

The new data set $\{u(p_{m\ell};c): m = 1,2,\dots, N_1, ~\ell = 1,2,\dots, N_2\}$ is of dimension $N_1N_2$.
We continue to extract a set of data from the data set $\{u(p_{m\ell};c): m = 1,2,\dots, N_1, ~\ell = 1,2,\dots, N_2\}$ (which can be large scale). Particularly, the subset $\{u(\tilde{p}_n;c): n=1,2,\cdots, TM \}$ will be extracted,  where $\{\tilde{p}_n\}_{n=1}^{T\times M}$ are mock-quadrature nodes that are approximations to  the exact quadrature nodes $\{p_n\}_{n=1}^{T\times M}$ in $B$. Here we identify $\{p_n\}_{n=1}^{TM}$ with $\big\{\sqrt{\frac{t_j+1}{2}} (\cos\theta_i,\sin \theta_i)^T \big\}_{j=0,~i=0}^{T-1,~M-1}$ where the exact quadrature nodes are given by \eqref{GL+trapezoidal quadrature}.
This idea is in similar spirit to the mock-Chebychev nodes \cite{boydxu09}.

The extraction of the approximate or mock-quadrature nodes is as follows.
Given exact quadrature nodes $\{p_n\}_{n=1}^{T\times M}$ and far-field data at the uniformly distributed directions $\{\hat{x}_m, ~\hat{\theta}_\ell: ~m = 1,2,\dots, N_1, ~\ell = 1,2,\dots, N_2\},
$
the approximate or mock-quadrature nodes $\{\tilde{p}_n\}_{n=1}^{T\times M}$ are given by
\begin{align*}
    \widetilde{p}_n =\frac{\hat{\theta}_{\ell^*}-\hat{x}_{j^*}}{2} \quad \mbox{where} \quad
    (\ell^*,j^*) =\mbox{argmin}_{\ell,j} \big\|p_n-\frac{\hat{\theta}_{\ell}-\hat{x}_{j}}{2} \big \|_2.
\end{align*}
We summarize the data extraction algorithm in \Cref{Algorithm: lrData} and illustrate the extraction in
\Cref{figure: dimensionality reduction data}, which shows the exact quadrature nodes in (a) with { $T=24$, $M=47$}, the $N_1 \times N_2 = 100\times 100$ grid nodes before extraction in (b), and the mock-quadrature nodes in (c).

Given $\{\widetilde{p}_n\}_{n=1}^{TM}$, we obtain its equivalent set in polar coordinates given by \\
$\Big\{\sqrt{\frac{\widetilde{t}_j+1}{2}} (\cos\widetilde{\theta}_i,\sin \widetilde{\theta}_i)^T \Big\}_{j=0,~i=0}^{T-1,~M-1}$ and conveniently set
$$\{\tilde{u}(\tilde{t}_j,\tilde{\theta}_i;c)\}_{j=0,~i=0}^{T-1,~M-1} =\{u(\widetilde{p}_n;c)\}_{n=1}^{TM}.
$$
We define the post-processed data (and we also refer to its continuous counterpart as the post-processed data without danger of confusion) by
\begin{equation}\label{DataMatrix}
    U =
    \begin{pmatrix}
        \tilde{u}(\tilde{t}_0,\tilde{\theta}_0;c) & \tilde{u}(\tilde{t}_0,\tilde{\theta}_1;c) & \cdots & \tilde{u}(\tilde{t}_0,\tilde{\theta}_{M-1};c) \\
        \tilde{u}(\tilde{t}_1,\theta_0;c) & \tilde{u}(\tilde{t}_1,\theta_1;c) & \cdots & \tilde{u}(\tilde{t}_1,\tilde{\theta}_{M-1};c) \\
                   \vdots & \vdots & \ddots & \vdots\\
        \tilde{u}(\tilde{t}_{T-1},\tilde{\theta}_0;c) & \tilde{u}(\tilde{t}_{T-1},\tilde{\theta}_1;c) & \cdots & \tilde{u}(\tilde{t}_{T-1},\tilde{\theta}_{M-1};c)
    \end{pmatrix}.
\end{equation}

\begin{figure}[htbp]
\centering
\subfloat[]{ \includegraphics[width=0.3\linewidth]{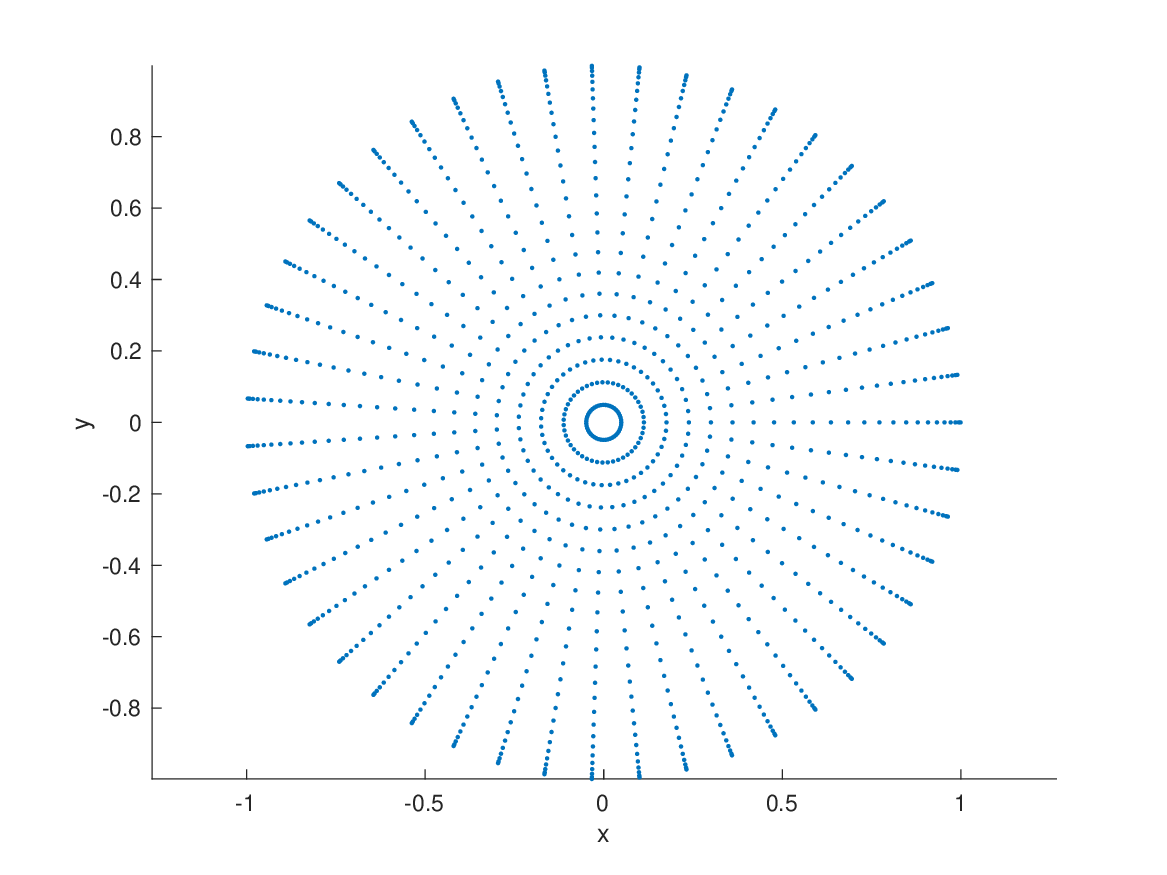} }
\subfloat[]{ \includegraphics[width=0.3\linewidth]{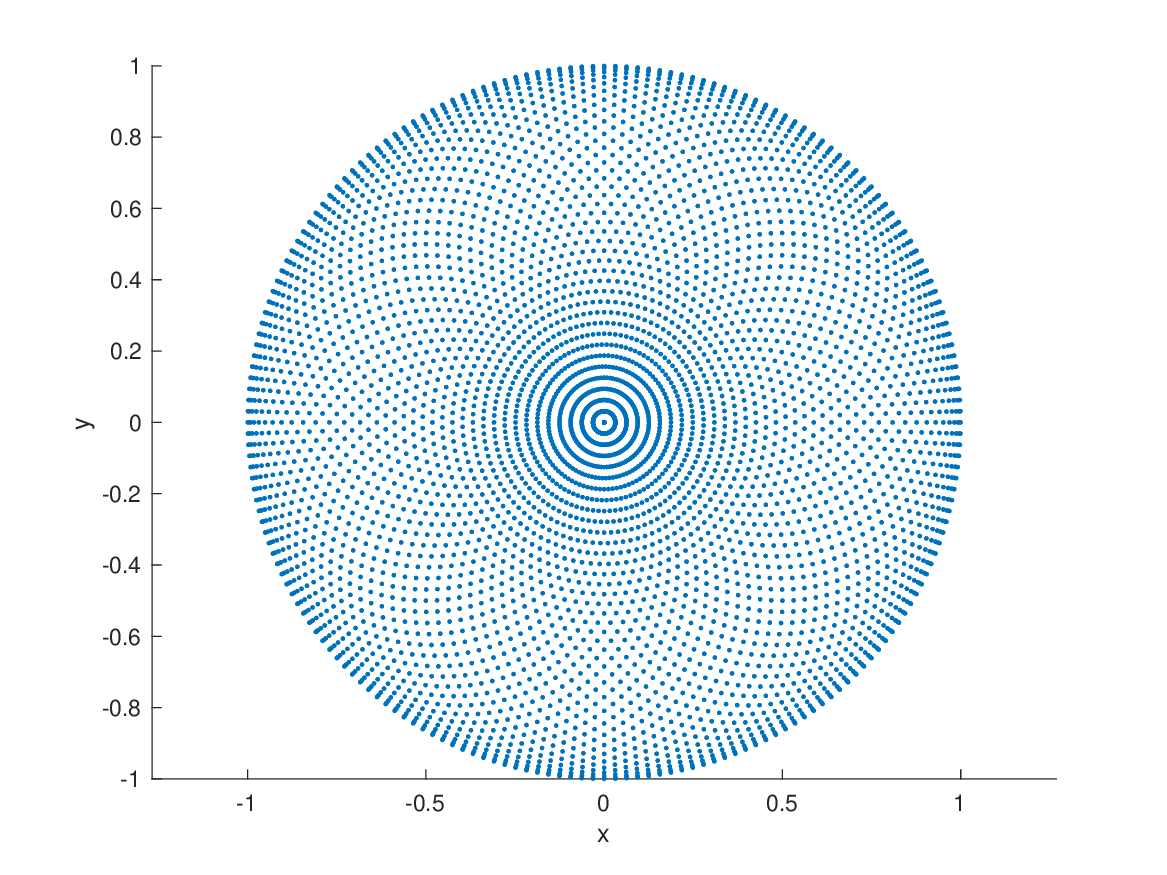} }
\subfloat[]{ \includegraphics[width=0.3\linewidth]{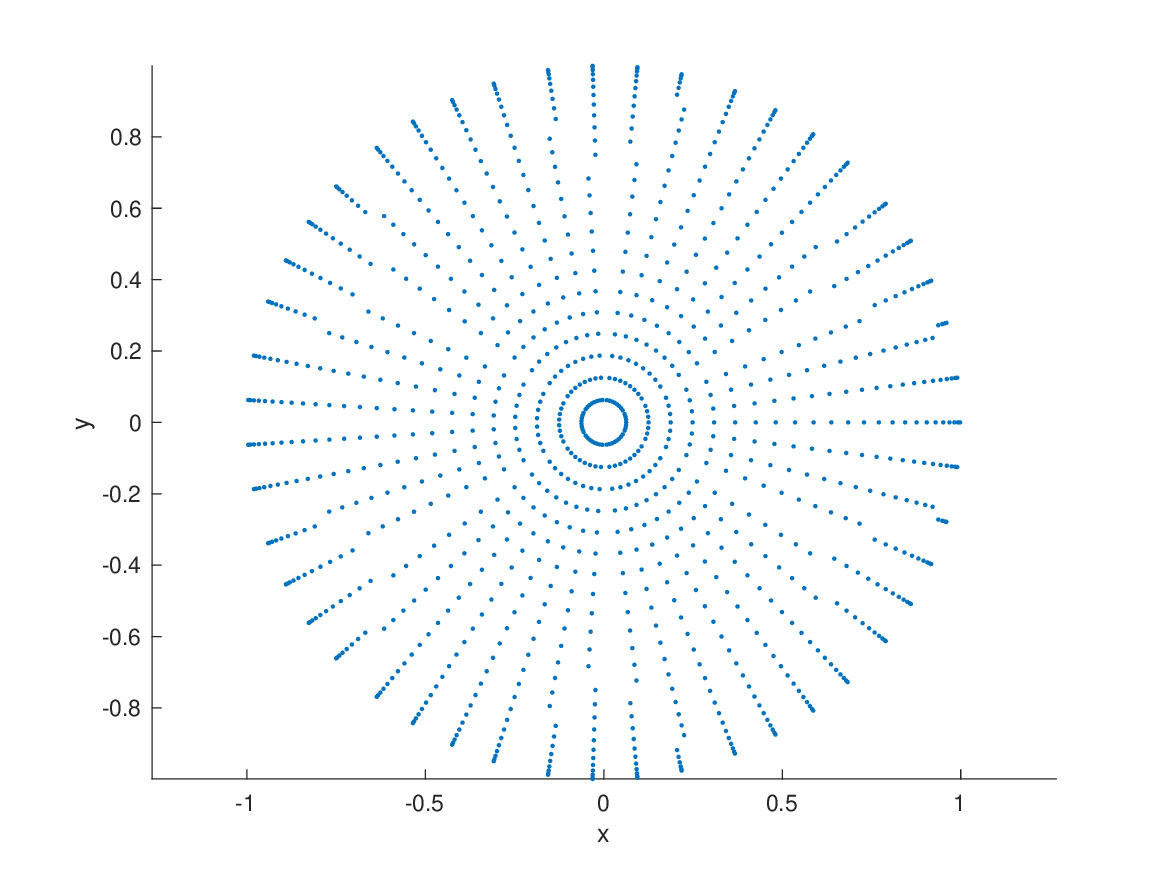} }
\caption{(a) exact quadrature nodes. (b) $N_1 \times N_2 = 100\times 100$ grid nodes before dimentionality reduction. (c) approximate or mock-quadrature nodes. } \label{figure: dimensionality reduction data}
\end{figure}

\begin{algorithm}
	\caption{{ Data Processing}}
\begin{algorithmic}[1]\label{Algorithm: lrData}
\Require  Far-field data $\{u^{\infty}(\hat{x}_m;\hat{\theta}_\ell;k): ~m = 1,2,\dots, N_1, ~\ell = 1,2,\dots, N_2\}$
\Ensure  {  Post-processed data} $U$
\State Compute  Gauss-Legendre quadrature nodes and weights $\{t_j,\omega_{t_j}\}_{j=0}^{T-1}$, and trapezoidal quadrature { nodes} and weights $\{\theta_i=\frac{2i\pi}{M},\omega_{\theta_i}=\frac{2\pi}{M}\}_{i=0}^{M-1}$. Identify $\{p_n\}_{n=1}^{TM}$ with $\big\{\sqrt{\frac{t_j+1}{2}} (\cos\theta_i,\sin \theta_i)^T \big\}_{j=0,~i=0}^{T-1,~M-1}$.
\State   Find $(\ell^*,m^*)$ such that $(\ell^*,m^*) =\mbox{argmin}_{\ell,m} \big\|p_n-\frac{\hat{\theta}_{\ell}-\hat{x}_{m}}{2} \big \|_2$

\State Compute approximate or mock-quadrature nodes $\widetilde{p}_n =\frac{\hat{\theta}_{\ell^*}-\hat{x}_{m^*}}{2}$ for $n=1,2,\dots,TM$. Identify $\{\widetilde{p}_n\}_{n=1}^{TM}$ with $\{\sqrt{\frac{\widetilde{t}_j+1}{2}} (\cos\widetilde{\theta}_i,\sin \widetilde{\theta}_i)^T \}_{j=0,~i=0}^{T-1,~M-1}$.

\State Evaluate the post-processed data $u(\widetilde{p}_n) = \frac{1}{k^2}u^\infty(\hat{x}_{m^*},\hat{\theta}_{\ell^*};k)$.
\State Formulate the matrix $U_{ji} = u(\widetilde{p}_n)$ by identifying $\{n: 1\le n \le {TM}\}$ with $\{ (j,i): j=0, 1 \dots T-1, ~i = 0, 1, \dots, M-1 \}$.
\end{algorithmic}
\end{algorithm}

\subsection{{  Low-rank reconstruction}}
\begin{algorithm}
	\caption{Low Dimensional Projection}\label{Algorithm: lrProj}
\begin{algorithmic}[1]
\Require  Matrix $U = (U_{ji})_{j=0, 1 \dots T-1, ~i = 0, 1, \dots, M-1}$ defined by \eqref{DataMatrix}, spectral cutoff regularization parameter $\epsilon$
\Ensure  The low dimensional vector $\{q_{m,n,\ell}\}_{(m,n,\ell) \in J_\epsilon}$ with $J_{\epsilon}:=\{ (m,n,\ell): { |\alpha_{m,n} |}> \epsilon \}$

\State
For all $(m,n,\ell) \in J_\epsilon$, apply the quadrature to compute
$$
    u_{m,n,l}  \approx\frac{1}{4}  \sum_{i=0}^{M-1} \sum_{j=0}^{T-1} U_{ji}\psi_{m,n,l}(\sqrt{(1+t_j)/2},\theta_i)\omega_{t_j}\omega_{\theta_i},
$$
where  $\{t_j,\omega_{t_j}\}_{j=0}^{T-1}$ and $\{\theta_i=\frac{2i\pi}{M},\omega_{\theta_i}=\frac{2\pi}{M}\}_{i=0}^{M-1}$ are quadrature nodes and weights given by \eqref{GL+trapezoidal quadrature}.

\State Compute $q_{m,n,l}=\frac{u_{m,n,l}}{\alpha_{m,n}}$ for all $(m,n,\ell) \in J_\epsilon$.
\end{algorithmic}
\end{algorithm}

In this section, we suppose that we are given the matrix $U = (U_{ji})$ with $j=0, 1 \dots T-1$ and $i = 0, 1, \dots, M-1$ defined by \eqref{DataMatrix}. Our goal is to project these data onto a { low-rank} space spanned by the disk PSWFs.

To begin with, consider the post-processed problem to determine the unknown $q\in L^2(B(0,1))$ from the post-processed data $u(x;c)$ by
\begin{equation}\label{inverse problem}
    u(x;c)=\int_{B(0,1)} e^{icx\cdot y}  q(y) \ind y, \quad x\in B(0,1),
\end{equation}
where $c>0$ is the parameter given by $c=2k$.

Given the disk PSWFs $\{\psi_{m,n,l}\}_{m,n\in\mathbb{N}}^{l\in\mathbb{I}(m)}$ which form a complete  and orthonormal basis of $L^2(B)$, we expand the unknown function $q$ and post-processed data $u$ in a series,
\begin{align*}
    q(x)=\sum_{m,n\in\mathbb{N},l\in\mathbb{I}(m)} q_{m,n,l} \psi_{m,n,l} (x;c),\qquad
    u(x;c)=\sum_{m,n\in\mathbb{N},l\in\mathbb{I}(m)} u_{m,n,l} \psi_{m,n,l} (x;c)
\end{align*}
where $u_{m,n,l}$ and $q_{m,n,l} $  are the projections of $u(x;c)$ and $q(x)$ on each  $\psi_{m,n,l} (x;c)$ respectively, i.e.,
$$
u_{m,n,l} = \langle u (\cdot;c),  \psi_{m,n,l} (\cdot;c)\rangle, \quad q_{m,n,l} = \langle q,  \psi_{m,n,l} (\cdot;c)\rangle,
$$
here $\langle \cdot, \cdot \rangle$ denotes the $L^2(B)$ inner product. From the property \eqref{eigen_R_Fourier}, one directly { finds} that
\begin{equation*}
    q_{m,n,l}=\frac{u_{m,n,l}}{\alpha_{m,n}}.
\end{equation*}

\begin{figure}[htbp]
\centering
\subfloat[]{ \includegraphics[width=0.45\linewidth]{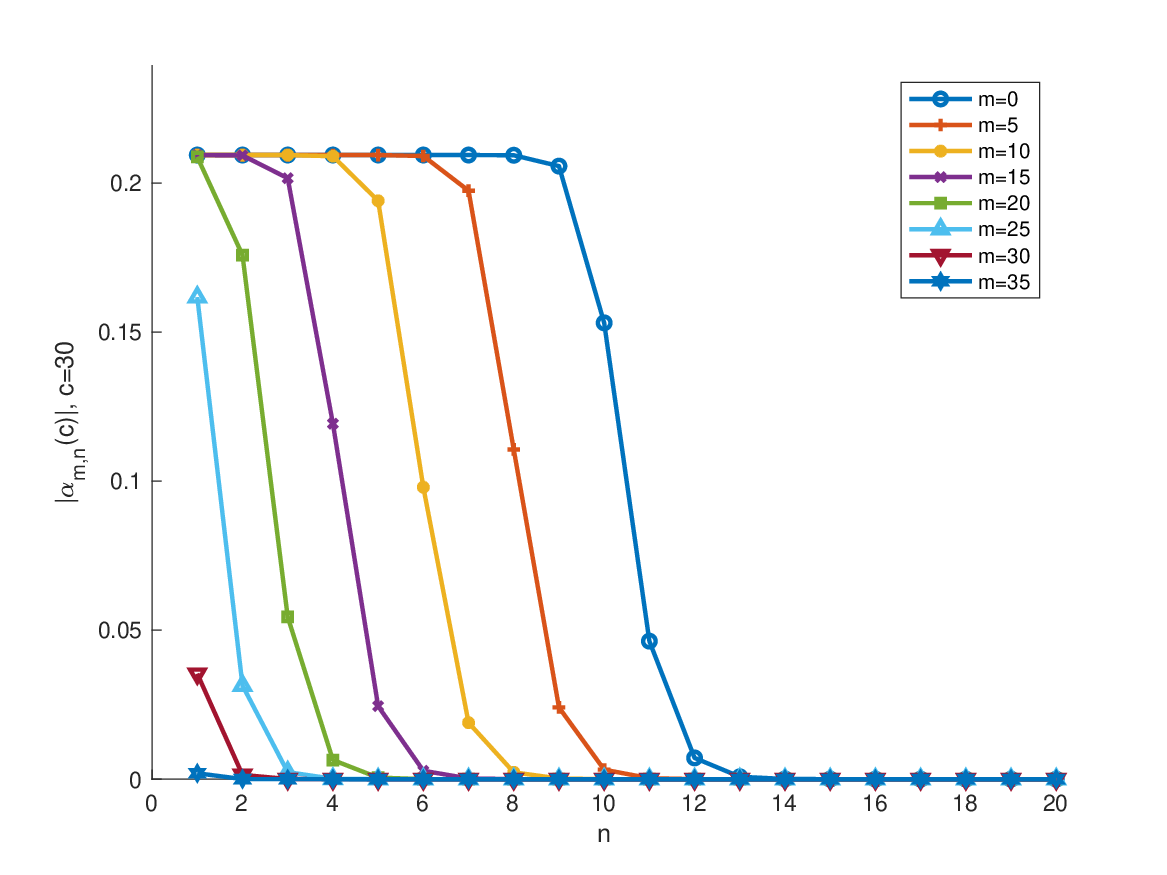} }
\subfloat[]{ \includegraphics[width=0.45\linewidth]{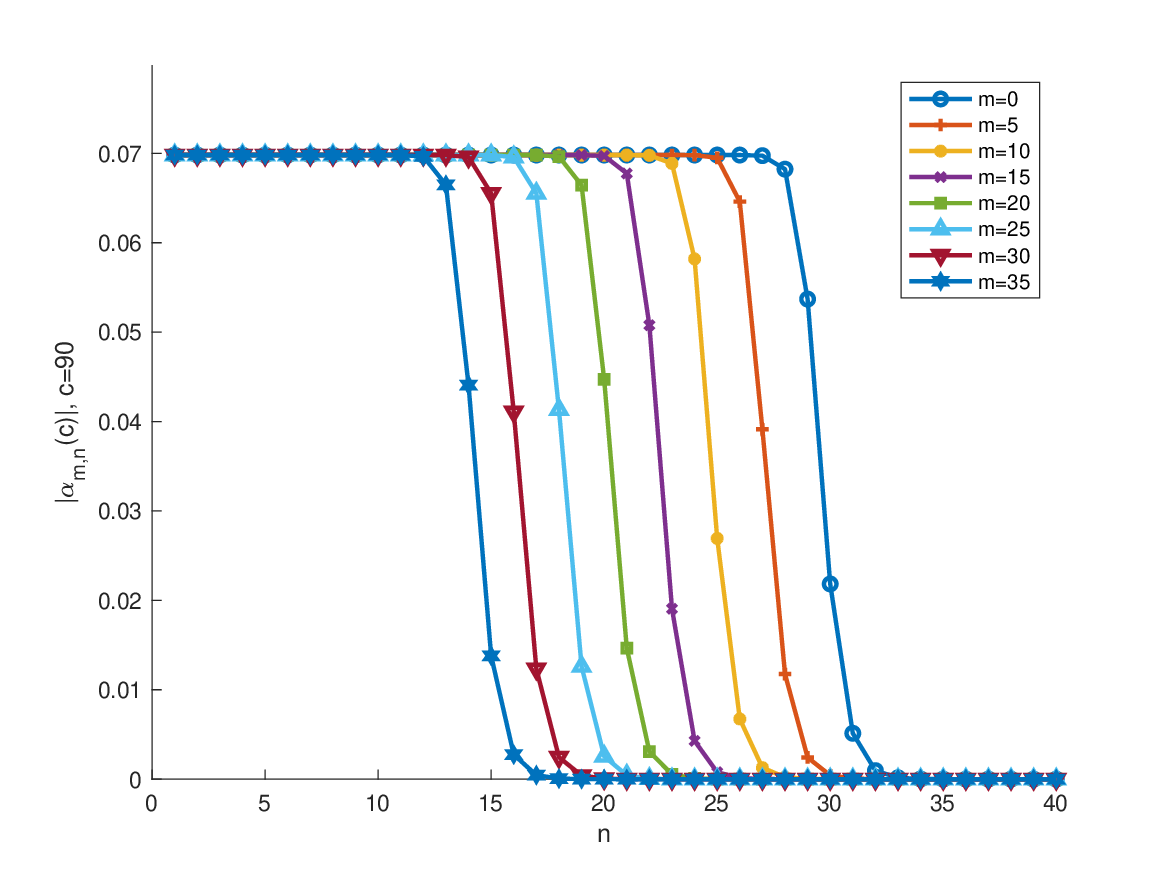} }\qquad

\caption{Distributions of the absolute value of eigenvalues $\alpha_{m,n}(c)$ for $m=0,~5,~10,~15,~20,~25,~30,~35$. (a)~$c=30$, (b) $c=90$.
}\label{figure: prolate eigs}
\end{figure}

The prolate eigenvalues $\alpha_{m,n}$ decay to zero exponentially fast and the dominant prolate eigenvalues are numerically  the same. %
See \Cref{figure: prolate eigs} for an illustration. Therefore we can choose the spectral cutoff regularization parameter according to the noises or perturbation errors. The strategy to choose the regularization parameter will be discussed in detail in the numerical experiments.
Recall that $J_{\epsilon}=\{ (m,n,\ell): { |\alpha_{m,n}| }> \epsilon \}$, we apply the spectral cutoff regularization to obtain that
\begin{align*}
    \tilde{q}^\epsilon (x)=\sum_{(m,n,\ell) \in J_{\epsilon}} q_{m,n,l} \psi_{m,n,l} (x;c).
\end{align*}
We summarize the low dimensional projection in \Cref{Algorithm: lrProj}.

{ 
In summary, given far-field data $\{u^{\infty}(\hat{x}_m;\hat{\theta}_\ell;k)\}_{m = 1,  ~\ell =1}^{N_1,N_2}$, we first apply \Cref{Algorithm: lrData} to obtain the matrix $U = (U_{ji})$ with $j=0, 1 \dots T-1$ and $i = 0, 1, \dots, M-1$, then we apply \Cref{Algorithm: lrProj} to compute the coefficients $\{u_{m,n,l}\}$ and $\{q_{m,n,l}\}$.  This allows us to reconstruct the { unknown} $q$. The algorithm is summarized in \Cref{Algorithm: lrInvScatt}.
}

For completeness we discuss the mathematical theory that supports our  method.
  In particular, suppose $u^\delta$ is a perturbation of $u$ in \eqref{inverse problem} such that $\|{   u^{\delta}} - {u }\|_{L^2(B)} \le \delta$. Let
 \begin{eqnarray*}
\quad q^{\delta,\alpha}
= \sum_{\chi_{m,n}(c)<\alpha^{-1}}   \frac{1}{ \alpha_{m,n}(c) } \left\langle {   u^{\delta}},  \psi_{m,n,\ell}(\cdot;c) \right\rangle_{B}  \psi_{m,n,\ell}(\cdot;c),
\end{eqnarray*}
and $\beta(\alpha)= \min_{ \chi_{m,n}(c)<\alpha^{-1}} \left\{   \left|\alpha_{m,n}(c)\right| \right\}$. Then the following follows from \cite{meng23data}.
\begin{lemma}
Let  $\|{   u^{\delta}} - {u }\|_{L^2(B)} \le \delta$.  If  $q \in  {H}^s(B)$ with $0<s<1/2$,  then
\begin{eqnarray*}
\|{q^{\delta,\alpha}} - q\|_{L^2(B)}  \le  { \frac{\delta}{{\beta(\alpha)}}} + { (4\alpha C) ^{s/2} (1+c^2)^{s/2}   \| q \|_{H^s\left(B\right)}},
\end{eqnarray*}
where $C\ge\sqrt{3}$ is a positive constant independent of $\delta$, $\alpha$, $s$ and $c$.
\end{lemma}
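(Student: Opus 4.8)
The plan is to decompose the error $\|q^{\delta,\alpha}-q\|_{L^2(B)}$ into a ``noise propagation'' term and a ``truncation'' (or bias) term via the triangle inequality. Write $q^{\alpha}$ for the same spectral cutoff applied to the exact data $u$, i.e.\ $q^{\alpha}=\sum_{\chi_{m,n}(c)<\alpha^{-1}} \alpha_{m,n}(c)^{-1}\langle u,\psi_{m,n,\ell}\rangle_{B}\,\psi_{m,n,\ell}$. Then $\|q^{\delta,\alpha}-q\|_{L^2(B)}\le \|q^{\delta,\alpha}-q^{\alpha}\|_{L^2(B)}+\|q^{\alpha}-q\|_{L^2(B)}$. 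First I would bound the noise term: since the $\psi_{m,n,\ell}$ are orthonormal (Lemma~\ref{lemma: SL}) and $q^{\delta,\alpha}-q^{\alpha}$ is the expansion of $u^\delta-u$ filtered over the index set $\{\chi_{m,n}(c)<\alpha^{-1}\}$ with each coefficient divided by $\alpha_{m,n}(c)$, Parseval gives $\|q^{\delta,\alpha}-q^{\alpha}\|_{L^2(B)}^2=\sum_{\chi_{m,n}(c)<\alpha^{-1}}|\alpha_{m,n}(c)|^{-2}\,|\langle u^\delta-u,\psi_{m,n,\ell}\rangle|^2\le \beta(\alpha)^{-2}\sum |\langle u^\delta-u,\psi_{m,n,\ell}\rangle|^2\le \beta(\alpha)^{-2}\delta^2$, which yields the first term $\delta/\beta(\alpha)$.

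The second, more delicate term is the truncation error. Because $q=\sum_{m,n,\ell} q_{m,n,\ell}\psi_{m,n,\ell}$ and, by \eqref{eigen_R_Fourier}, $q^{\alpha}$ simply retains exactly those coefficients with $\chi_{m,n}(c)<\alpha^{-1}$, orthonormality gives $\|q^{\alpha}-q\|_{L^2(B)}^2=\sum_{\chi_{m,n}(c)\ge\alpha^{-1}} |q_{m,n,\ell}|^2$. On this tail I would insert the factor $\chi_{m,n}(c)/\chi_{m,n}(c)$ and use $\chi_{m,n}(c)\ge \alpha^{-1}$ to get $\|q^{\alpha}-q\|_{L^2(B)}^2\le \alpha \sum_{\chi_{m,n}(c)\ge\alpha^{-1}} \chi_{m,n}(c)\,|q_{m,n,\ell}|^2\le \alpha\sum_{m,n,\ell}\chi_{m,n}(c)\,|q_{m,n,\ell}|^2$. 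The remaining task is to identify $\sum_{m,n,\ell}\chi_{m,n}(c)\,|q_{m,n,\ell}|^2$ with (a multiple of) an $H^s$-type norm of $q$. The key fact — which must come from \cite{meng23data} and ultimately from the Sturm--Liouville characterization \eqref{sturm-liouvill} — is a two-sided comparison of the eigenvalues $\chi_{m,n}(c)$ with the eigenvalues of a pure (weighted) Laplacian-type operator, of the form $\chi_{m,n}(c)\le C(1+c^2)(1+\text{(degree)})$ together with a matching lower bound; this lets one sandwich $\sum \chi_{m,n}(c)|q_{m,n,\ell}|^2$ between constants times $\|q\|_{H^1(B)}^2$-type quantities, and then interpolate down to $H^s$ for $0<s<1/2$ (the restriction $s<1/2$ presumably being where the relevant trace/extension or interpolation inequality on the disk is clean). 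Raising the resulting bound to the $s$ power (applied to the operator inequality before summing, i.e.\ bounding $\chi_{m,n}(c)^s$ rather than $\chi_{m,n}(c)$, which is the honest way to land on $\|q\|_{H^s}$ and explains the exponents $\alpha^{s/2}(1+c^2)^{s/2}$) produces the claimed $(4\alpha C)^{s/2}(1+c^2)^{s/2}\|q\|_{H^s(B)}$.

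I expect the main obstacle to be exactly this last step: making precise the spectral equivalence between the Sturm--Liouville eigenvalues $\chi_{m,n}(c)$ and the $H^s(B)$ norm, including tracking the explicit dependence on $c$ (so that the $(1+c^2)^{s/2}$ factor is correct and uniform) and the constant $C\ge\sqrt{3}$, and justifying the interpolation for fractional $s$ on the disk. Since the statement explicitly attributes the result to \cite{meng23data}, I would quote the relevant eigenvalue comparison and the fractional-order characterization from there as a black box, and the remaining work is the elementary filtering/Parseval bookkeeping described above together with applying the map $t\mapsto t^{s}$ monotonically and Hölder's inequality (or simply the pointwise bound $\min(1,\alpha\chi)^{}\le (\alpha\chi)^s$) to convert the $\chi\ge\alpha^{-1}$ tail estimate into the stated $s$-dependent bound.
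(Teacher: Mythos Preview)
The paper does not actually prove this lemma: immediately before the statement it says ``Then the following follows from \cite{meng23data}'' and no argument is given. So there is no paper proof to compare against; the result is quoted from the cited reference.

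Your sketch is a reasonable reconstruction of how such a proof should go: the noise/truncation split via the triangle inequality and the Parseval bound for the $\delta/\beta(\alpha)$ term are correct and standard, and you correctly identify that the substantive content --- the equivalence between $\sum \chi_{m,n}(c)^{s}|q_{m,n,\ell}|^{2}$ and $\|q\|_{H^s(B)}^{2}$ with the explicit $(1+c^2)$ dependence and the constant $C\ge\sqrt{3}$ --- must be imported from \cite{meng23data}. Since the paper treats the whole lemma as a citation, invoking that reference as a black box for the spectral comparison (and hence for the entire second term) is exactly what the paper does too.
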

The first error term $ { \frac{\delta}{{\beta(\alpha)}}}$ is primarily due to the perturbation noises, and the second error term ${ (4\alpha C) ^{s/2} (1+c^2)^{s/2}   \| q \|_{H^s\left(B\right)}}$ is due to truncation error of finite prolate series expansion of $q$. In particular, if $q$ belongs to the span of finitely many disk PSWFs, then  the stability estimate is of Lipschitz. {  We can prove the following.
\begin{theorem}
Let  $\|{   u^{\delta}} - {u }\|_{L^2(B)} \le \delta$.  If  $q \in \mbox{span}\{ \psi_{m,n,\ell}(\cdot;c): { |\alpha_{m,n}|} > \eta\}$, { let
$$
q^\eta = \sum_{|\alpha_{m,n}|>\eta}   \frac{1}{ \alpha_{m,n}(c) } \left\langle {   u^{\delta}},  \psi_{m,n,\ell}(\cdot;c) \right\rangle_{B}  \psi_{m,n,\ell}(\cdot;c),
$$
}
then
\begin{eqnarray*}
\|{  q^\eta} - q\|_{L^2(B)}  \le  { \frac{\delta}{\eta}}.
\end{eqnarray*}
\end{theorem}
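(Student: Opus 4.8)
The plan is to exploit the fact that for $q$ in the finite-dimensional span $V_\eta := \mathrm{span}\{\psi_{m,n,\ell}(\cdot;c): |\alpha_{m,n}|>\eta\}$, the spectral cutoff reconstruction $q^\eta$ is an exact inversion on that subspace, so the only error comes from the data perturbation $u^\delta - u$, and the propagation of that error is controlled by the smallest retained prolate eigenvalue, which is bounded below by $\eta$. First I would write $q = \sum_{|\alpha_{m,n}|>\eta} q_{m,n,\ell}\,\psi_{m,n,\ell}(\cdot;c)$, using orthonormality from \Cref{lemma: SL}; the hypothesis on $q$ guarantees that the sum over the complementary indices vanishes. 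Applying the restricted Fourier operator and using \eqref{eigen_R_Fourier} gives $u(\cdot;c) = \sum_{|\alpha_{m,n}|>\eta} \alpha_{m,n}(c)\, q_{m,n,\ell}\,\psi_{m,n,\ell}(\cdot;c)$, hence $u_{m,n,\ell} = \langle u(\cdot;c),\psi_{m,n,\ell}(\cdot;c)\rangle = \alpha_{m,n}(c)\,q_{m,n,\ell}$ for each retained index.

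Next I would subtract: for every $(m,n,\ell)$ with $|\alpha_{m,n}|>\eta$,
\begin{equation*}
\frac{1}{\alpha_{m,n}(c)}\langle u^\delta(\cdot;c),\psi_{m,n,\ell}(\cdot;c)\rangle - q_{m,n,\ell} = \frac{1}{\alpha_{m,n}(c)}\langle u^\delta - u,\psi_{m,n,\ell}(\cdot;c)\rangle .
\end{equation*}
Then, by Parseval's identity on $L^2(B)$ (again \Cref{lemma: SL}),
\begin{equation*}
\|q^\eta - q\|_{L^2(B)}^2 = \sum_{|\alpha_{m,n}|>\eta} \frac{1}{|\alpha_{m,n}(c)|^2}\,\bigl|\langle u^\delta - u,\psi_{m,n,\ell}(\cdot;c)\rangle\bigr|^2 \le \frac{1}{\eta^2}\sum_{|\alpha_{m,n}|>\eta}\bigl|\langle u^\delta - u,\psi_{m,n,\ell}(\cdot;c)\rangle\bigr|^2 .
\end{equation*}
Bessel's inequality bounds the remaining sum by $\|u^\delta - u\|_{L^2(B)}^2 \le \delta^2$, which yields $\|q^\eta - q\|_{L^2(B)}\le \delta/\eta$ after taking square roots.

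There is essentially no hard step here — the argument is a direct consequence of the dual eigenfunction property and orthonormality. The only point requiring a little care is ensuring the index set over which $q^\eta$ is summed coincides exactly with the support of the expansion of $q$; this is why the statement fixes $q \in V_\eta$ and defines $q^\eta$ with the matching cutoff $|\alpha_{m,n}|>\eta$, so that no bias term (of the kind $(4\alpha C)^{s/2}(1+c^2)^{s/2}\|q\|_{H^s(B)}$ in the preceding lemma) appears. One could alternatively deduce the result directly from that lemma by letting $s\to 0$ and choosing $\alpha$ so that $\{\chi_{m,n}(c)<\alpha^{-1}\}$ matches $\{|\alpha_{m,n}|>\eta\}$ — this is possible because, for fixed $c$, larger $\chi_{m,n}$ corresponds to smaller $|\alpha_{m,n}|$ — but the self-contained computation above is cleaner and I would present that.
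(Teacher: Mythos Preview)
Your proposal is correct and follows essentially the same approach as the paper's proof; the paper simply compresses the Parseval/Bessel computation you wrote out into a single line, bounding $\|q^\eta - q\|_{L^2(B)}$ by $\|u^\delta - u\|_{L^2(B)}$ divided by $\min\{|\alpha_{m,n}|:|\alpha_{m,n}|>\eta\}$, which is strictly larger than $\eta$.
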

\begin{proof}
Due to the assumption that $q \in \mbox{span}\{ \psi_{m,n,\ell}(\cdot;c): { |\alpha_{m,n}| > \eta} \}$, then
\begin{eqnarray*}
\|{  q^\eta} - q\|_{L^2(B)}  \le \frac{1}{\mbox{min}\{ { |\alpha_{m,n}|: |\alpha_{m,n}| > \eta }\}} \|{u^{\delta}} - u\|_{L^2(B)} <  { \frac{\delta}{\eta}} .
\end{eqnarray*}
This completes the proof.
\end{proof}
}%

\begin{algorithm}
	\caption{Low-Rank Inverse Scattering}
\begin{algorithmic}[1]\label{Algorithm: lrInvScatt}
\Require   Far-field data $\{u^{\infty}(\hat{x}_m;\hat{\theta}_\ell;k): ~m = 1,2,\dots, N_1, ~\ell = 1,2,\dots, N_2\}$, spectral cutoff parameter $\epsilon$
\Ensure  Reconstruction $\tilde{q}^\epsilon$ of the unknown $q$
\State Precomputing: set $c=2k$, apply \Cref{Algorithm: PSWFs} to evaluate $\psi_{m,n,l}(x;c)$ and prolate eigenvalue $\alpha_{m,n}(c)$.
\State {  Data processing}: apply  \Cref{Algorithm: lrData} to obtain the matrix $U$ given by \eqref{DataMatrix}.
\State Low dimensional projection: apply Algorithm \ref{Algorithm: lrProj} to obtain the low dimensional vector $\{q_{m,n,\ell}\}_{(m,n,\ell) \in J_\epsilon}$ with $J_{\epsilon}:=\{ (m,n,\ell): { |\alpha_{m,n}(c)| }> \epsilon \}$.
\State Evaluate $\tilde{q}^\epsilon(x)=\sum_{(m,n,\ell) \in J_\epsilon} q_{m,n,l} \psi_{m,n,l} (x;c)$.
\end{algorithmic}
\end{algorithm}

\section{Numerical experiments} \label{section: numerical experiments}
In this section, we perform numerical experiments for both the Born far-field data and the full far-field data. We aim to test resolution capability, robustness against randomly added noise and modeling errors, and numerical evidence of increasing stability. Furthermore, we compare its performance against standard fft and iterative methods.

\subsection{Born model}
We begin with the Born far-field data. Recall from the Born model \eqref{Born fourier} that
\begin{align*}
    u_b^\infty (\hat{x};\hat{\theta};k)&=k^2\int_\Omega e^{-ik\hat{x}\cdot y}q(y) e^{iky\cdot \hat{\theta}}dy.
\end{align*}
One finds that
$$
  u_b^\infty (\hat{x};\hat{\theta};k) = k^2 \int_B e^{ic \frac{\hat{\theta} - \hat{x}}{2}  \cdot y}q(y)  \ind y,
$$
and we set
\begin{equation} \label{section: numerical : data generation}
u(p;c)  = \int_B e^{ic p  \cdot y}q(y)  \ind y, \quad p= \frac{\hat{\theta} - \hat{x}}{2}, \quad c = 2k.
\end{equation}

\subsubsection{Data generation}
 In this section, we discuss the Born data generated in our algorithm.
We generate the far-field data by evaluating the integral \eqref{section: numerical : data generation} analytically {  with the following explict formulas}.
\begin{itemize}
    \item Eigenfunction. Let $q(x)=\psi_{m,n,l}(x)$ be one of the eigenfunctions, then $u(x;c)= \alpha_{m,n}(c) \psi_{m,n,l}(x)$.

\item Constant $q$ in a disk/rectangle.
\begin{itemize}
    \item Constant in a disk: $q(x)=1_{|x| < a}(x)$. This leads to
    $$
    u(x;c)=\frac{2\pi a}{c|x|}J_1(ac|x|),
    $$ where $J_1$ is the Bessel function of the first kind of order  $\nu=1$, see for instance \cite{Abramowitz64}.

\item Constant in a rectangle: $q(x)=1_{x\in \Omega}(x)$, $\Omega=\{x\in \mathbb{R}^2:  |x_1|,|x_2| < 1/2\}$. This leads to
\begin{equation*}
    u(x;c)=\frac{4\sin{(\frac{c}{2}x_1)}\sin{(\frac{c}{2}x_2)}}{c^2x_1 x_2}.
\end{equation*}
\end{itemize}

\item  Oscillatory contrast: $
    q(x_1,x_2)= \sin m\pi x_1 \cdot 1_\Omega(x)$, $\Omega=\{x\in \mathbb{R}^2:  |x_1|,|x_2| < 1/2\}$, $m\in \mathbb{Z}$.
This leads to
\begin{equation*}
    u(x;c)= (-i)\frac{2 \sin \frac{c x_2}{2}}{c x_2}\Big(\frac{\sin{\frac{cx_1+m\pi}{2}}}{cx_1+m\pi}-\frac{\sin{\frac{cx_1-m\pi}{2}}}{cx_1-m\pi} \Big).
\end{equation*}

\item  Constant in three  rectangles: $
\cup_{j=1}^3 \Omega_j$ where each $\Omega_j$ is a rectangle.

We give the formula for generating the data for a fixed rectangle, then for three different rectangles, it is sufficient to apply the superposition.

For a rectangular region $\Omega=\{(x,y)\in \mathbb{R}^2|a_1 < x_1 < a_2, b_1 < x_2 < b_2 \}$ that supports constant $q=1$, we can compute
\begin{equation*}
    u(x;c)=4\frac{\sin(\frac{a_2-a_1}{2}cx_1)\sin(\frac{b_2-b_1}{2}cx_2)}{c^2 x_1 x_2}e^{i (\frac{a_2+a_1}{2}c x_1+\frac{b_1+b_2}{2}c x_2)}.
\end{equation*}

\end{itemize}

In the following, we test our algorithm step by step:  first with the exact post-processed data and then with the far-field data.

\subsubsection{Noiseless and noisy post-processed data}

In this section, we first work with the exact post-processed data. This is to implement the \Cref{Algorithm: lrInvScatt} by skipping the data processing \Cref{Algorithm: lrData}. To be more precise, we consider the following discrete data $\{\tilde{u}(t_j,\theta_i)|0\leq j\leq T-1,0\leq i\leq M-1\}$ in the unit disk, i.e.,
\begin{equation*}
    U =
    \begin{pmatrix}
        \tilde{u}(t_0,\theta_0;c) & \tilde{u}(t_0,\theta_1;c) & \cdots & \tilde{u}(t_0,\theta_{M-1};c) \\
        \tilde{u}(t_1,\theta_0;c) & \tilde{u}(t_1,\theta_1;c) & \cdots & \tilde{u}(t_1,\theta_{M-1};c) \\
                   \vdots & \vdots & \ddots & \vdots\\
        \tilde{u}(t_{T-1},\theta_0;c) & \tilde{u}(t_{T-1},\theta_1;c) & \cdots & \tilde{u}(t_{T-1},\theta_{M-1};c)
    \end{pmatrix}.
\end{equation*}
where $\{t_j,\omega_{t_j}\}_{j=0}^{T-1}$ are the  Gauss-Legendre quadrature nodes and weights,  $\{\theta_i=\frac{2i\pi}{M},\omega_{\theta_i}=\frac{2\pi}{M}\}_{i=0}^{M-1}$ are the trapezoidal quadrature nodes and weights, and
$$
\tilde{u}(t_j,\theta_i;c) = u(p_n;c),  \quad \{p_n\}_{n=1}^{TM}= \big\{\sqrt{(t_j+1)/2} (\cos\theta_i,\sin \theta_i)^T \big\}_{j=0,~i=0}^{T-1,~M-1}.
$$
Here $u(x;c)$ is given by evaluating \eqref{section: numerical : data generation} analytically.
In \Cref{Algorithm: PSWFs} we set $K = 146$ in \eqref{section: computing disk PSWFs: polynomial expansion} which is sufficient for all of our experiments.

We add random uniformly distributed noise of noise level $\delta$ to each entry of $U$ to get the noisy matrix $U^\delta$, i.e.,
{
\begin{equation}\label{add noise}
\tilde{u}^\delta(t_j,\theta_i)= \tilde{u}(t_j,\theta_i) \left(1+ \delta \xi_{ji} \right), \, 0\leq j\leq T-1,0\leq i\leq M-1
\end{equation}
where { $\xi_{ji}\sim U(-1,1)$, indicating that $\xi_{ji}$  is a uniformly distributed random number between $-1$ and $1$}.
}

It is observed that the prolate eigenvalues $\alpha_{m,n}$ decay to zero exponentially fast and the dominant prolate eigenvalues have {  the same numerical magnitude}. See \Cref{figure: prolate eigs} for an illustration. We {  choose} to implement \Cref{Algorithm: lrInvScatt} by choosing a spectral cutoff $\epsilon$ as
{  
\begin{equation*}
    \epsilon=\begin{cases}
    0.1|\alpha_{0,0}(c)|,~\delta= 0, &\qquad \mbox{(Case: Born data or analytic data)}\\\delta|\alpha_{0,0}(c)|,~~~\delta\neq 0, &\qquad \mbox{(Case: Born data or analytic data)}\\0.9|\alpha_{0,0}(c)|,
    &\qquad \mbox{(Case: full simulation  data)}
\end{cases},
\end{equation*}
}
where $\delta$ is the noise level. %

First we test the algorithm for $q$ being one of the disk PSWFs and we gradually perform more complicated examples. In \Cref{figure: psi} we plot the ground-truth of $q$ (left) and reconstruction of $q$ (right). The reconstruction is under our expectation.
\begin{figure}[htbp]
\centering
\includegraphics[width=0.45\linewidth]{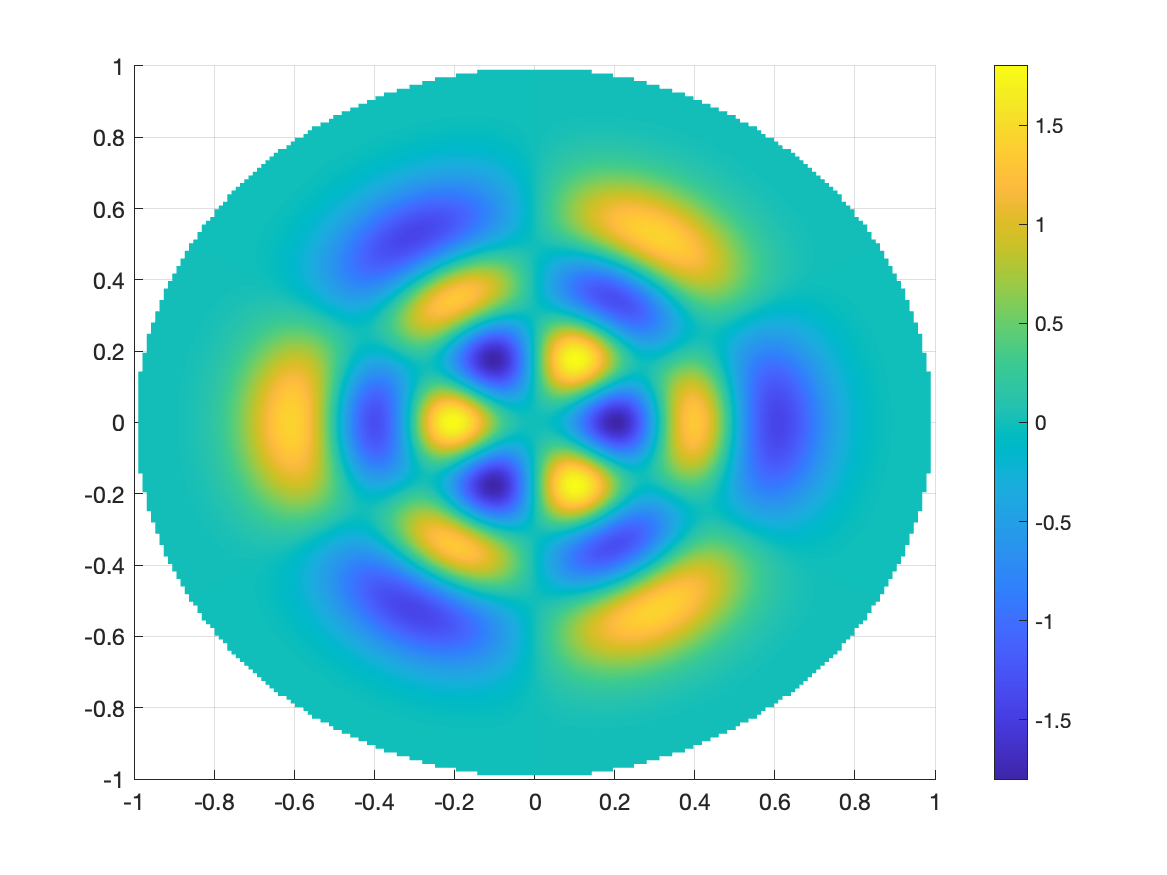}
\includegraphics[width=0.45\linewidth]{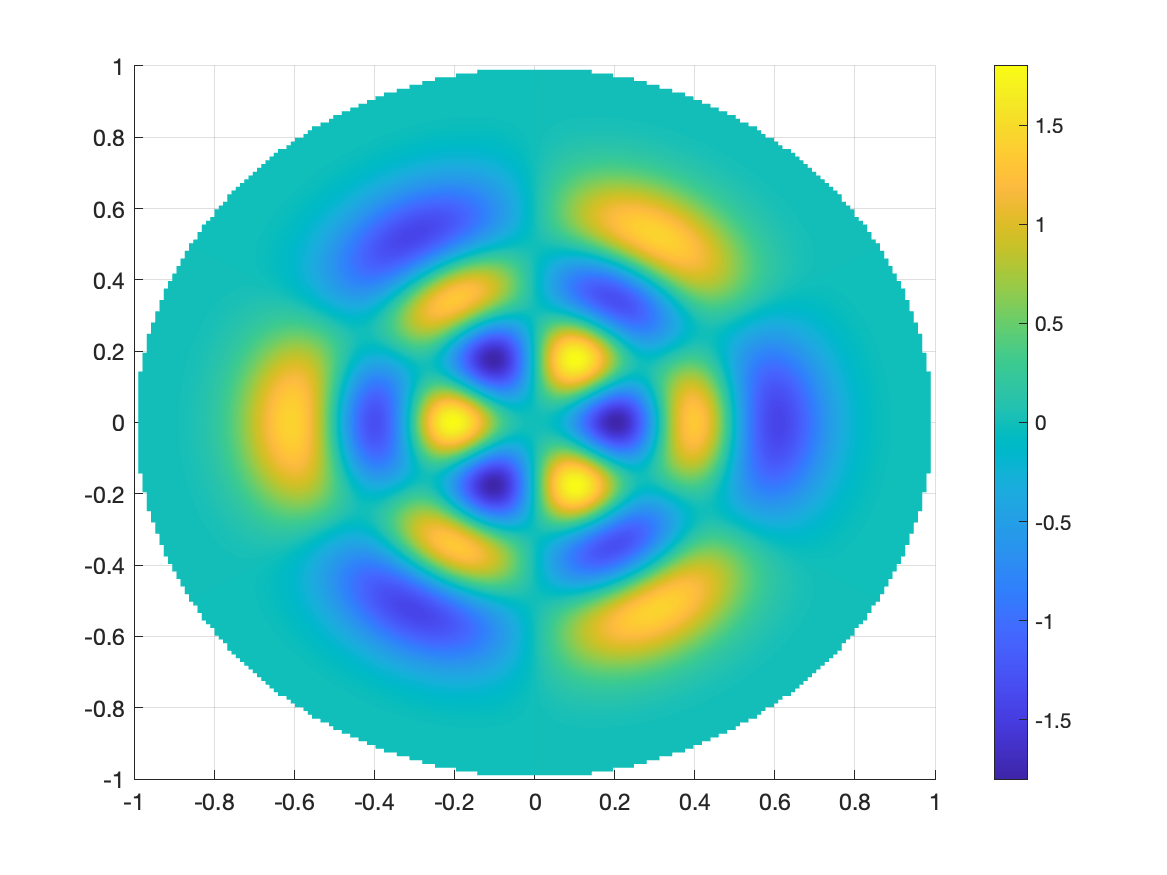}
\caption{Reconstruction of a disk PSWF. Left: $\psi_{3,2,2}(x;30)$, right: reconstruction. } \label{figure: psi}
\end{figure}
We further test the performance of our algorithm for the three different types of unknowns: constant $q$ in a disk, constant $q$ in a rectangle, and oscillatory function $q$. Here the disk is given by
$$
\{(x_1, x_2)\in\mathbb{R}^2: \sqrt{x_1^2+x_2^2} < 1/2\},
$$
the rectangle is given by
$$
\{(x_1, x_2)\in\mathbb{R}^2: -1/2< x_1, x_2 < 1/2\},
$$
and the oscillatory function is given by
$$
\sin(8\pi x_1) \mbox{ supported in } \{(x_1, x_2)\in\mathbb{R}^2: -1/2< x_1, x_2 < 1/2\}.
$$
Numerical experiments in  \Cref{example: U noiseless noise} demonstrate the capability of our algorithm.

\begin{figure}[htbp]
\centering

\includegraphics[width=0.3\linewidth]{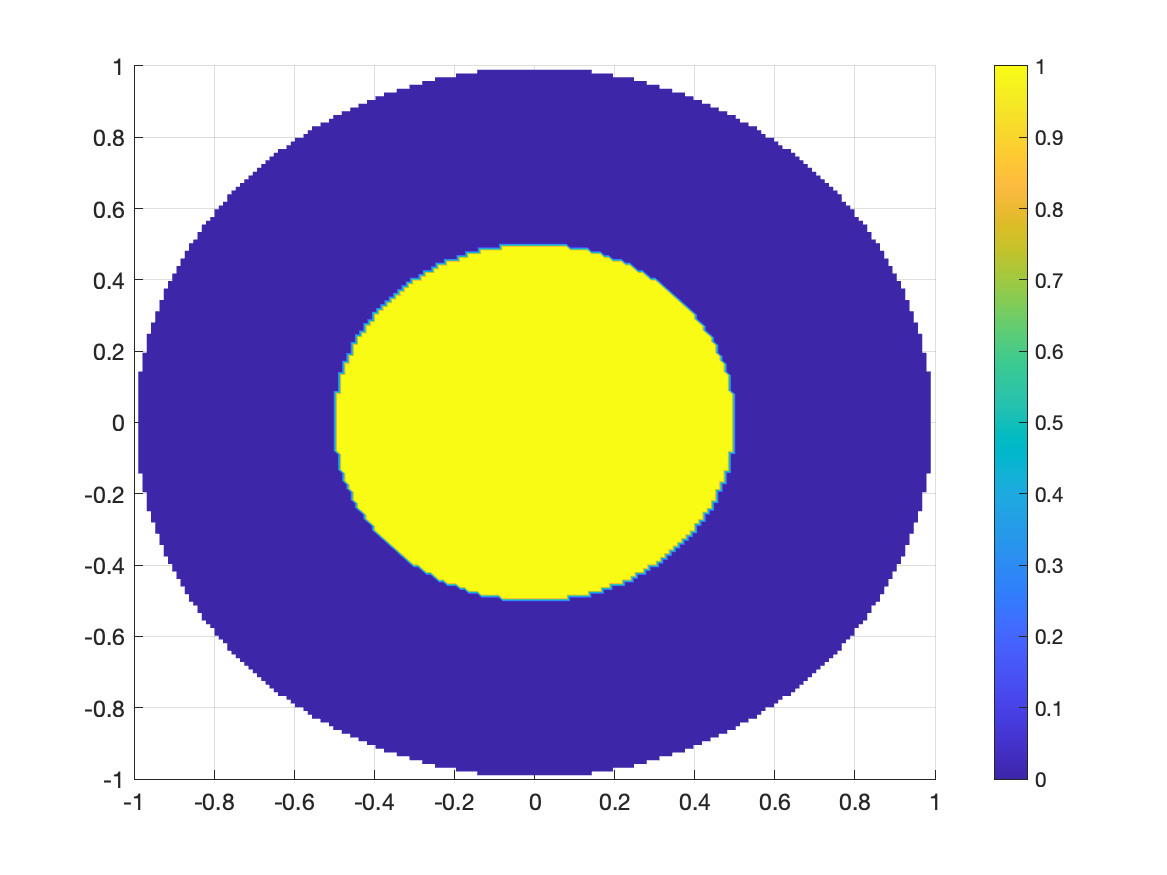}
\includegraphics[width=0.3\linewidth]{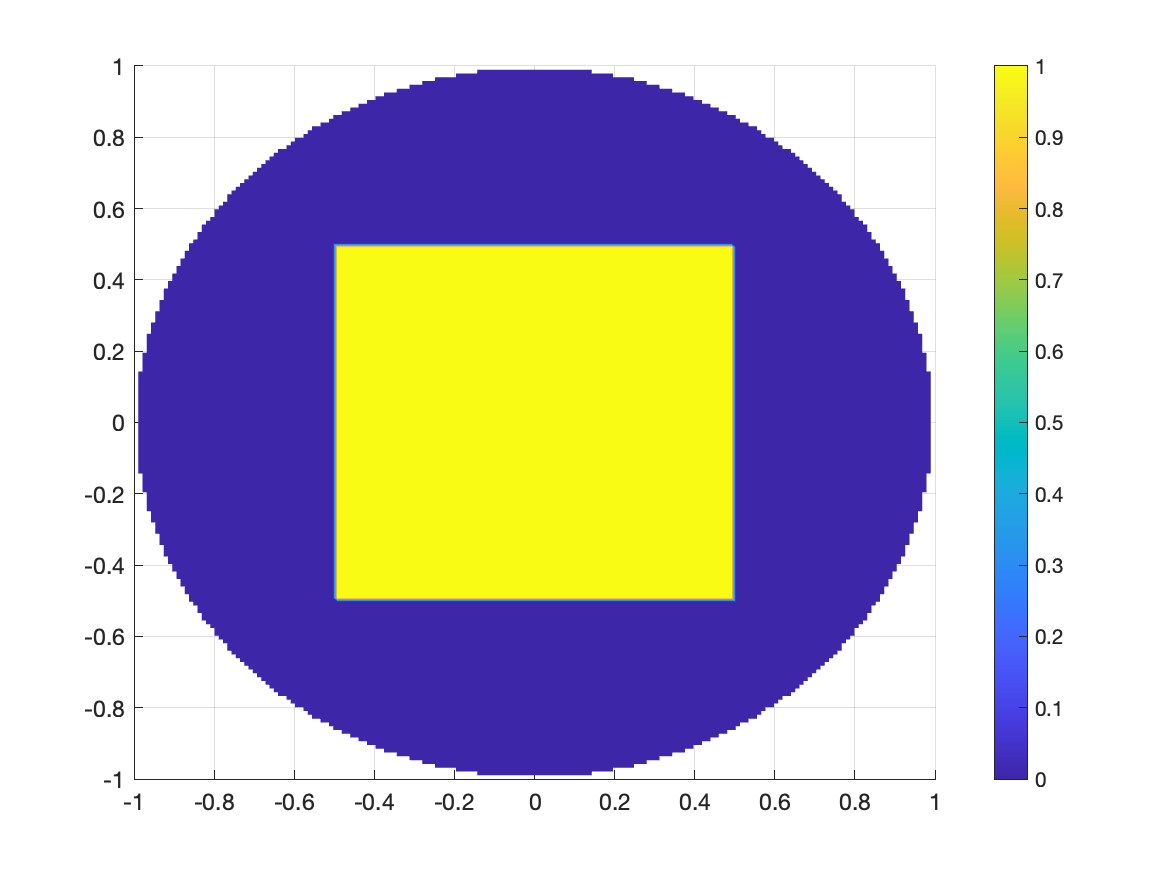}
\includegraphics[width=0.3\linewidth]{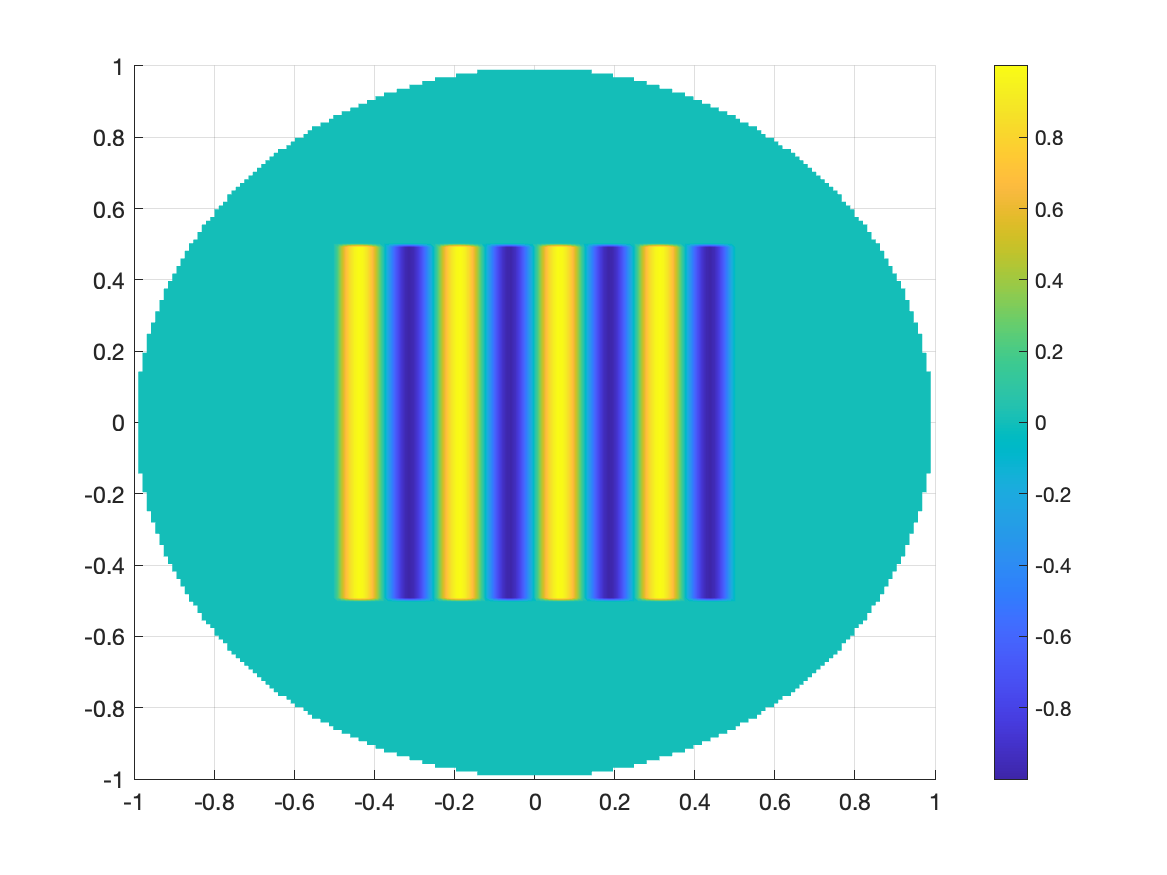}
\\
\includegraphics[width=0.3\linewidth]{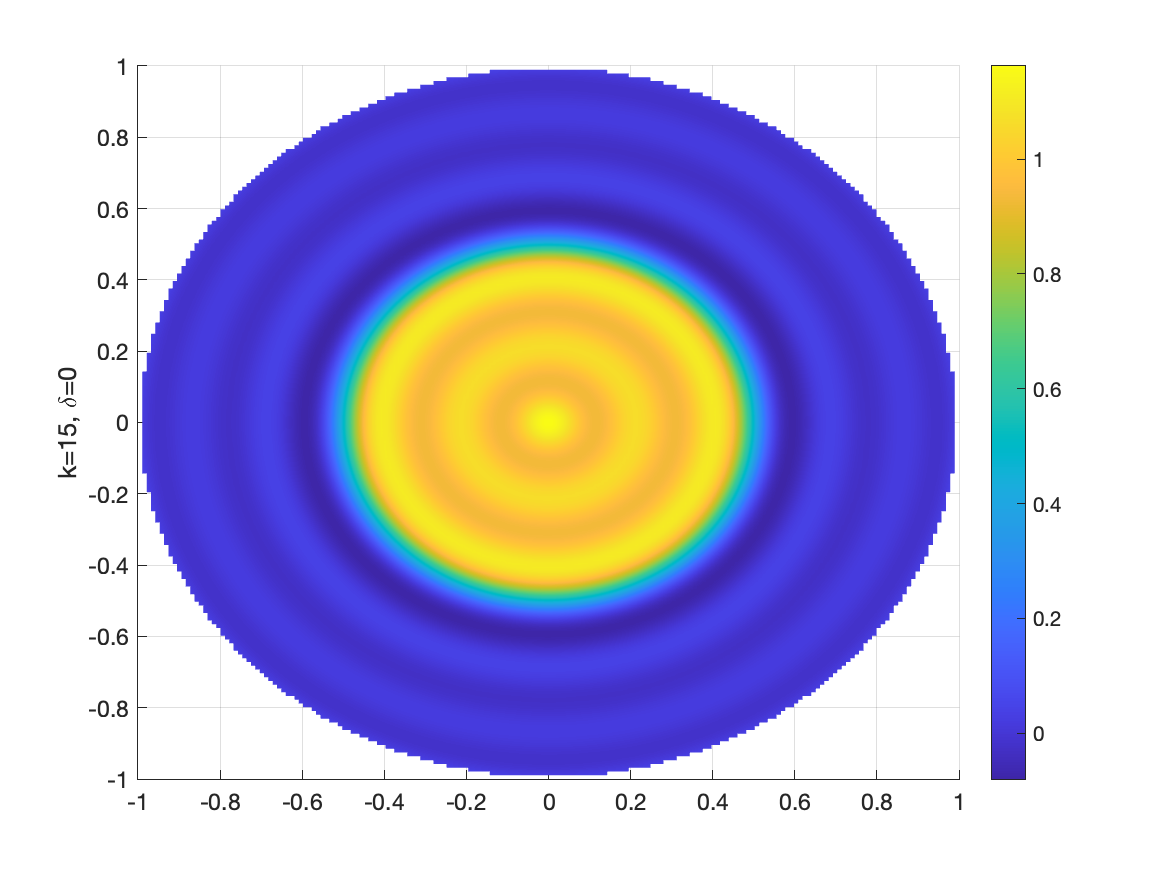}
\includegraphics[width=0.3\linewidth]{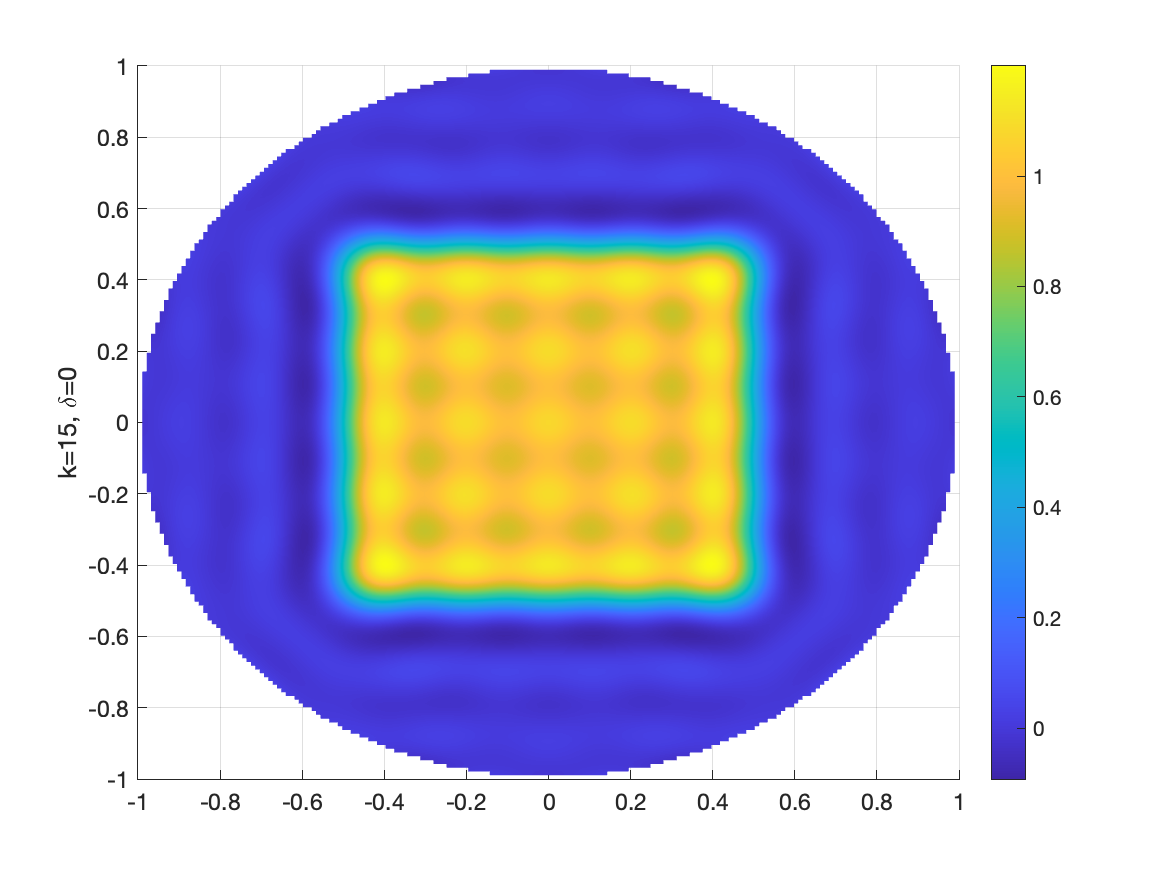}
\includegraphics[width=0.3\linewidth]{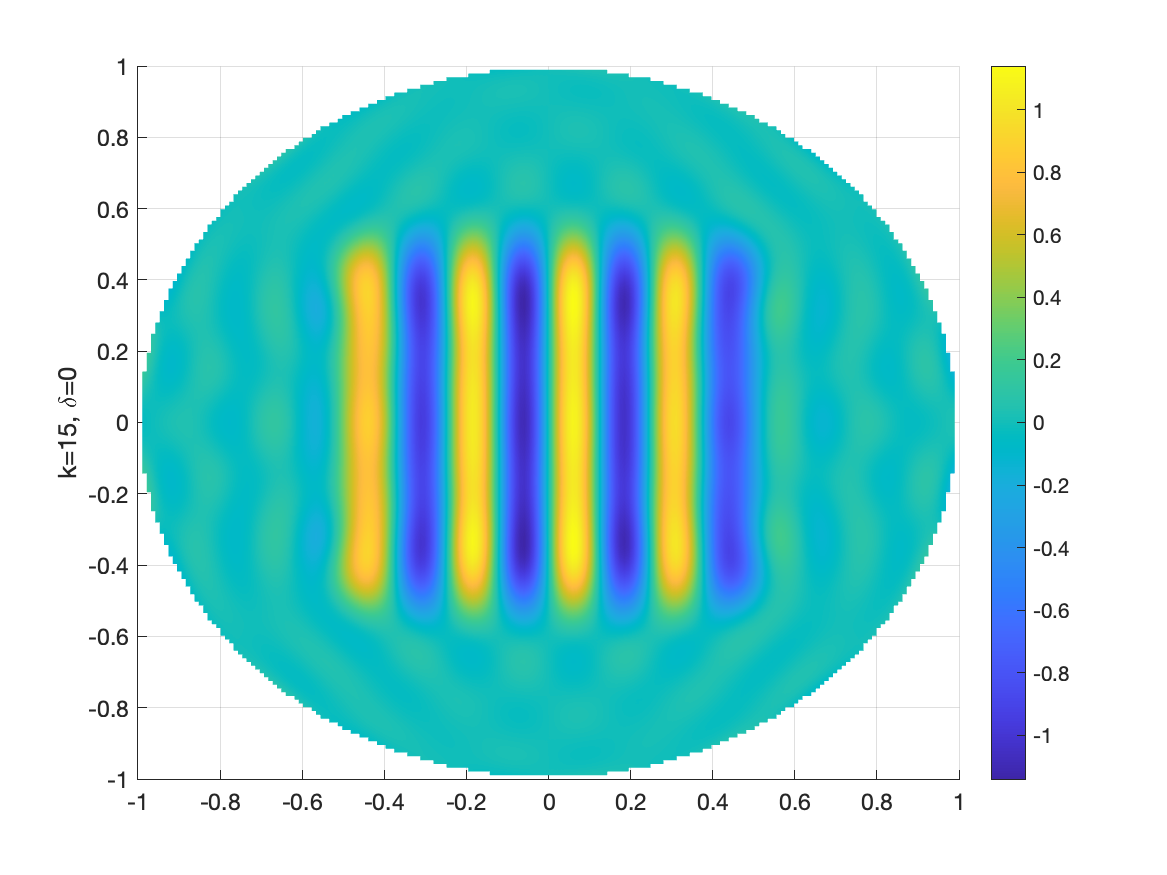}
\\
\includegraphics[width=0.3\linewidth]{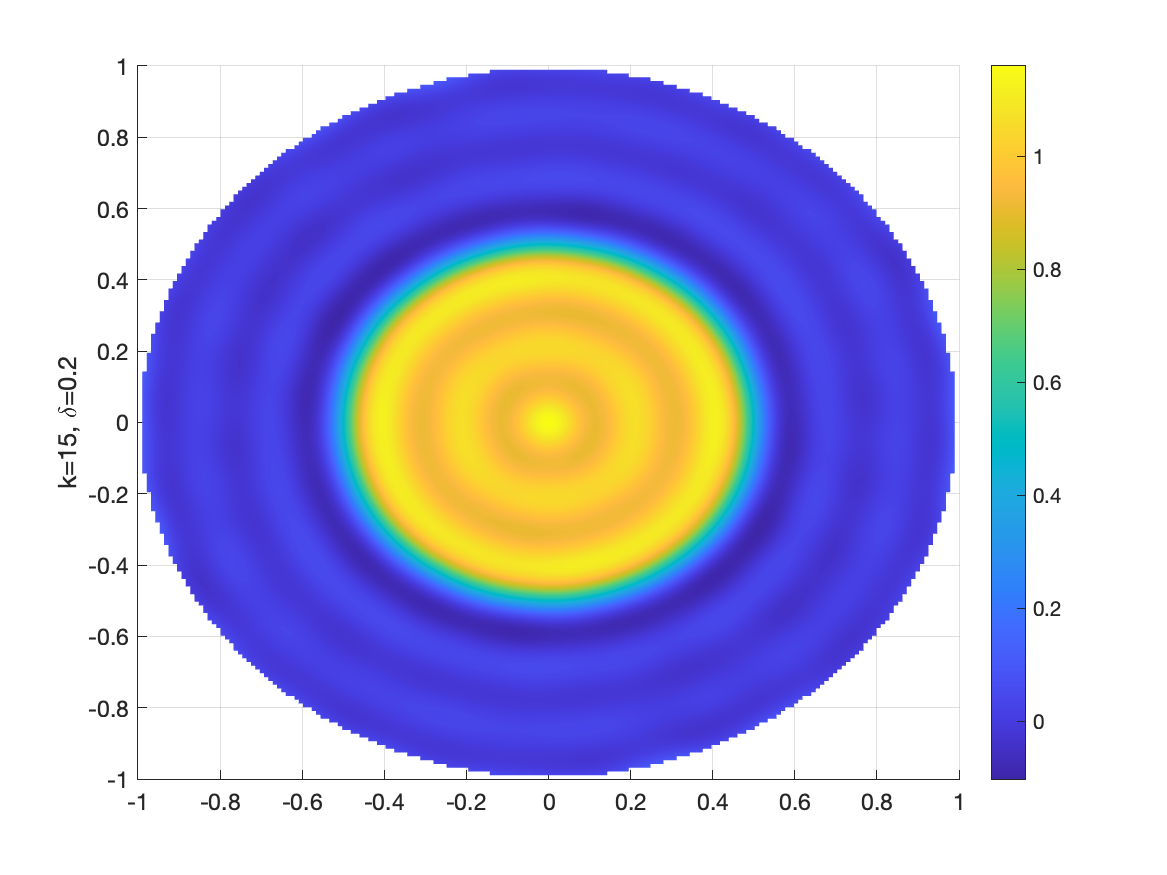}
\includegraphics[width=0.3\linewidth]{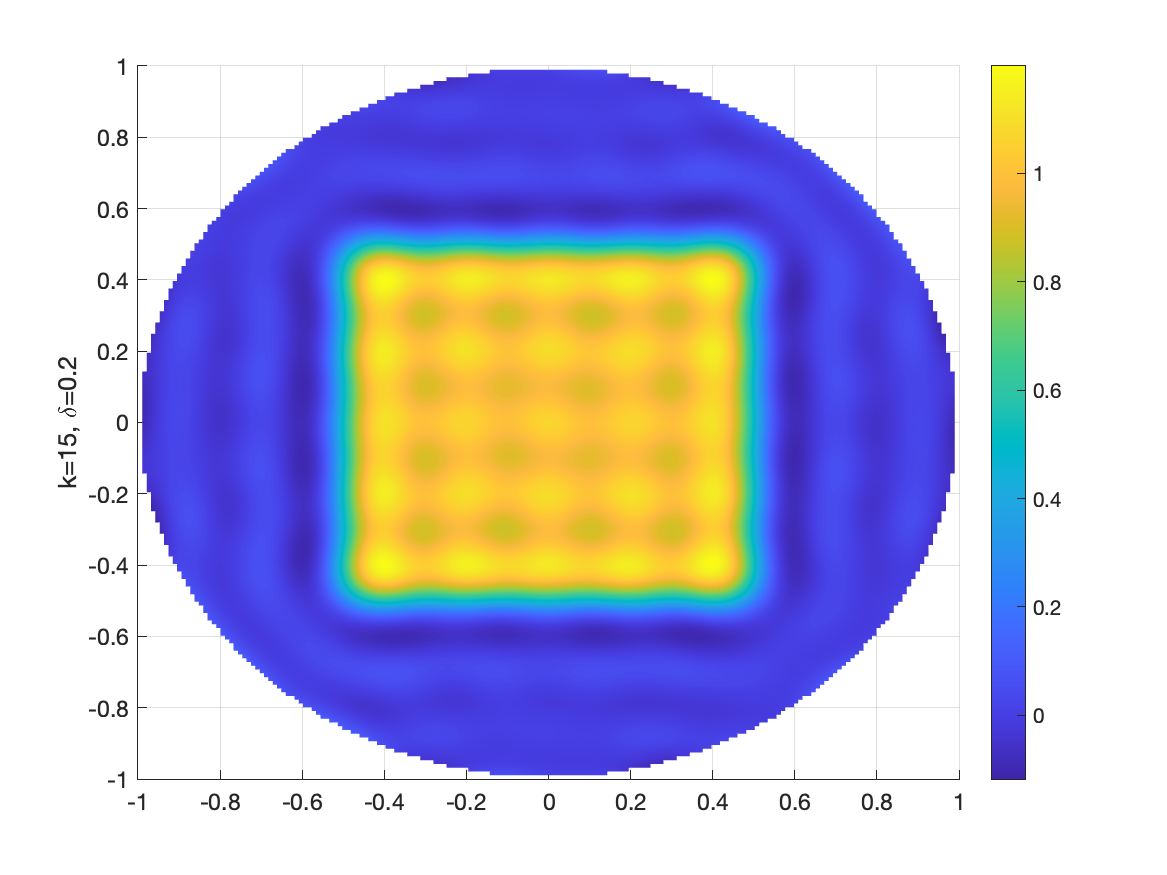}
\includegraphics[width=0.3\linewidth]{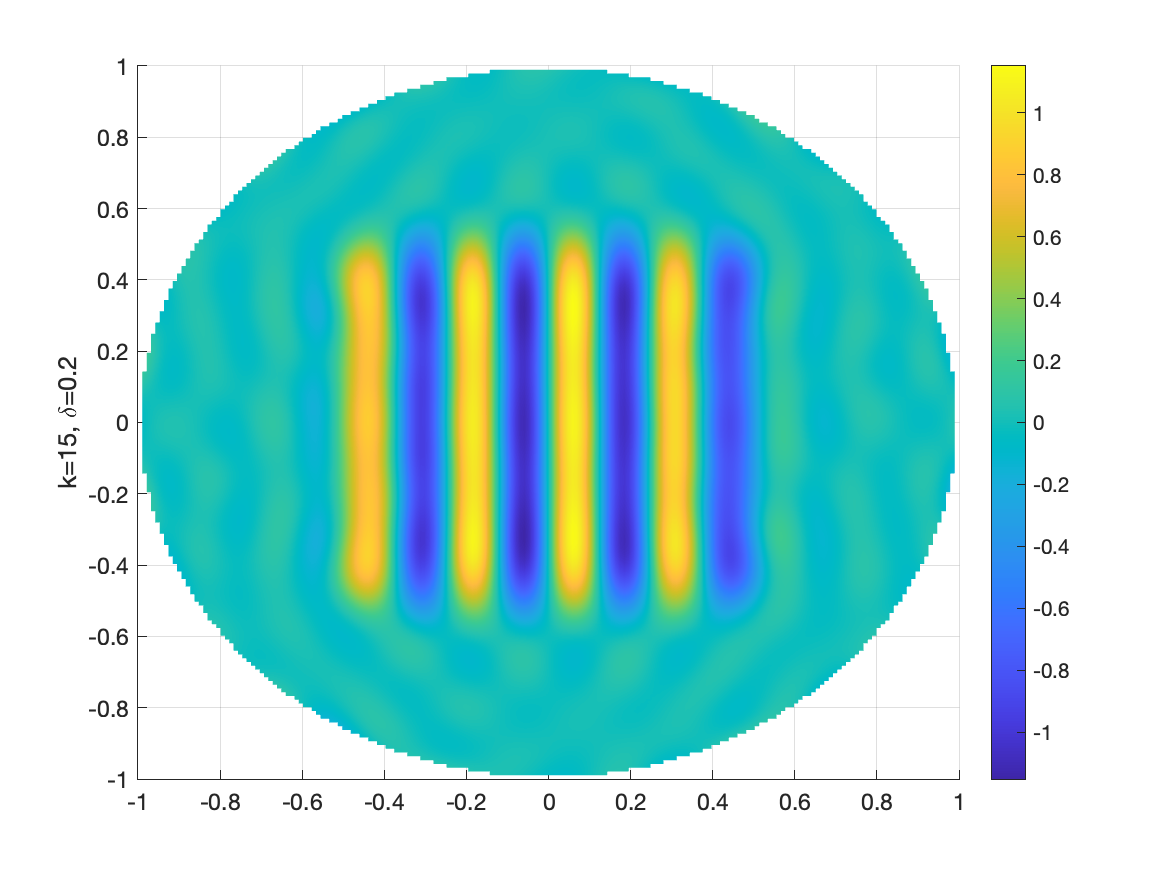}

  \caption{Reconstruction of three different types of unknowns.  First row: exact. Second row:  Noiseless data. Third row: $20\%$ noisy data.}  \label{example: U noiseless noise}

\end{figure}

Next, we consider perturbed noisy data given by \eqref{add noise} where we add   $\delta=20\%$   to the exact data $U$. The reconstruction is plotted in the bottom row of \Cref{example: U noiseless noise} which shows that the algorithm is robust to the added uniform noises. We report that we also tried Gaussian noises {  $\xi_{ji}\sim \mathcal{N}(0,1)$} and observed similar performance.

{ 
It is pedagogical to observe the changes in the reconstruction by gradually increasing the dimension of the low-rank space. In \Cref{example: Different numbers of PSWFs} for the rectangular contrast, we plot the reconstructions for different dimensions of the low-rank space. The leading disk PSWFs capture the principal support of the contrast; as the dimension of the low-rank space increases, the reconstruction becomes better.}

\begin{figure}[htbp]
\centering

\includegraphics[width=0.3\linewidth]{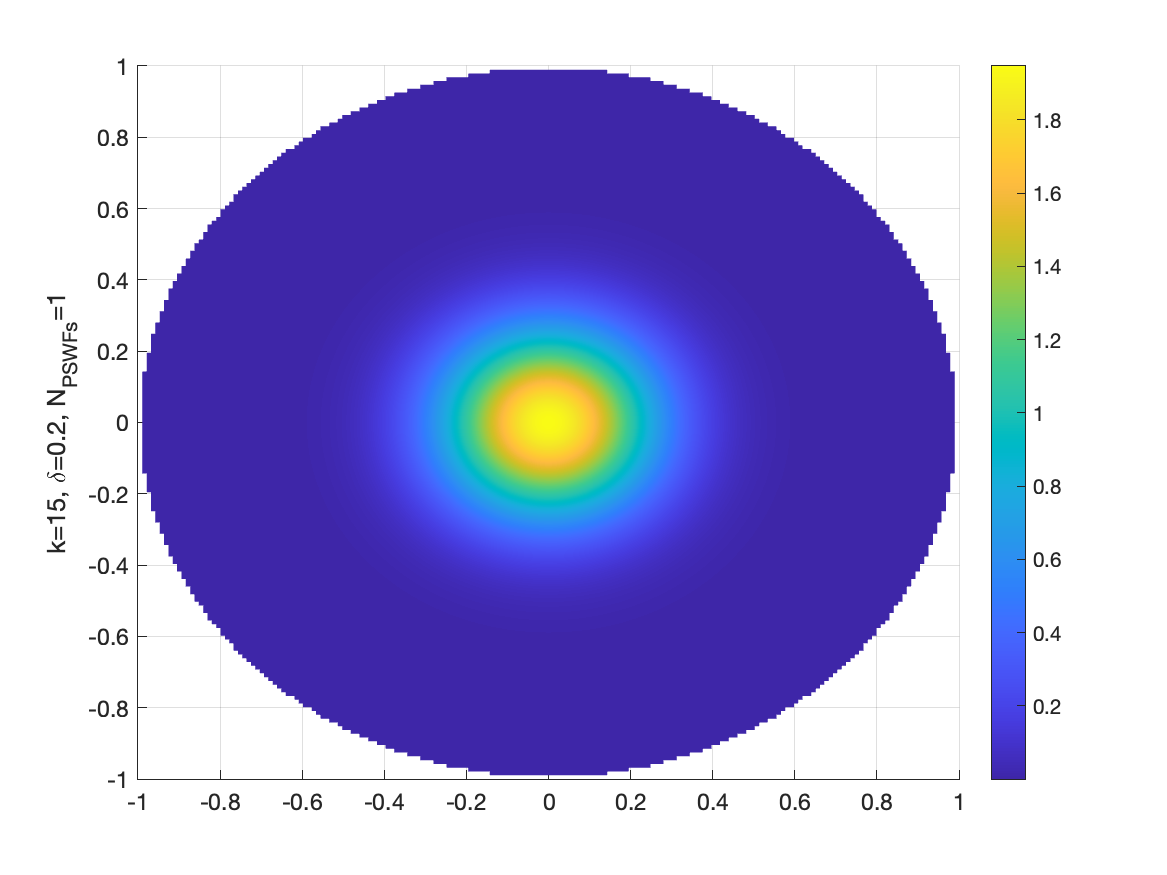}
\includegraphics[width=0.3\linewidth]{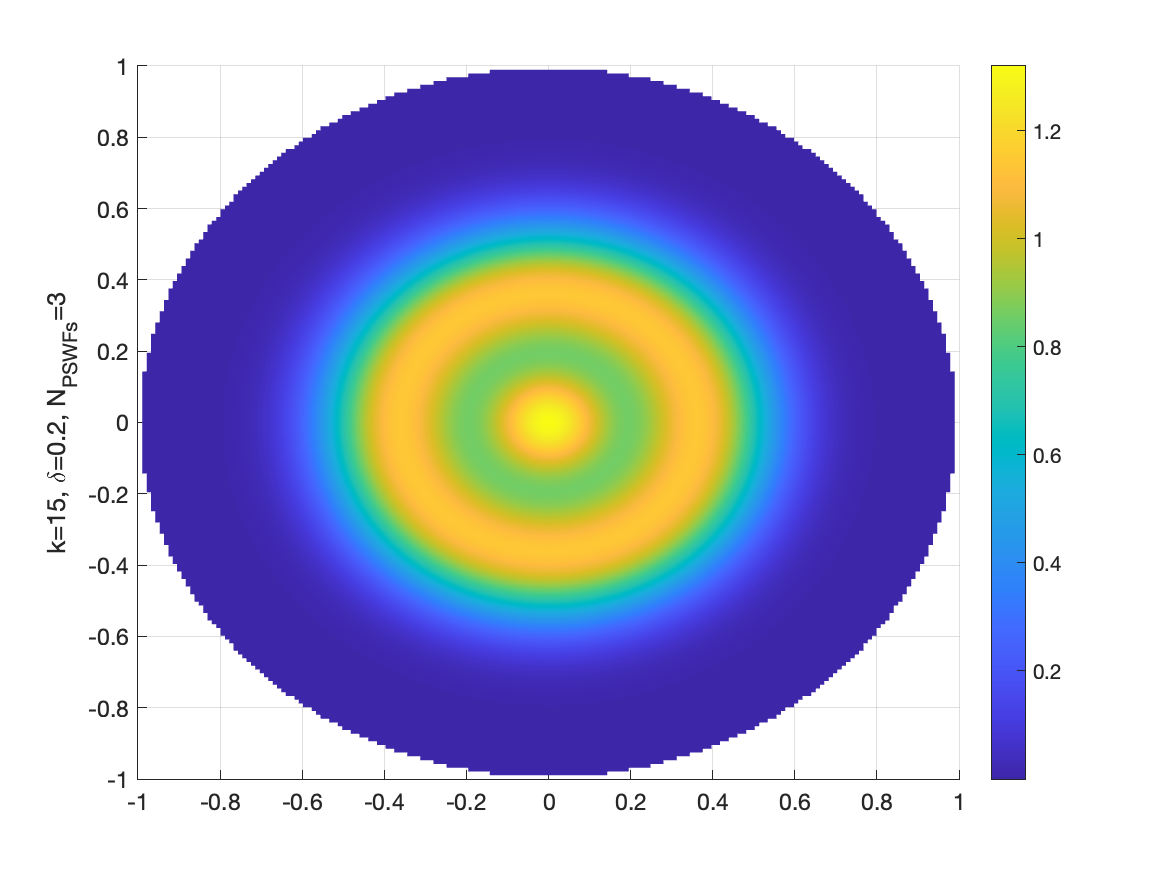}
\includegraphics[width=0.3\linewidth]{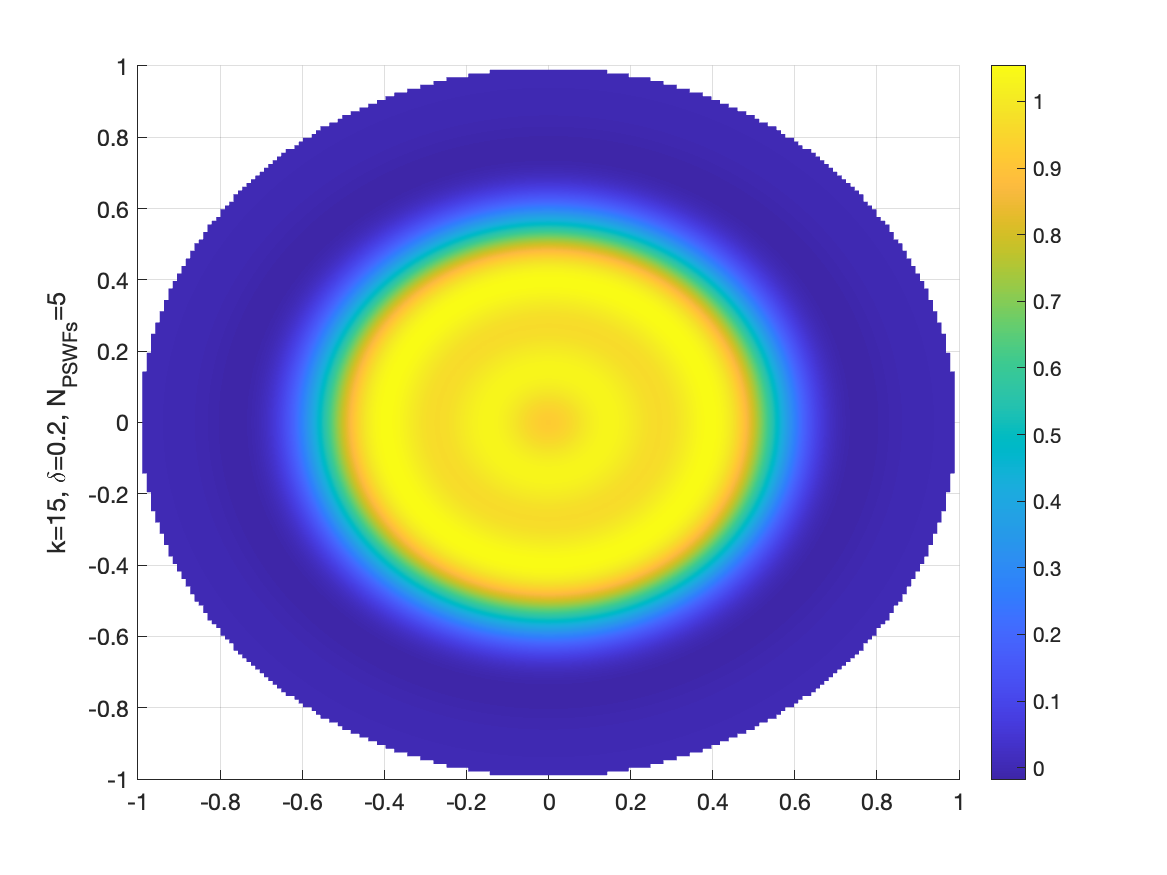}
\\
\includegraphics[width=0.3\linewidth]{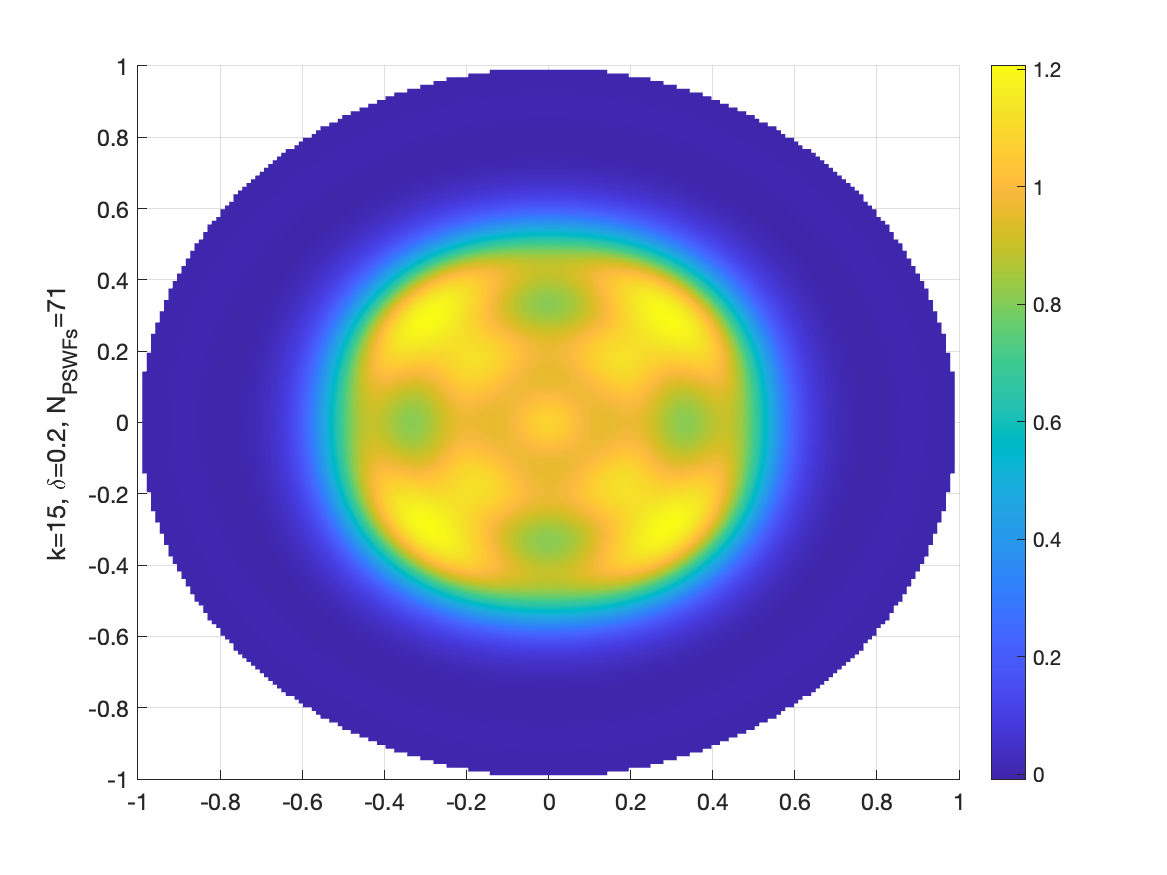}
\includegraphics[width=0.3\linewidth]{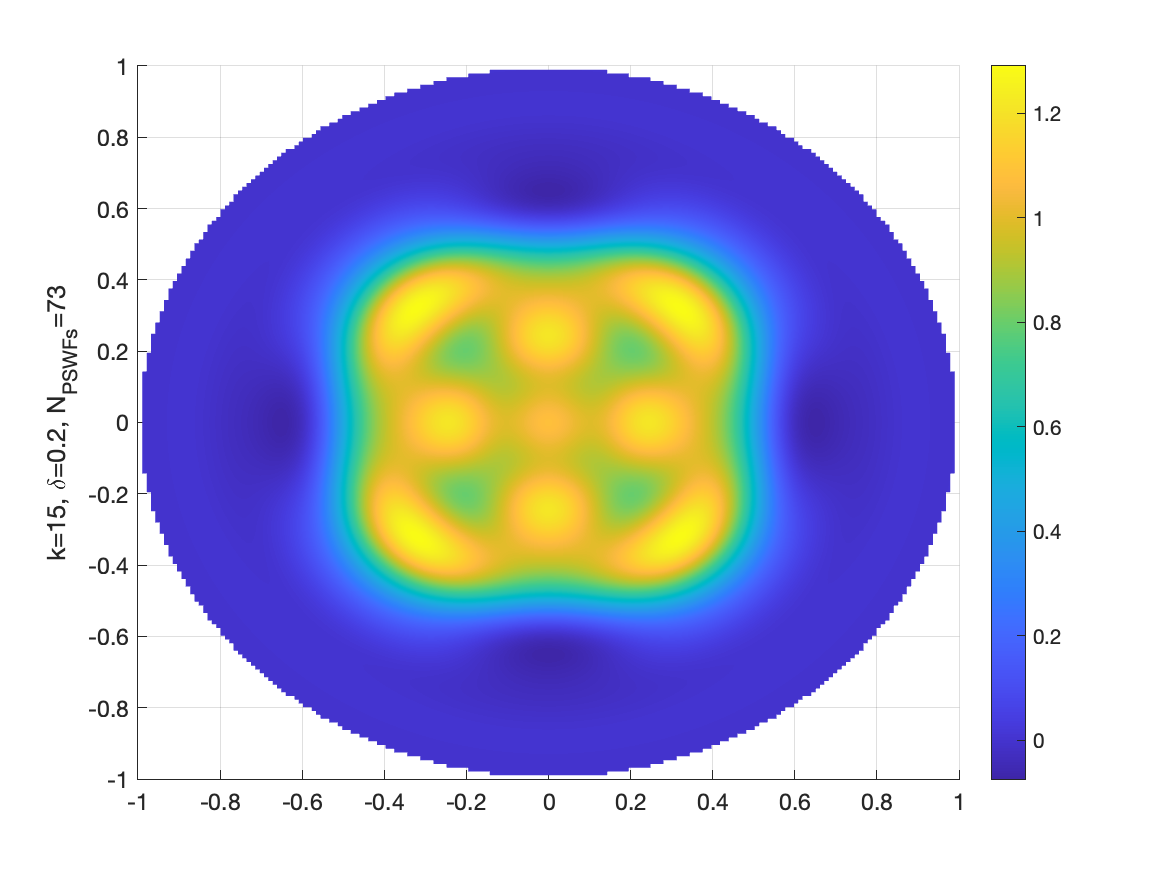}
\includegraphics[width=0.3\linewidth]{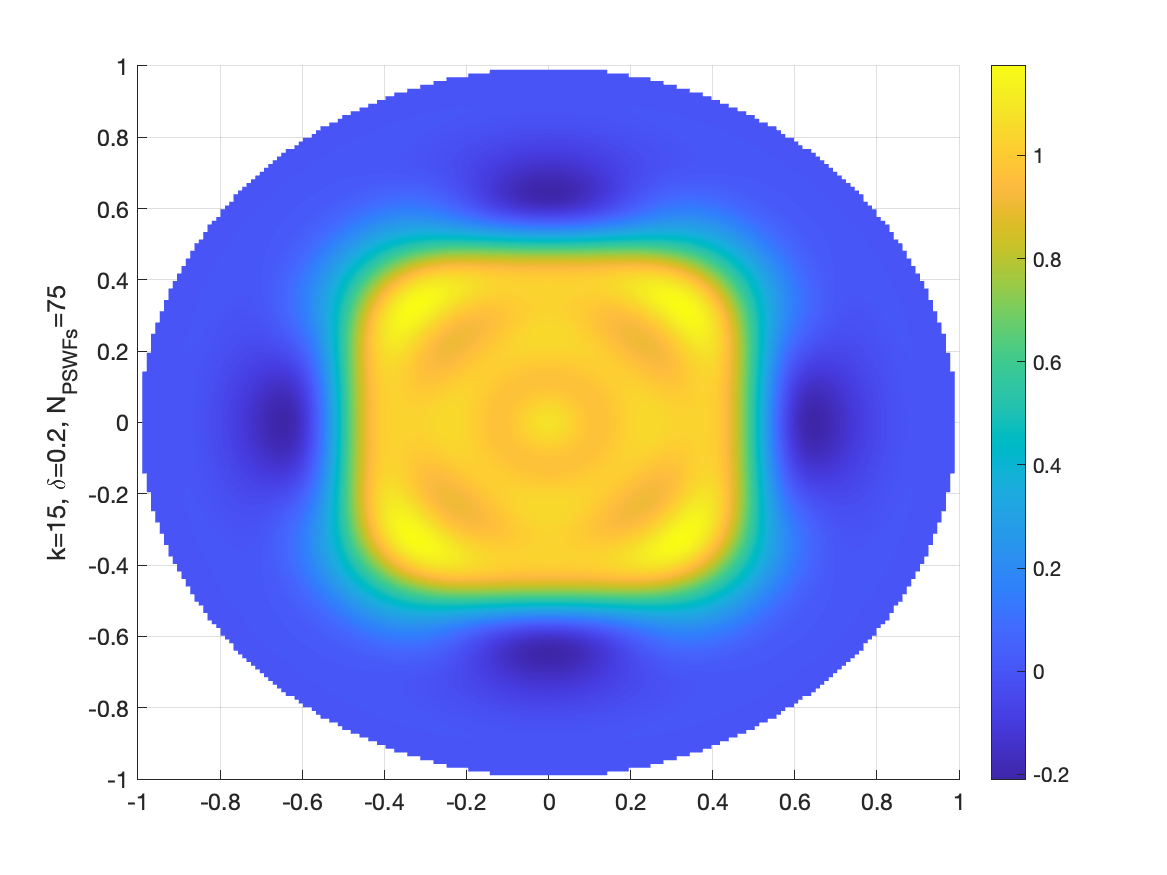}
\\
\includegraphics[width=0.3\linewidth]{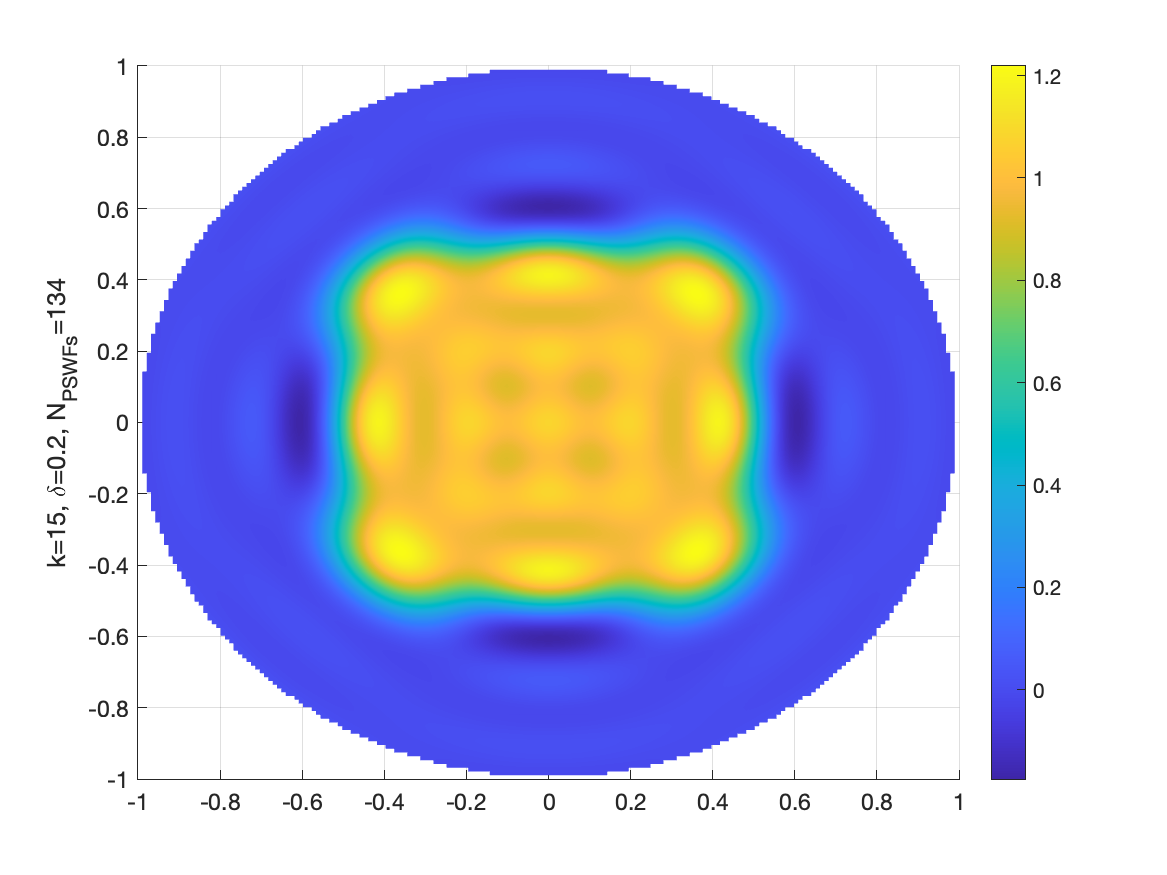}
\includegraphics[width=0.3\linewidth]{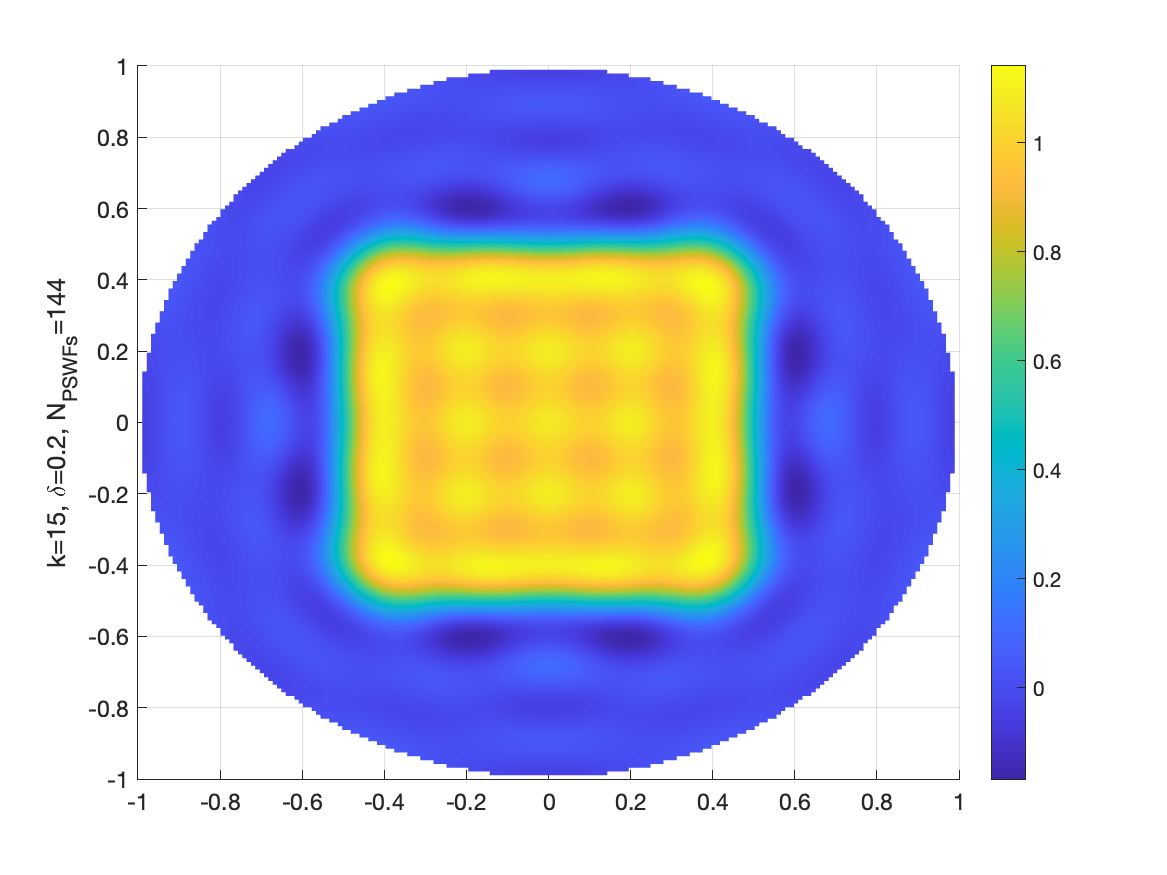}
\includegraphics[width=0.3\linewidth]{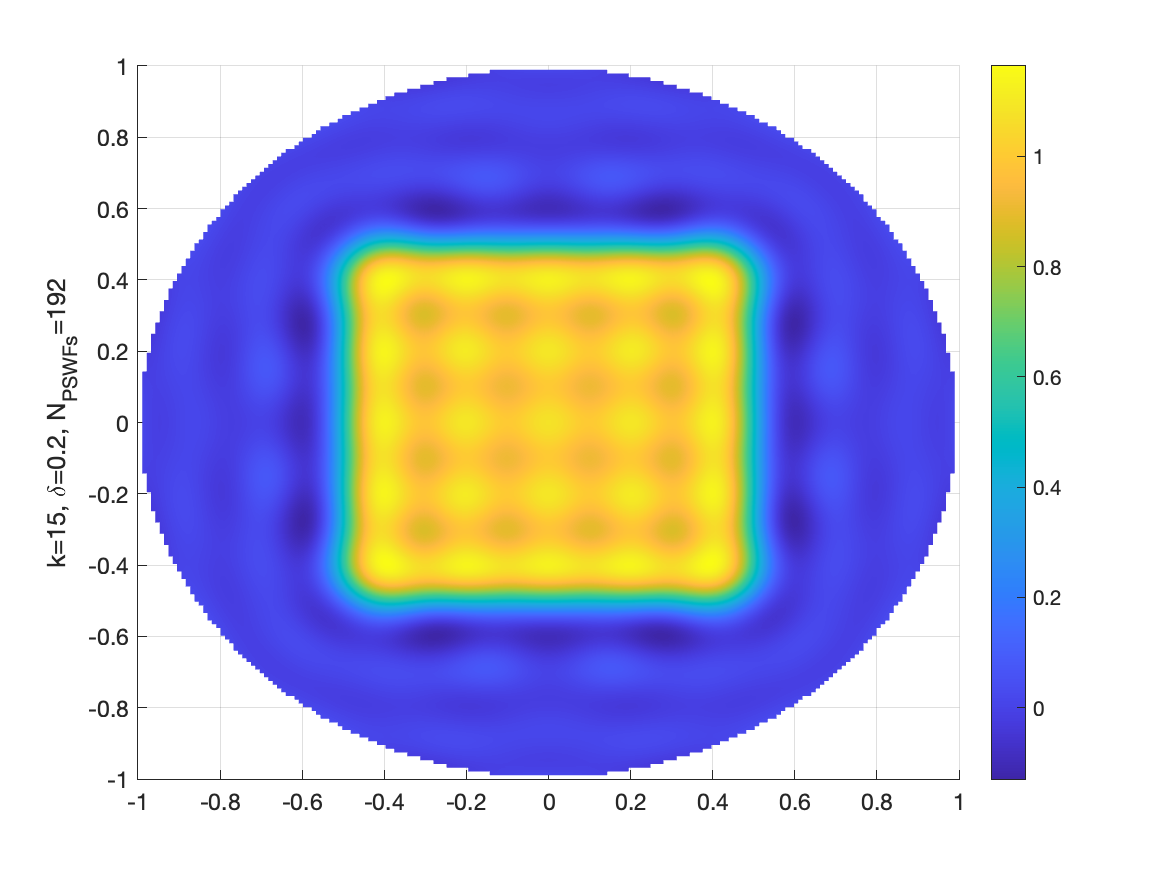}

  \caption{Reconstruction of the rectangle with the following dimensions of the low-rank space: $N_{\rm PSWFs}=1,~3,~5,~71,~73,~75,~134,~144,~192$. $k=15$ and $\delta=20\%$.}  \label{example: Different numbers of PSWFs}

\end{figure}

\subsubsection{Resolution}
We emphasize the case of three rectangles to demonstrate that our algorithm's capability in improved resolution. The three rectangles are given by
\begin{align*}
    \Omega_1&=\{(x_1, x_2)\in\mathbb{R}^2:-0.3 < x_1 <  -0.025,0.1 < x_2 <  0.3\},\\
    \Omega_2&=\{(x_1, x_2)\in\mathbb{R}^2:0.025 <  x_1 <  0.3,0.1 <  x_2 <  0.3\},\\
    \Omega_3&=\{(x_1, x_2)\in\mathbb{R}^2:-0.1 <  x_1 <  0.1,-0.2 <  x_2 <  0.025\},
\end{align*}
which gives the smallest distance between the rectangles  $0.05$.
The reconstruction in \Cref{example: U noisy super resolution} is for noisy data with $20\%$ noise level and $c=2k = 30$. The distance is less than half wavelength $\frac{\pi}{k}\approx 0.2$ and these objects can be clearly distinguished from the reconstruction. We point out that a similar improved resolution was also reported in \cite{novikov22a}.

\begin{figure}[htbp]
\centering

\includegraphics[width=0.45\linewidth]{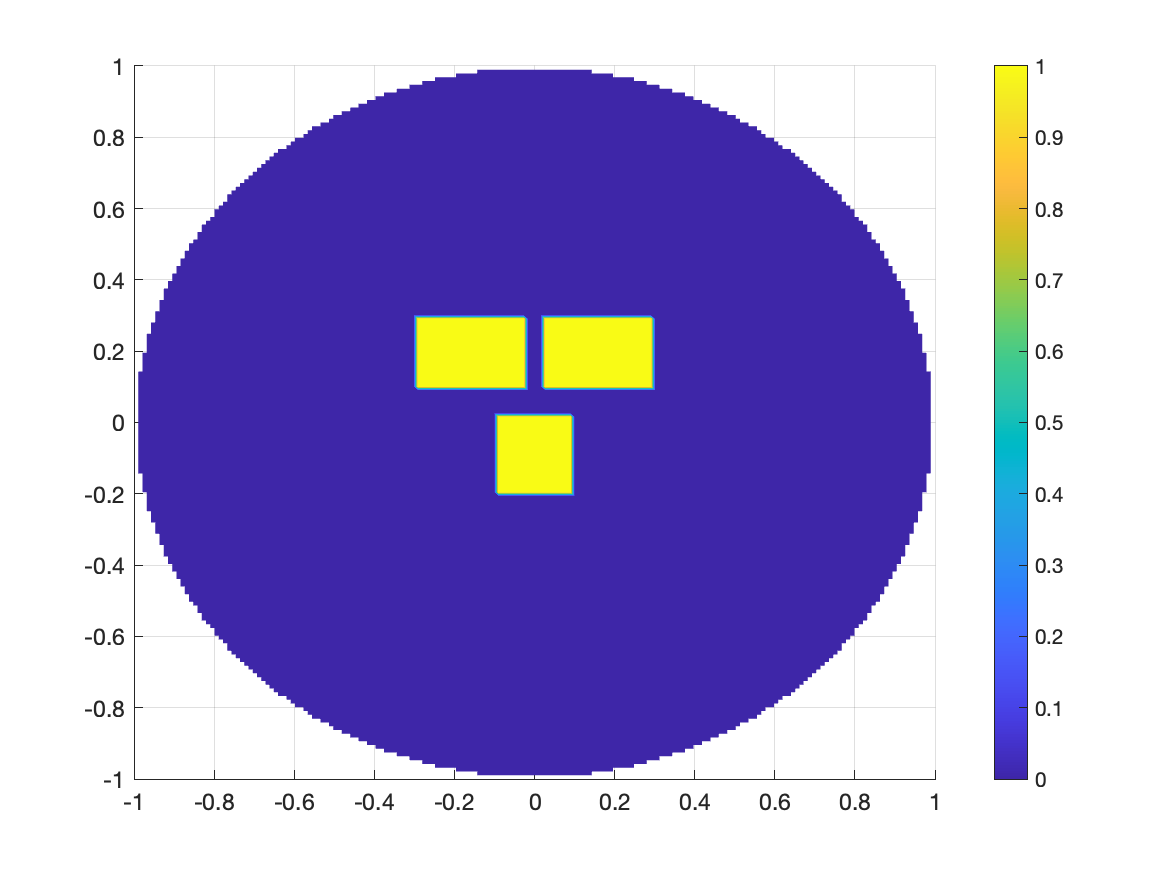}
\includegraphics[width=0.45\linewidth]{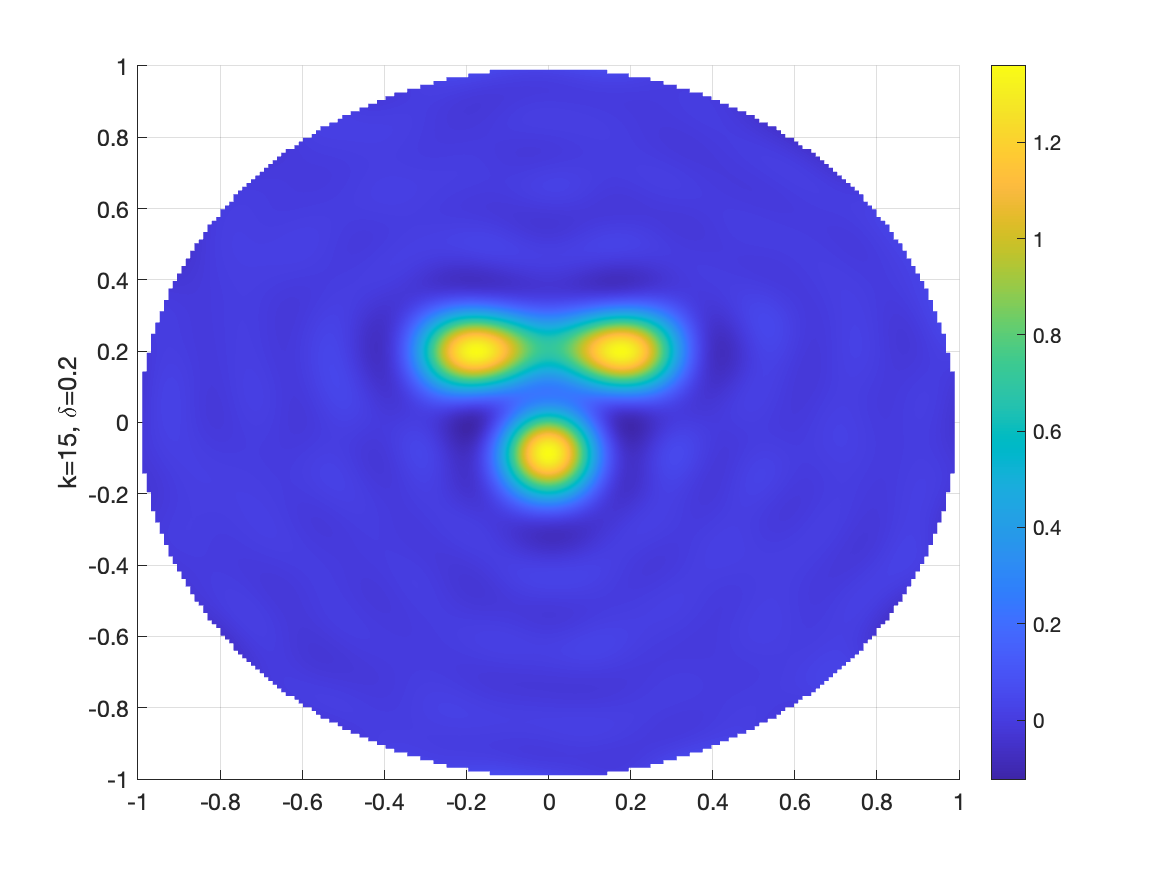}

  \caption{Reconstruction of three close rectangles. Left: exact contrast, right: reconstruction. $20\%$ noisy data.}
  \label{example: U noisy super resolution}
\end{figure}

\subsubsection{Far-field data}\label{far-field data }
Having successfully tested the algorithm using the post-processed data, in this section we implement the full algorithm of \Cref{Algorithm: lrInvScatt} with the Born far-field data  $\{u_b^{\infty}(\hat{x}_m;\hat{\theta}_\ell;k): ~m = 1,2,\dots, N_1, ~\ell = 1,2,\dots, N_2\}$. We follow the same rule as in \eqref{add noise} to add perturbed noise instead to the data $\{u_b^{\infty}(\hat{x}_m;\hat{\theta}_\ell;k): ~m = 1,2,\dots, N_1, ~\ell = 1,2,\dots, N_2\}$.

To illustrate the potential of our algorithm in dimensionality reduction, we work with far-field data of dimension  $N_1\times N_2 = 50\times 50, 100\times 100, 200\times 200, 500\times 500$. The test is done with $c = 30$.
Results in  \Cref{example: farfield noisy} show that our algorithm has the potential to reduce the dimensionality of large-scale data.

\begin{figure}[htbp]
\centering
\subfloat[$N_1=N_2 = 50$]{\includegraphics[width=0.45\linewidth]{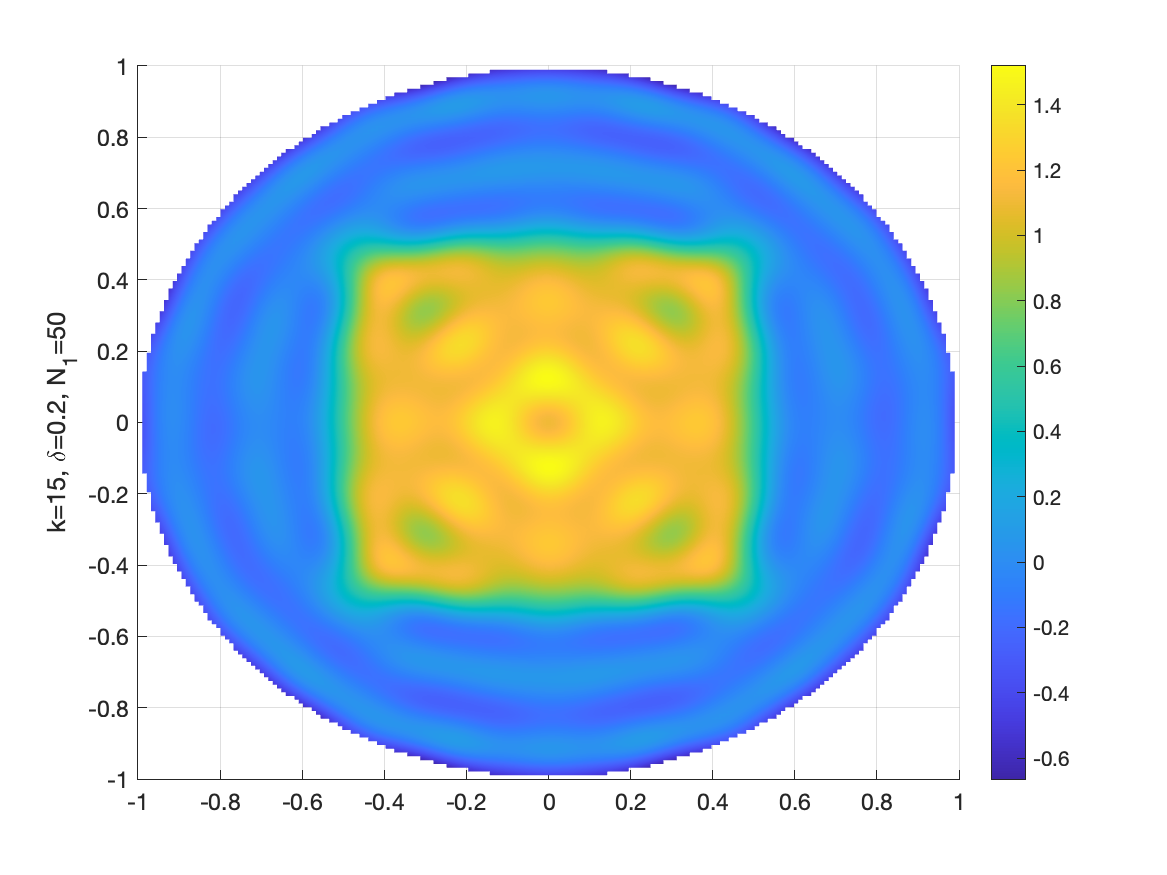}}
 \subfloat[$N_1=N_2 = 100$]
{\includegraphics[width=0.45\linewidth]{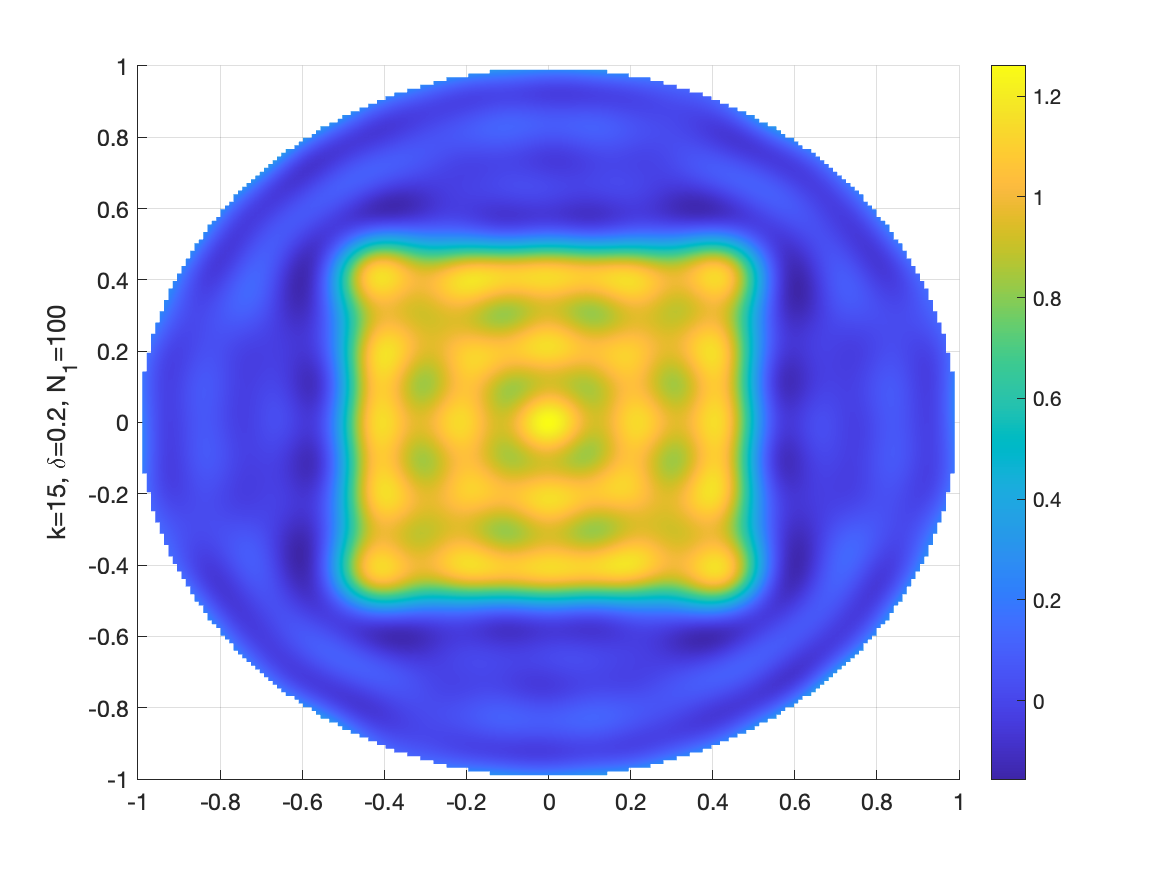}}
\quad
\subfloat[$N_1 = N_2 = 200$]
{\includegraphics[width=0.45\linewidth]{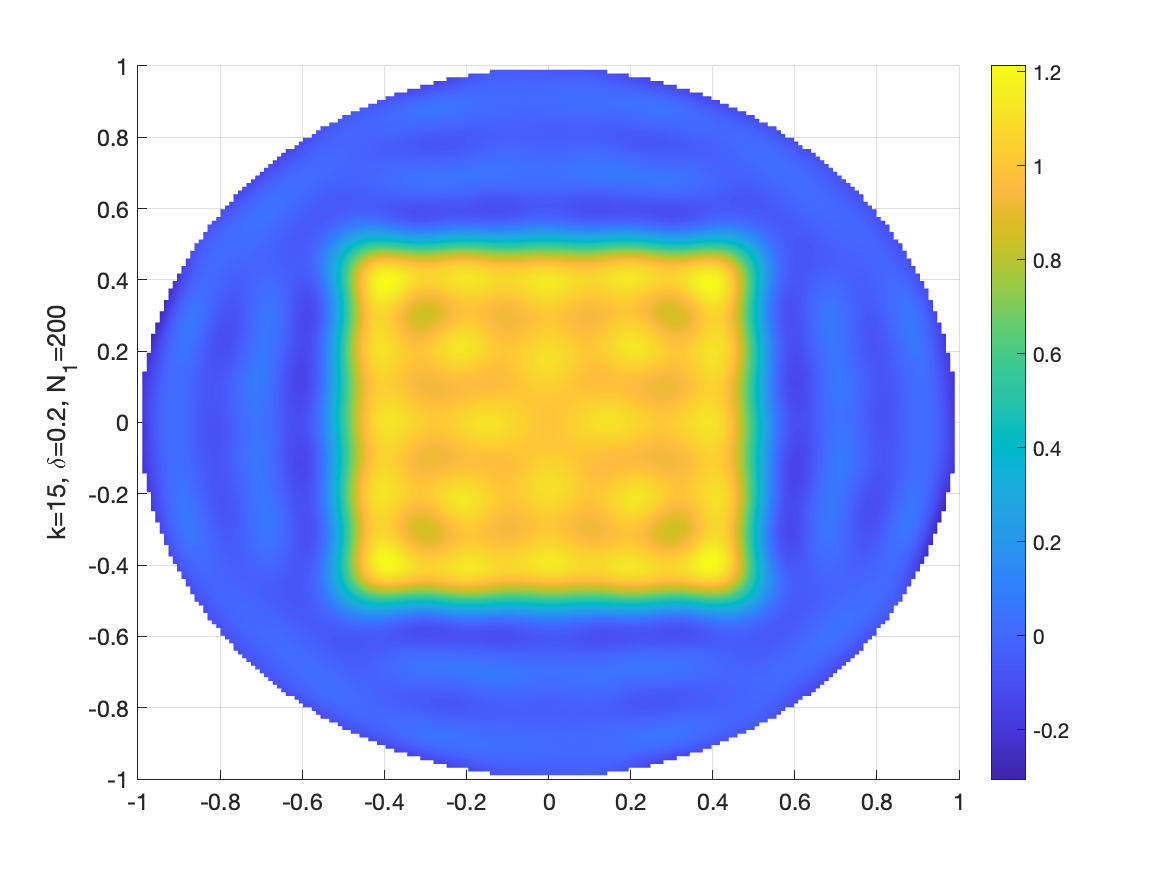}}
\subfloat[$N_1 =  N_2 = 500$]
{\includegraphics[width=0.45\linewidth]{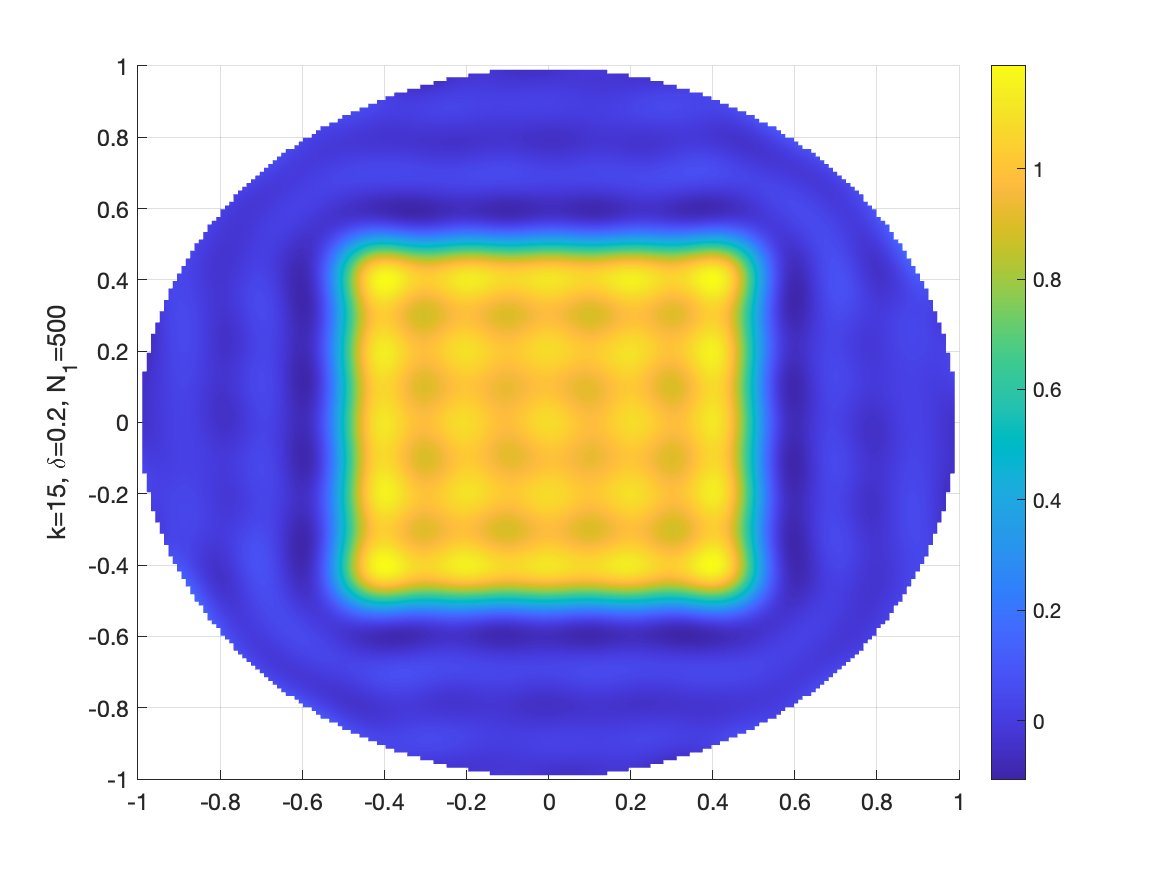}}

   \caption{Reconstruction of a rectangle with   $20\%$ noisy far-field data   of dimension  $N_1\times N_2 = 50\times 50, 100\times 100, 200\times 200, 500\times 500$. }    \label{example: farfield noisy}

\end{figure}

\subsubsection{Increasing stability} \label{section: increasing stability Born}

Another interesting aspect is the increasing stability (cf. \cite{HrycakIsakov04,SubbarayappaIsakov07}): the resolution becomes better as $c=2k$ gets larger. 
We first demonstrate this property by the Born far-field data and we will test this property using the far-field data obtained by the nonlinear model in Section \ref{section: increasing stability}.
\Cref{example: farfield increasing stability} plots the reconstruction of three rectangles using our method for $k=15,~20,~25,~30,~35,~40$. Clearly, we have observed the increasing stability. 
{  Moreover, in   \Cref{example: near boundary} we test the performance when there are three rectangles close to the boundary of the unit disk, it is observed that both the supports and the parameters of these three rectangles can be reconstructed well. }

\begin{figure}[htbp]
\centering
\subfloat[$k=15$]
{
\includegraphics[width=0.45\linewidth]{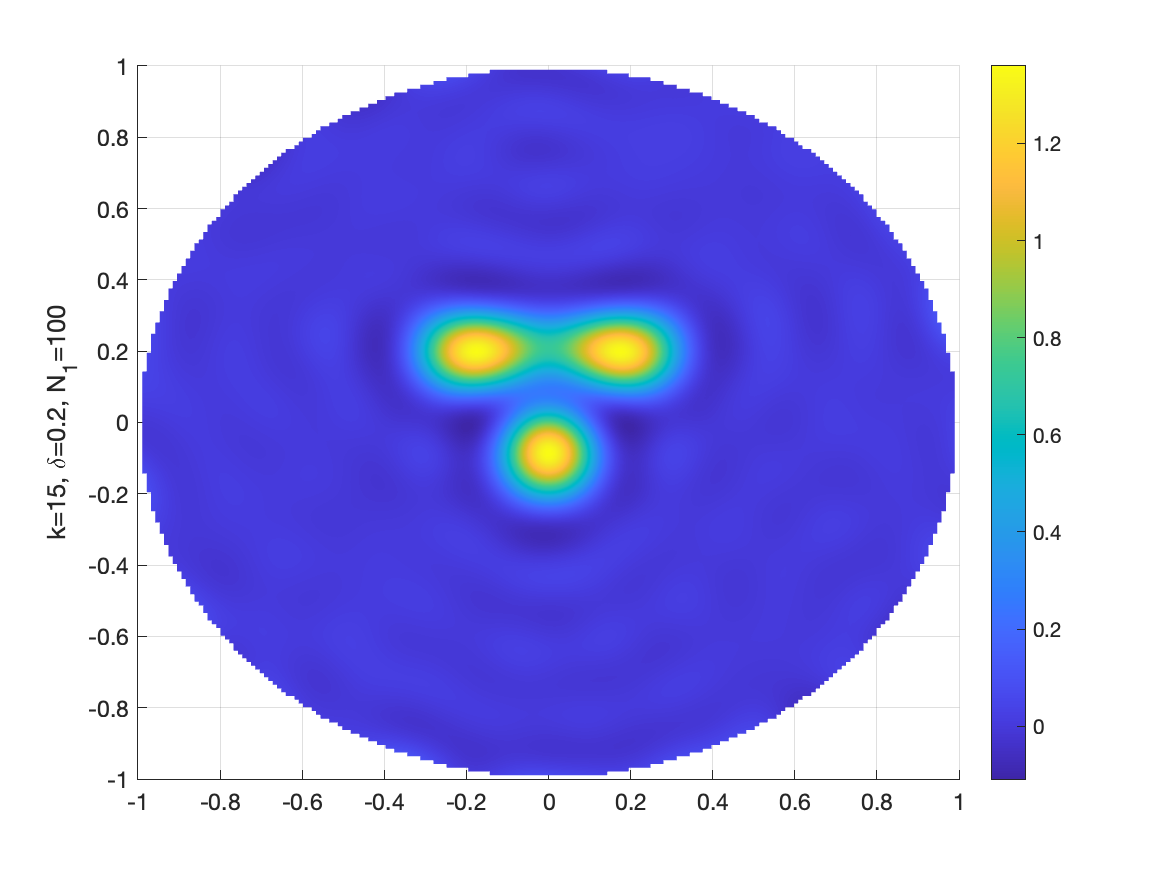}
}
\subfloat[$k=20$]
{
\includegraphics[width=0.45\linewidth]{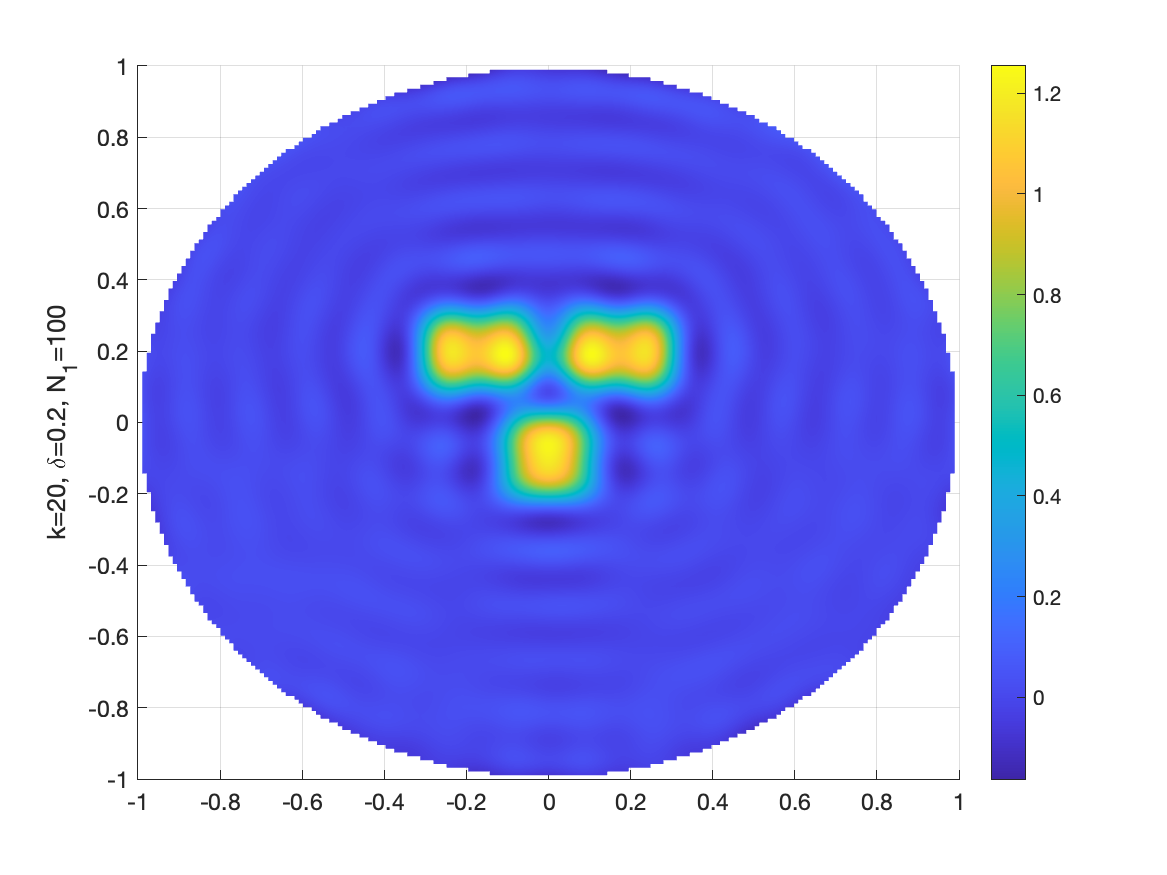}
}
\quad
\subfloat[$k=25$]
{
\includegraphics[width=0.45\linewidth]{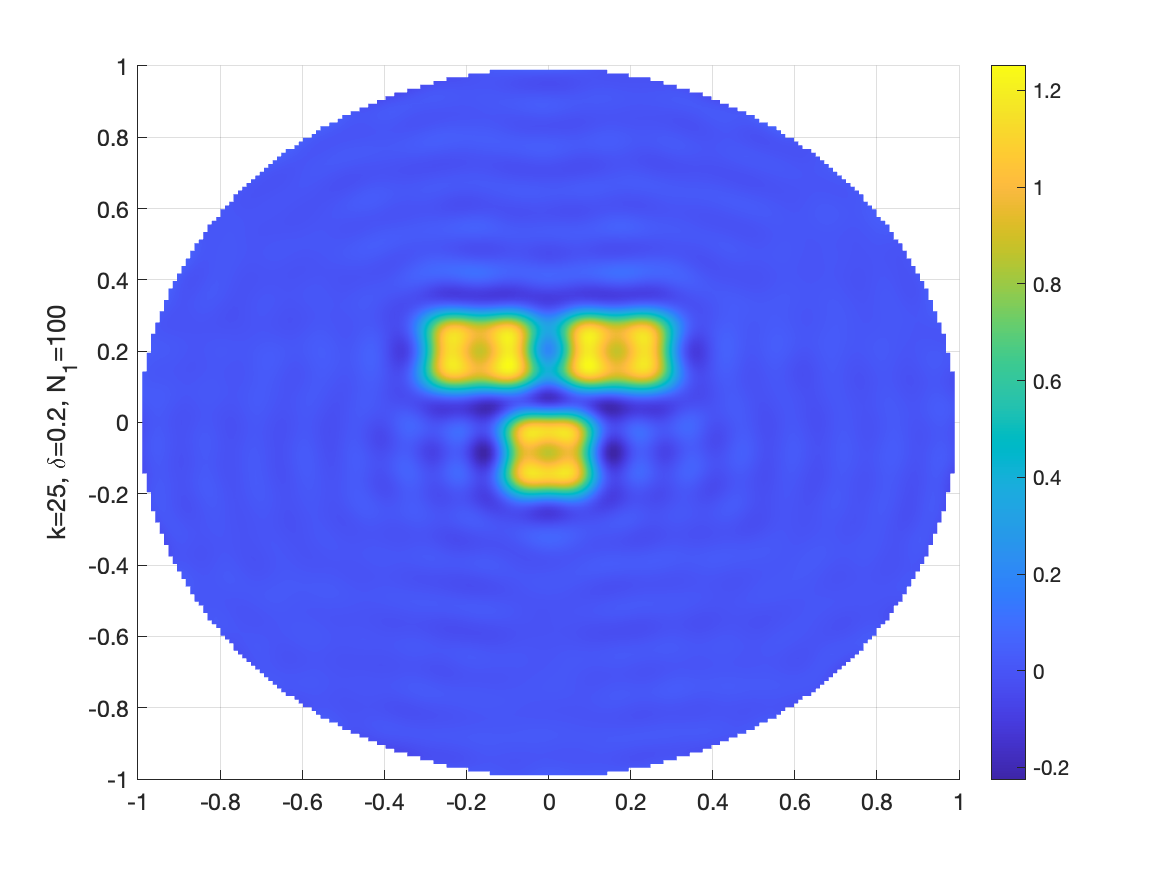}
}
\subfloat[$k=30$]
{
\includegraphics[width=0.45\linewidth]{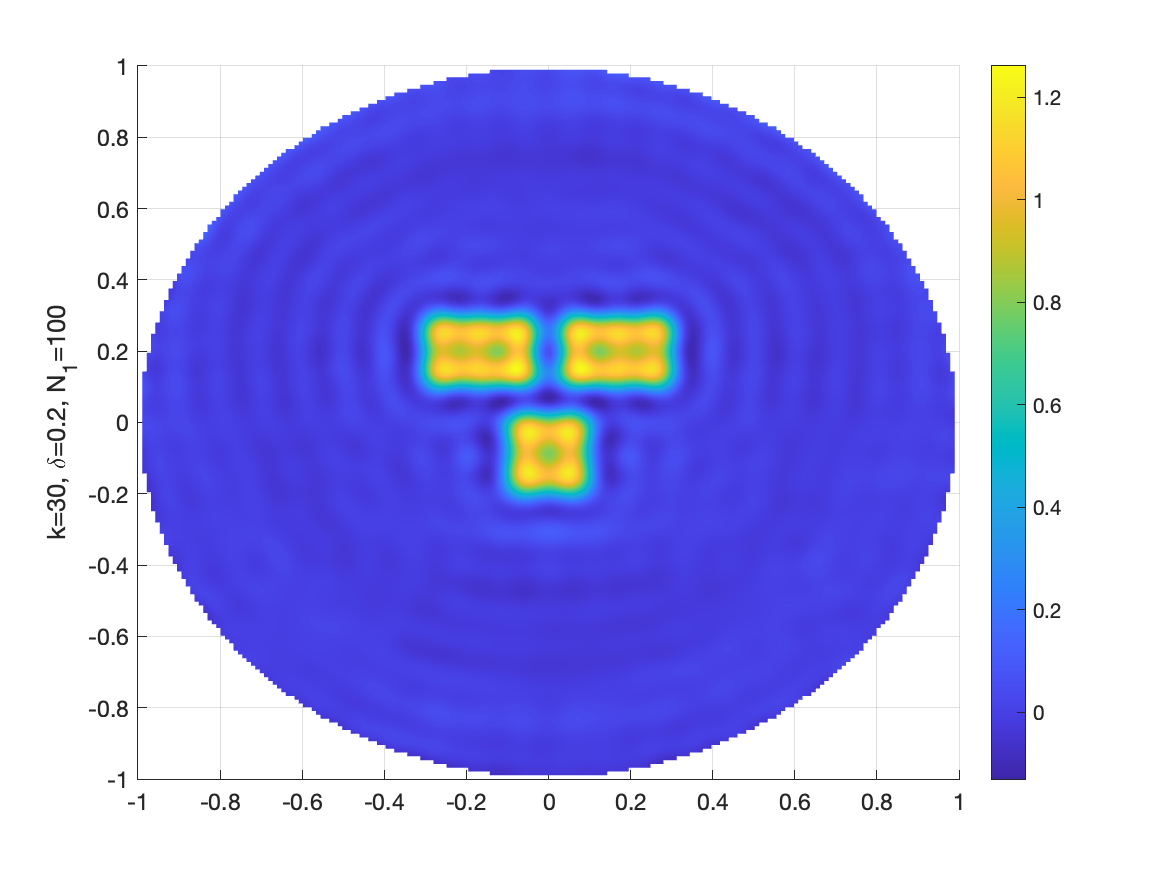}
}
\quad
\subfloat[$k=35$]
{
\includegraphics[width=0.45\linewidth]{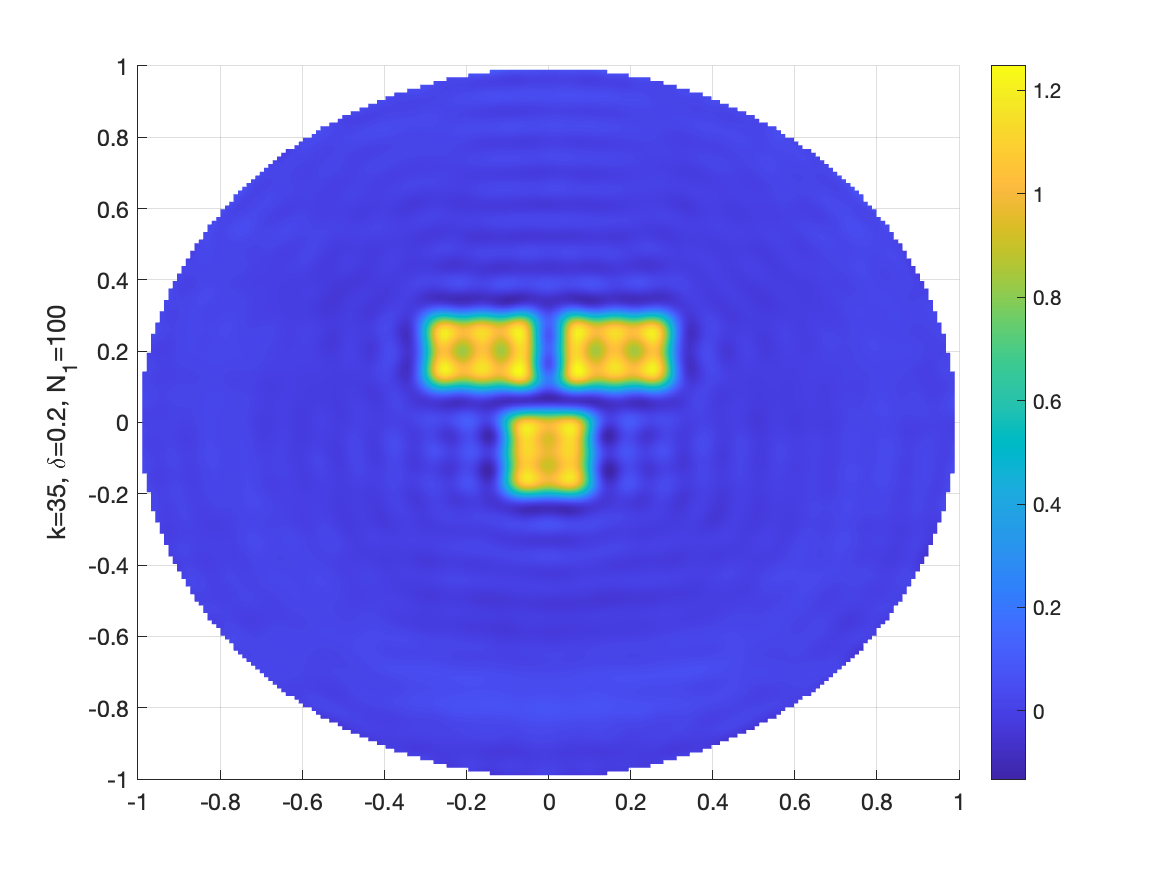}
}
\subfloat[$k=40$]
{
\includegraphics[width=0.45\linewidth]{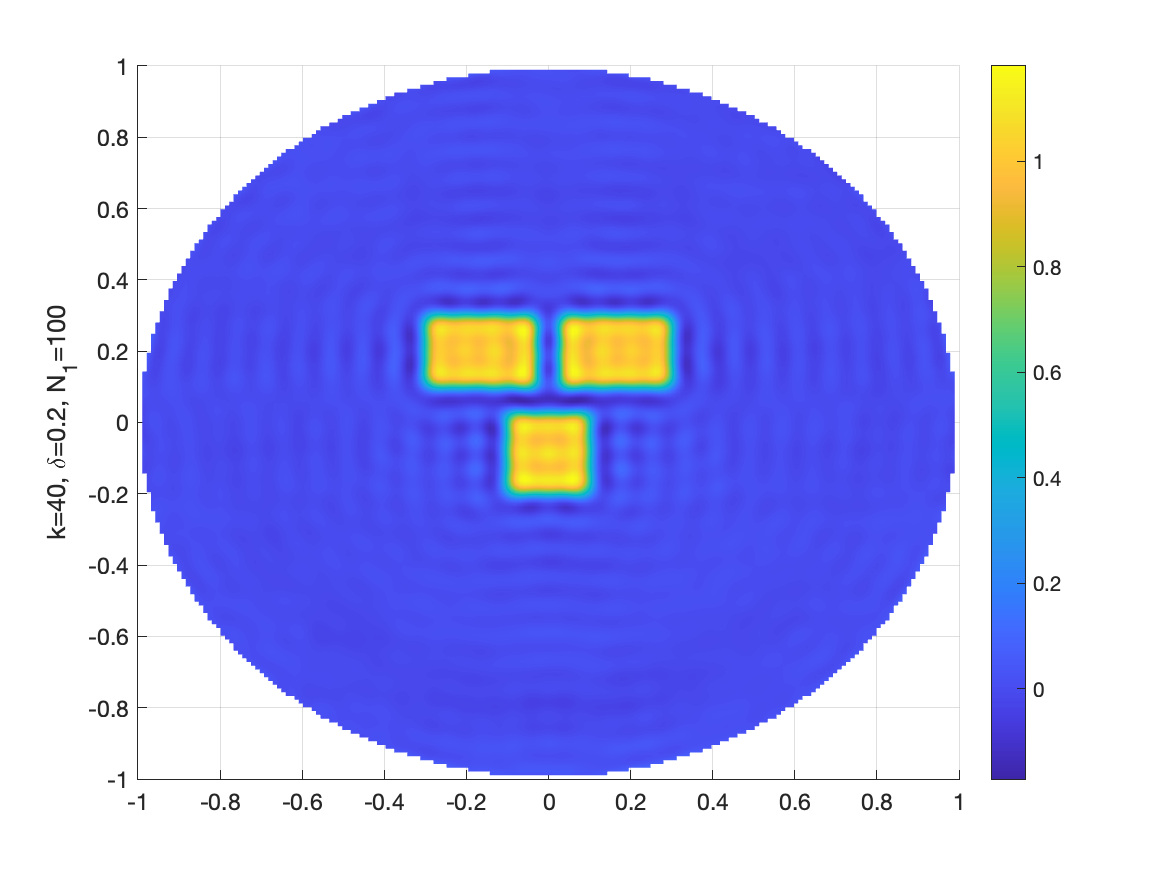}
}
  \caption{Reconstruction of three rectangles using our method for $k=15,~20,~25,~30,~35,~40$ using $20\%$ noisy data. }    \label{example: farfield increasing stability}

\end{figure}

\begin{figure}[htbp]
\centering
\subfloat[Ground truth]
{
\includegraphics[width=0.45\linewidth]{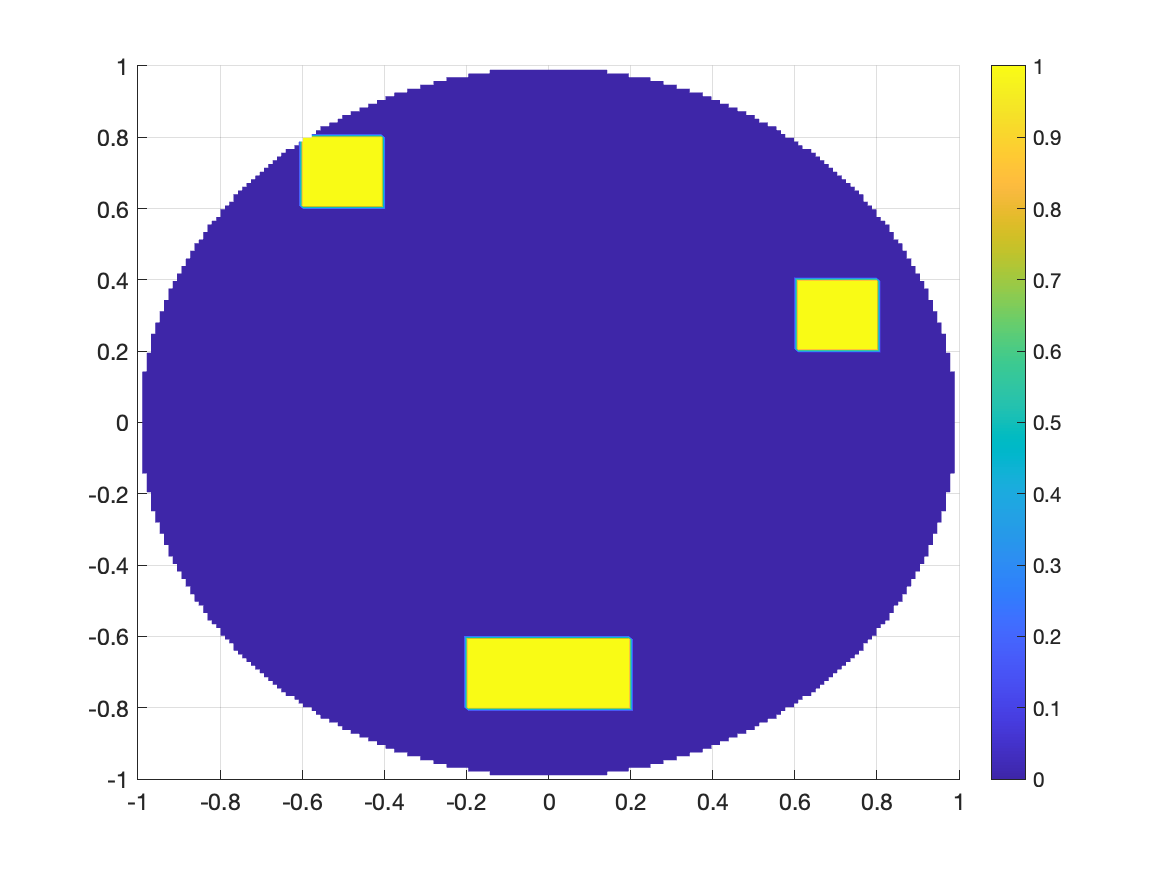}
}
\subfloat[$k=15$]
{
\includegraphics[width=0.45\linewidth]{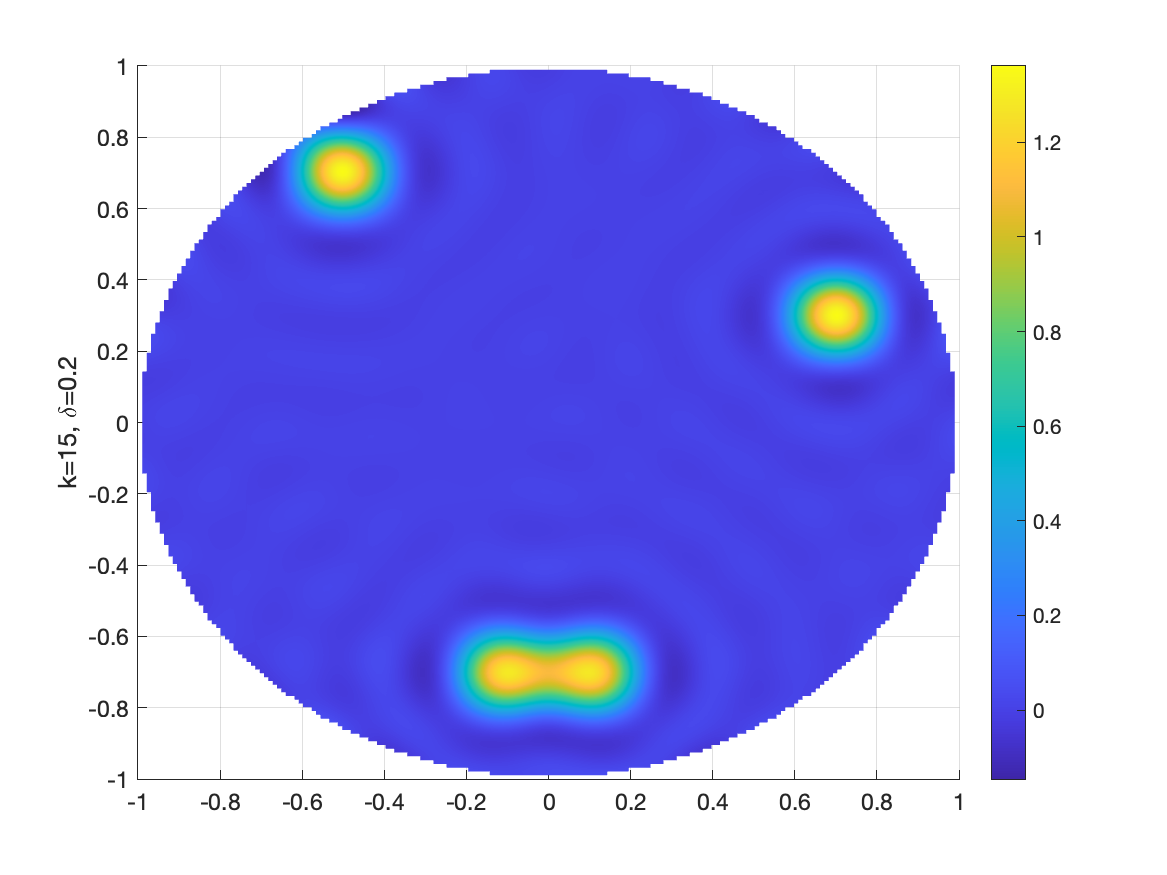}
}
\quad
\subfloat[$k=20$]
{
\includegraphics[width=0.45\linewidth]{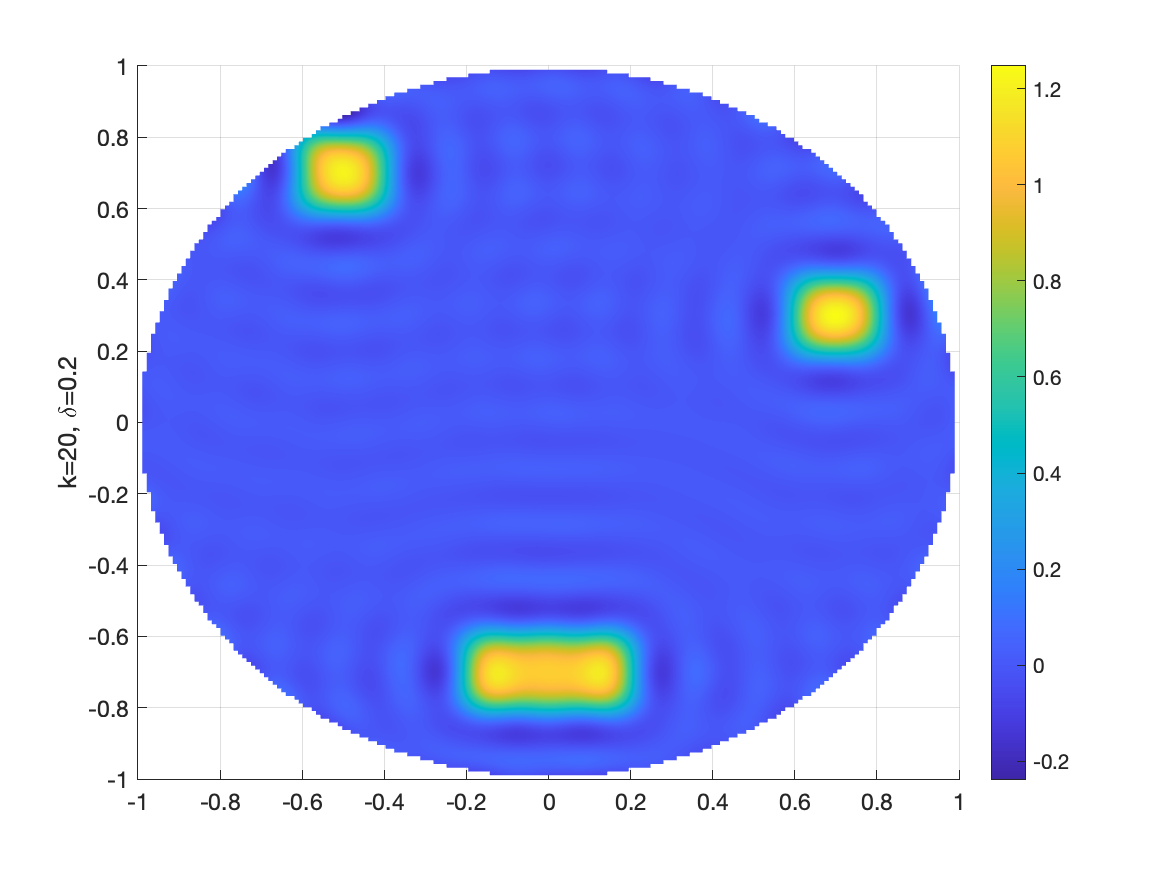}
}
\subfloat[$k=25$]
{
\includegraphics[width=0.45\linewidth]{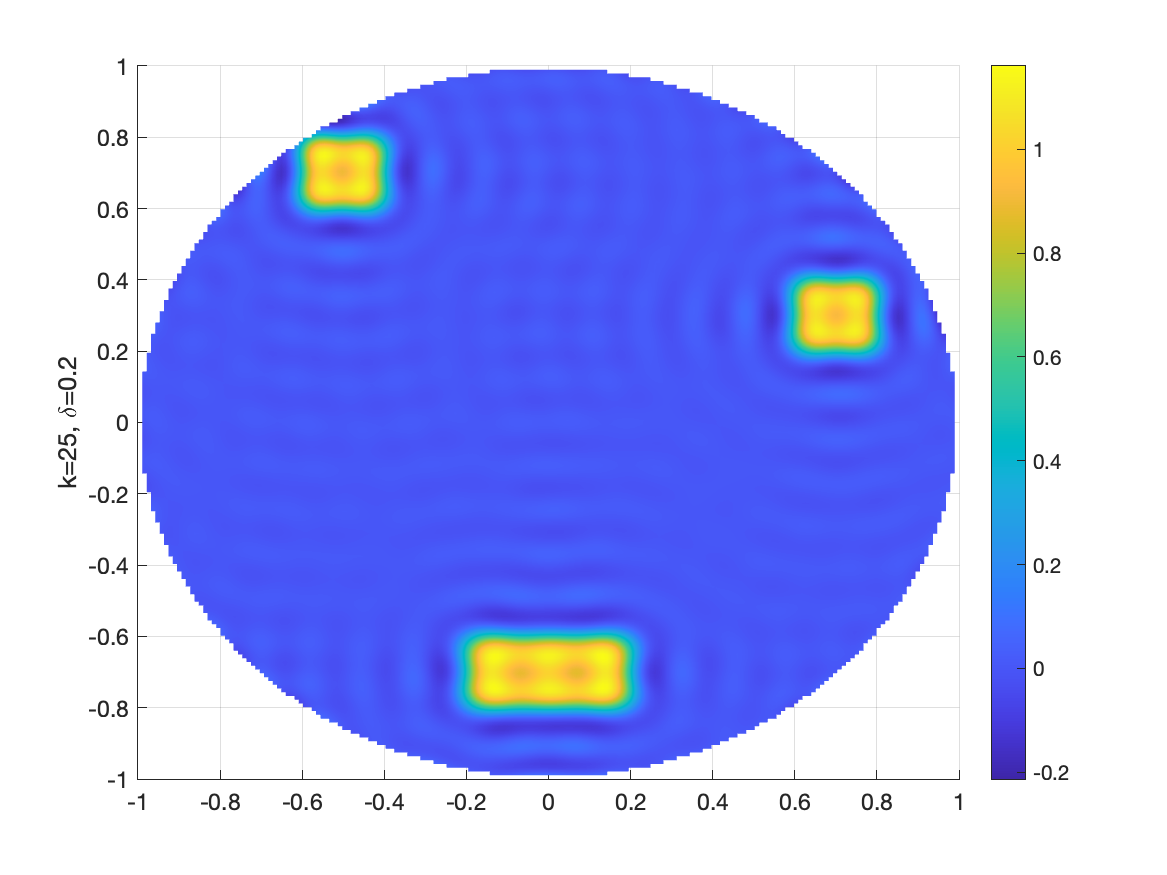}
}
\quad
\subfloat[$k=30$]
{
\includegraphics[width=0.45\linewidth]{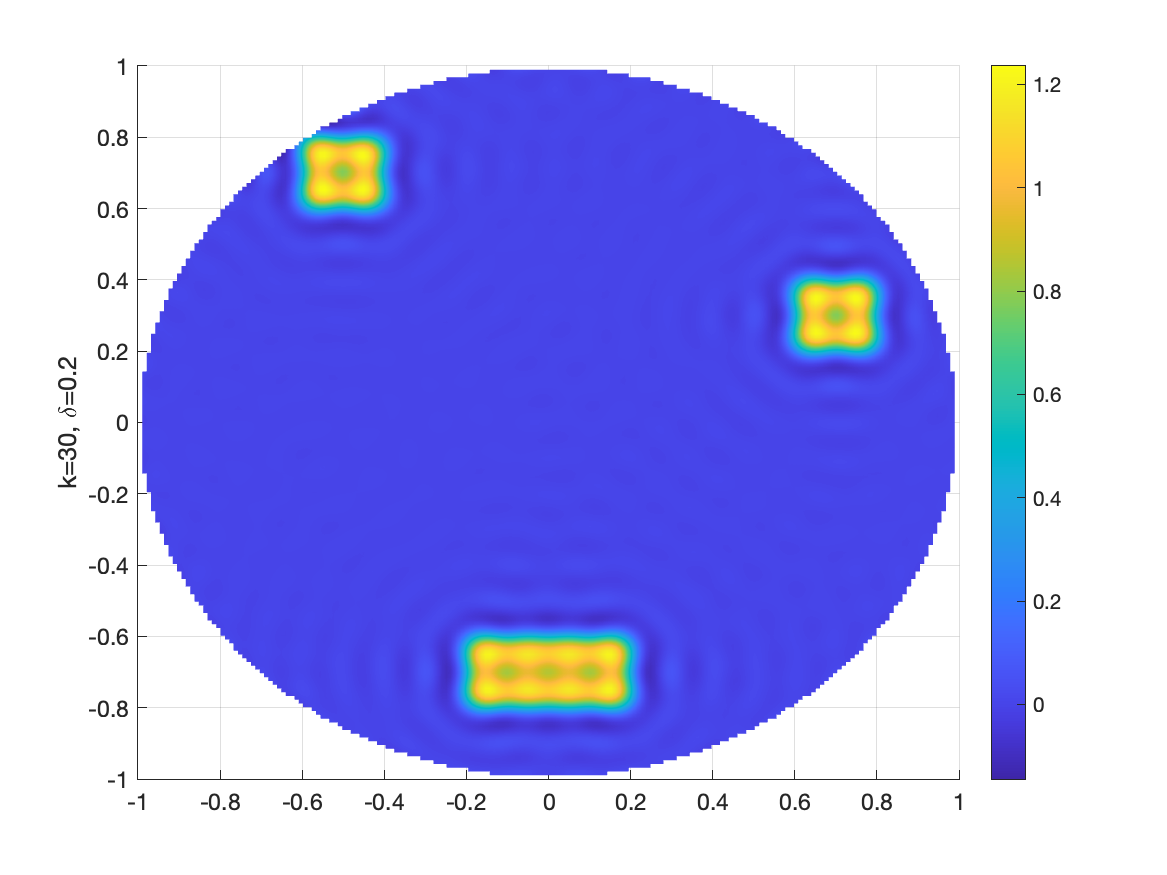}
}
\subfloat[$k=40$]
{
\includegraphics[width=0.45\linewidth]{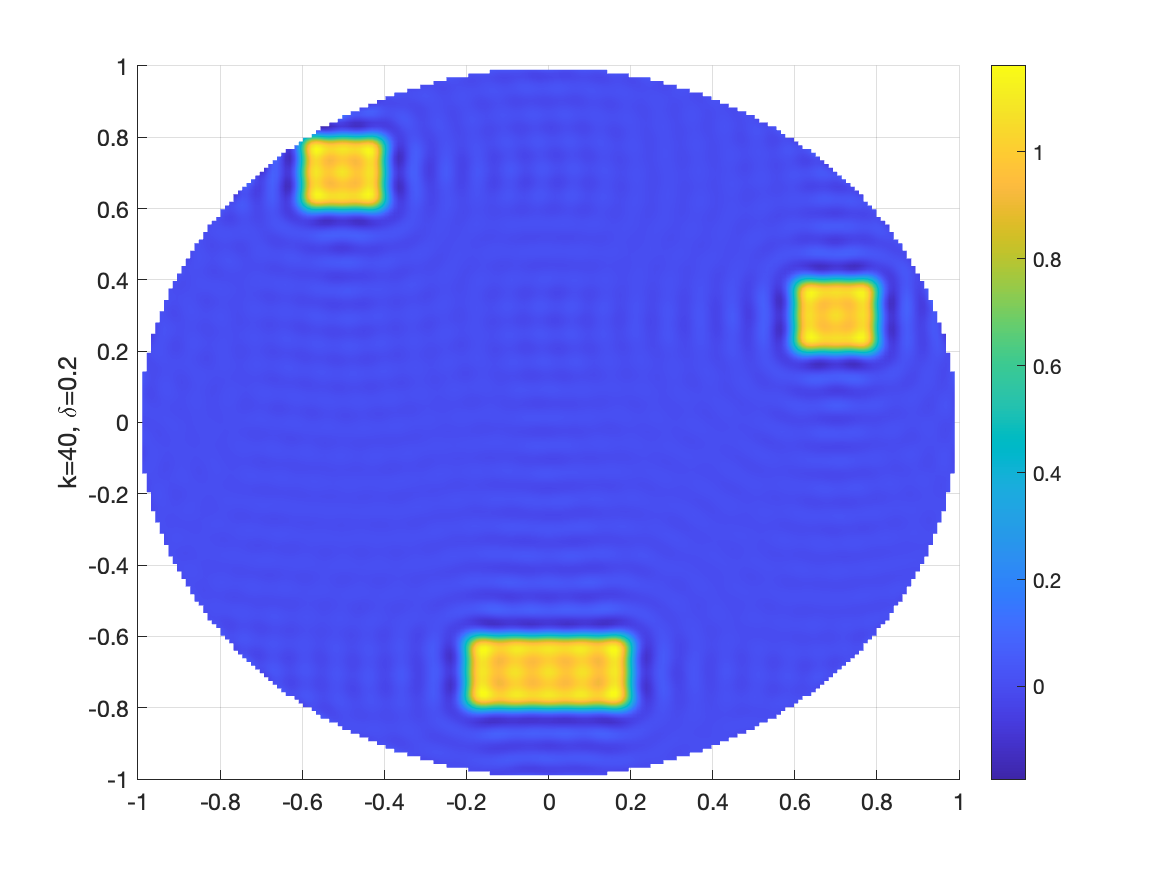}
}
  \caption{Reconstruction of three rectangles near the boundary using our method for $k=15,~20,~25,~30,~40$ using $20\%$ noisy data. }    \label{example: near boundary}

\end{figure}

\subsubsection{Comparison with Matlab built-in fft}
In this section, we compare our algorithm with the Matlab built-in function fft. The motivation is that the Born data is related to the restricted Fourier transform of the unknown $q$, therefore it makes sense to consider solving the unknown $q$ by the fast Fourier transform. To apply the Matlab built-in function fft, {  one needs to extend the post-processed data in the disk to a unit square by zero}; in addition, the number of  sampling points in the unit square $m$ is connected with wave number $k$, i.e., $m=\frac{4k}{\pi}$, where $k$ needs to be a multiple of $\pi$. We point the readers to the fractional Fourier transform  \cite{inverarity02} which overcame this issue but we only consider the Matlab built-in fft.

For both methods, we use noisy Born far-field data $\{u^{\infty,\delta}_b(\hat{x}_i;\hat{\theta}_j;k)\}_{i=1,j=1}^{N_1,N_2}$ with   $N_1=N_2={ 100}$, and noise level $\delta=20\%$.
For { the method based on the low-rank structure}, we directly apply \Cref{Algorithm: lrInvScatt}. For the Matlab fft algorithm, exact nodes are given by $\{((i-\frac{m}{2})\frac{2}{m},(j-\frac{m}{2})\frac{2}{m})\}_{i=0,j=0}^{m-1}$ where $m=\frac{4k}{\pi}$ is an even number; following the idea of \Cref{Algorithm: lrData}, we also find the approximate or mock-quadrature nodes that are approximations to the fft exact nodes.
 { 
It is observed from \Cref{FFT} that our proposed method has less blurring effects since it utilizes the post-processed data directly, as apposed to the fft whose blurring effect is possibly due to extending the post-processed data from the unit disk to the unit square by zero.}

\subsection{{ Full model}}
In practical application, full far-field  data $u^\infty(\hat{x};\hat{\theta};k)$ are measured rather than Born far-field data $u^\infty_b(\hat{x};\hat{\theta};k)$. This motivates us to test our algorithm with the full far-field data  $\{u^\infty(\hat{x}_i;\hat{\theta}_j;k)\}$ to understand the limitations and potentials.

\subsubsection{Data generation}

\begin{figure}[htbp]
\centering
  \subfloat[Proposed Method: $k=4\pi$.]
  {
      \includegraphics[width=0.45\linewidth]{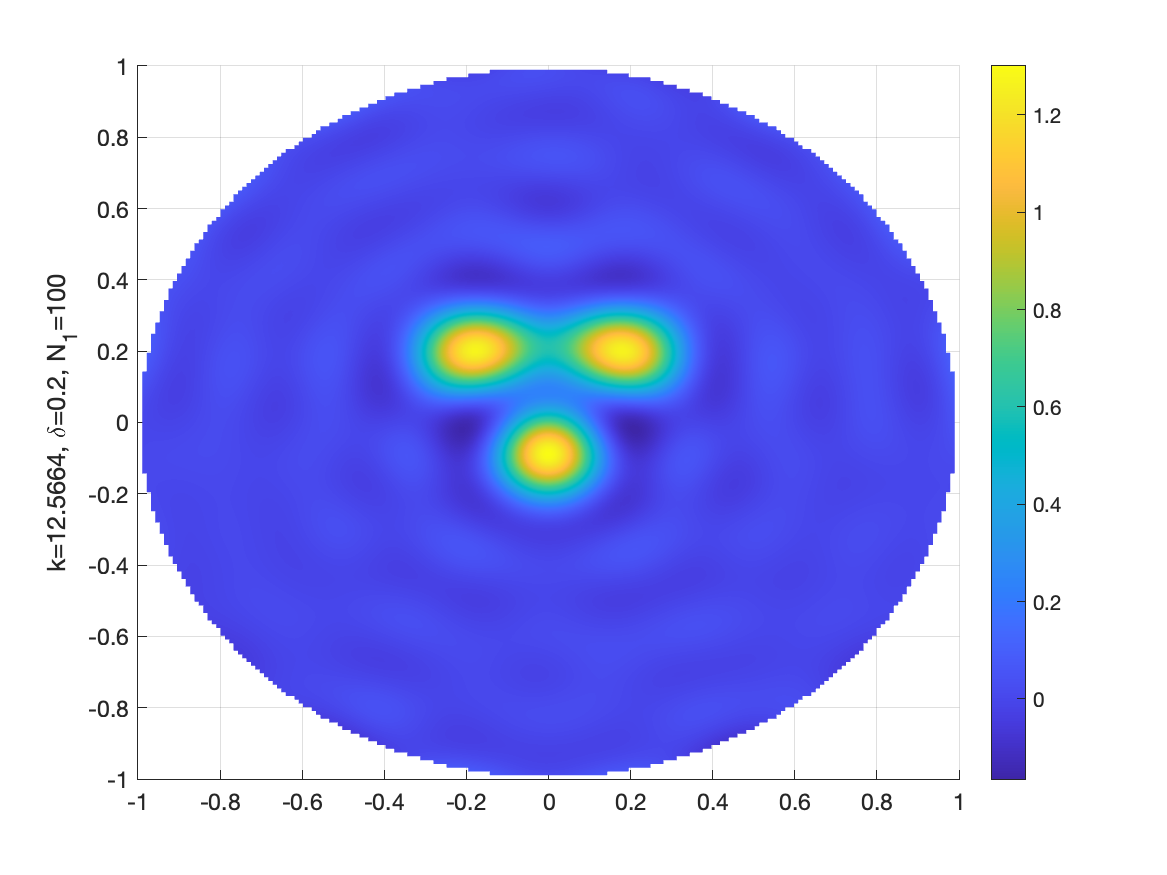}
  }
 \subfloat[Matlab Built-in fft: $k=4\pi$.]
  {
      \includegraphics[width=0.45\linewidth]{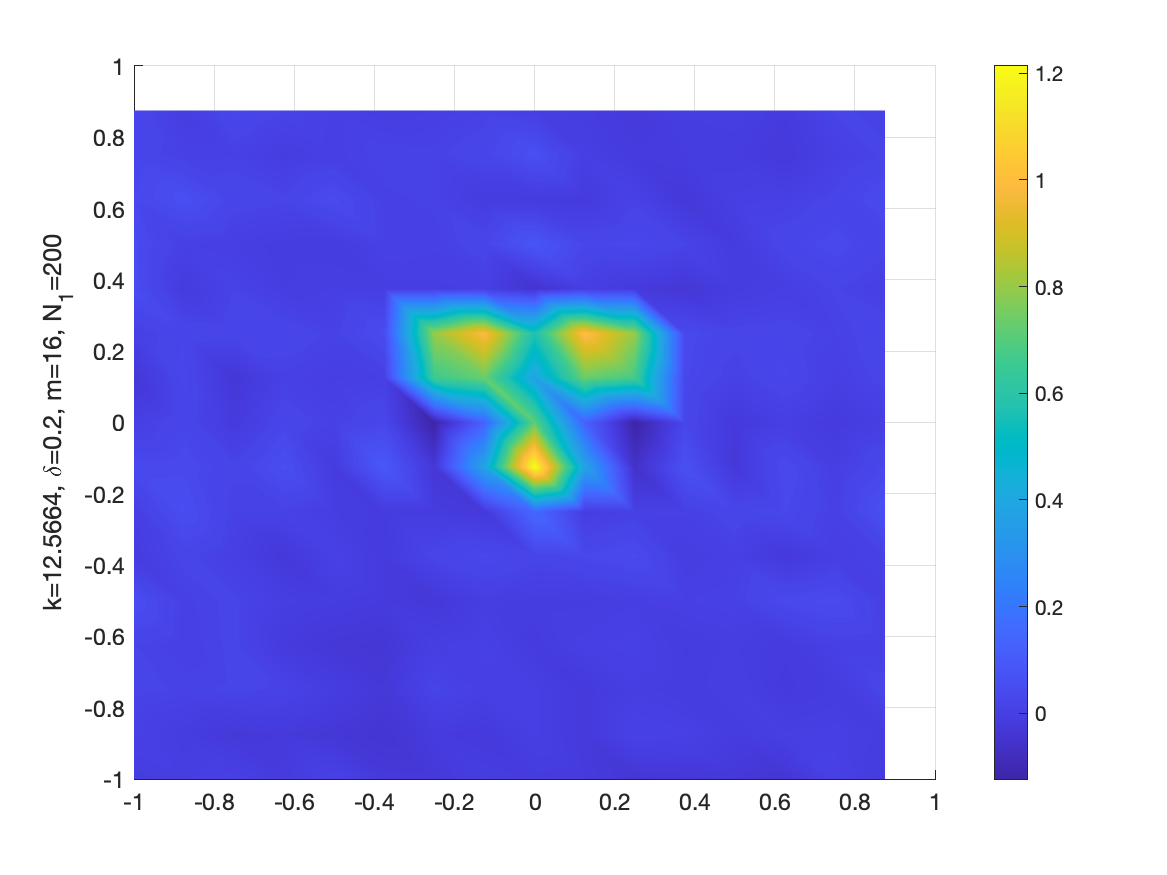}
  }\qquad
 \subfloat[Proposed Method:   $k=10\pi$.]
  {
      \includegraphics[width=0.45\linewidth]{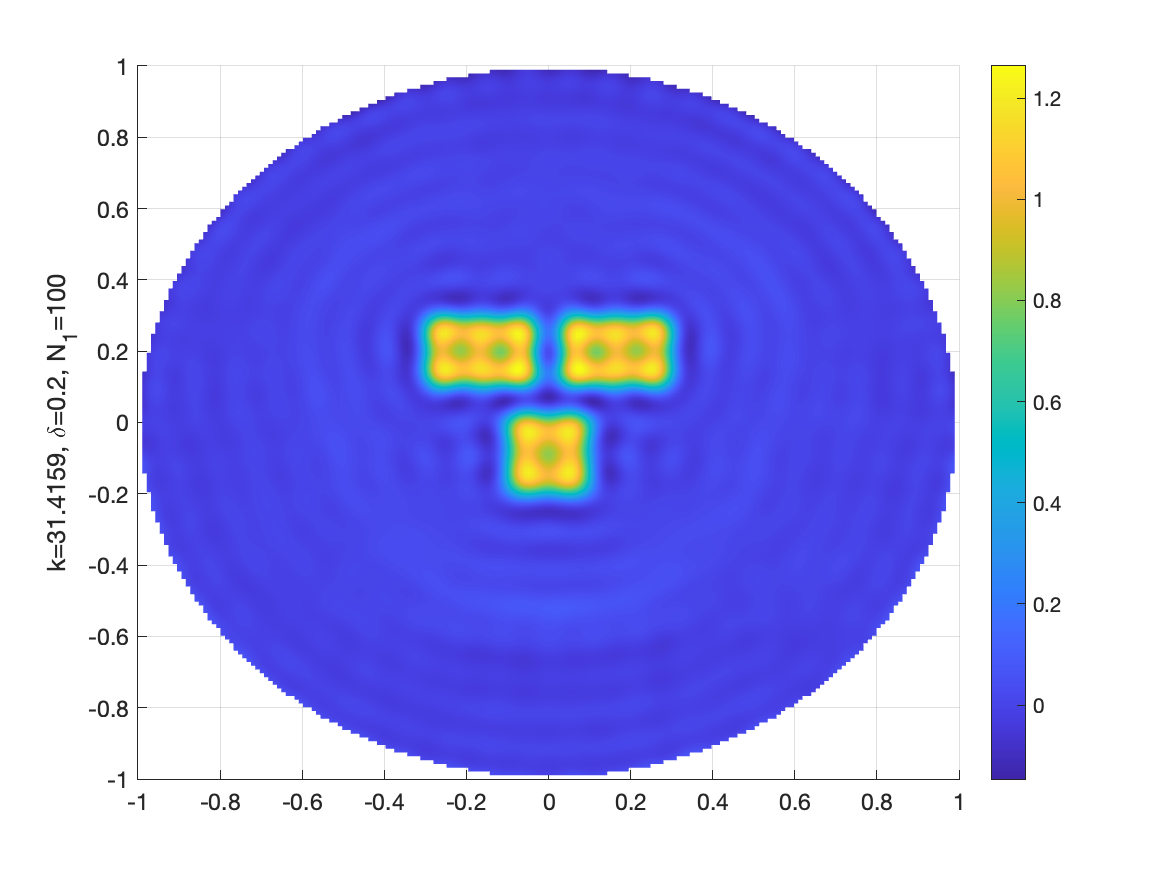}
  }
 \subfloat[Matlab Built-in fft: $k=10\pi$.]
  {
      \includegraphics[width=0.45\linewidth]{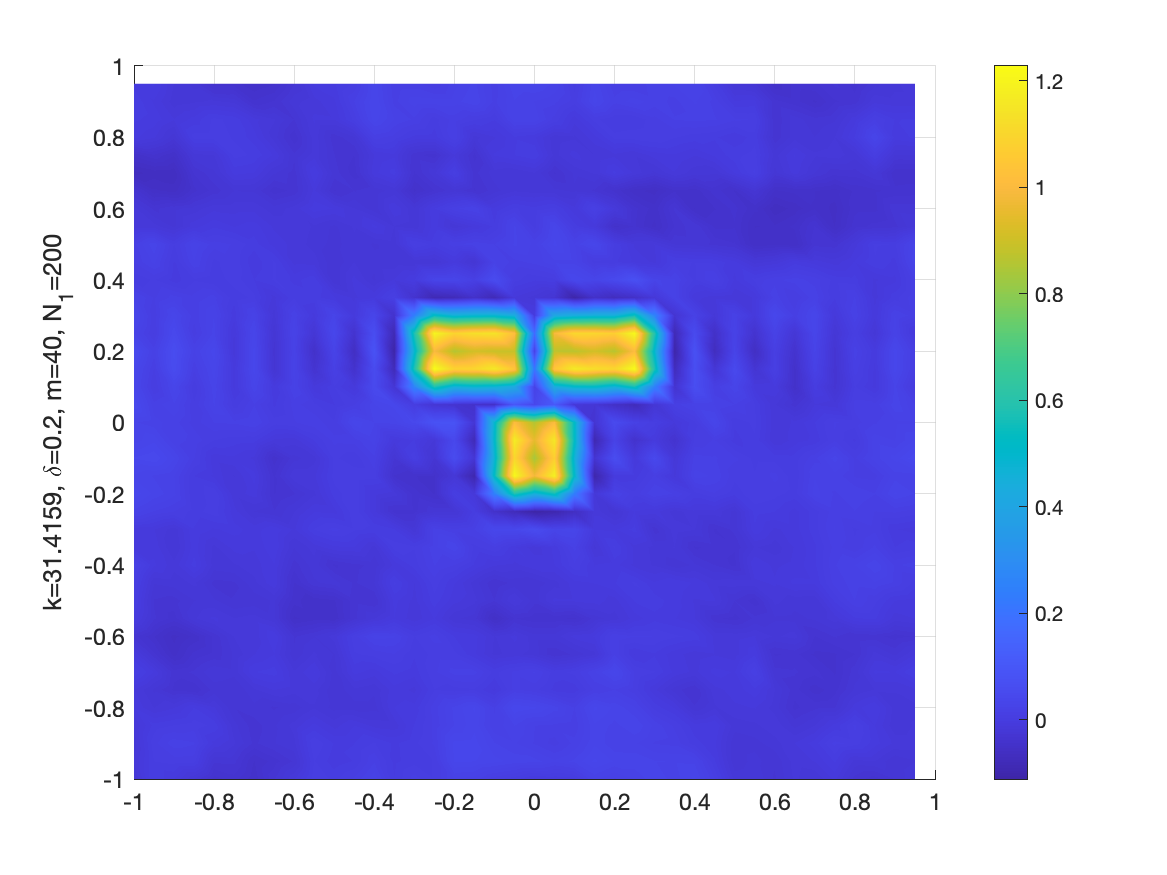}
  }
 \caption{Reconstruction of three rectangles. Noise level $20\%$.
 }
\label{FFT}
\end{figure}

For the full model, we use IPscatt \cite{Burgel19} to generate the exact far-field data $\{u^\infty(\hat{x}_i;\hat{\theta}_j;k)\}_{i=1,j=1}^{N}$, where  $N$ denotes the number of incident directions and observation directions.  The noisy data $\{u^{\infty,\delta}(\hat{x}_i;\hat{\theta}_j;k)\}_{i=1,j=1}^{N}$ are generated by  adding  noise with noise level $\delta$  to the simulated data $\{u^\infty(\hat{x}_i;\hat{\theta}_j;k)\}_{i=1,j=1}^{N}$  
following  \eqref{add noise}. According to \cite{Burgel19}, the relative error between the  Born model  and the full model  is given by
\(
    {\rm rel(k)}=\frac{\|{U}_b^s-{U}^s \|_2}{\|{U}^s\|_2}\),
where ${U}_b^s=(u_{b,j}^s(x_i))_{\rm IncNb\times Np}, {U}^s=(u_{j}^s(x_i))_{\rm IncNb\times Np}$ are two matrices: here $Np$ is the number of nodes in a fixed domain $\tilde{\Omega} = (- \sqrt{2}/2,\sqrt{2}/2)\times (-\sqrt{2}/2,\sqrt{2}/2)$ (here we  follow \cite{Burgel19} to choose $\tilde{\Omega}$)  and ${\rm IncNb}$ is the number of incident directions;  the subindex $j$ represents that the scattered wave is due to an incident wave at direction $\hat{\theta}_j$.

Note that the simulation of the far-field data of \cite{Burgel19} makes no use of disk PSWFs, therefore the possibility of inverse crimes is avoided. {In the following experiments, the spectral cutoff is set as $\epsilon=0.9 |\alpha_{0,0}(c)|$ to incorporate  that no a prior information on the relative modeling error is known.}

 \begin{figure}[htp]
\centering
  \subfloat[Ground-truth]
  {
      \includegraphics[width=0.45\linewidth]{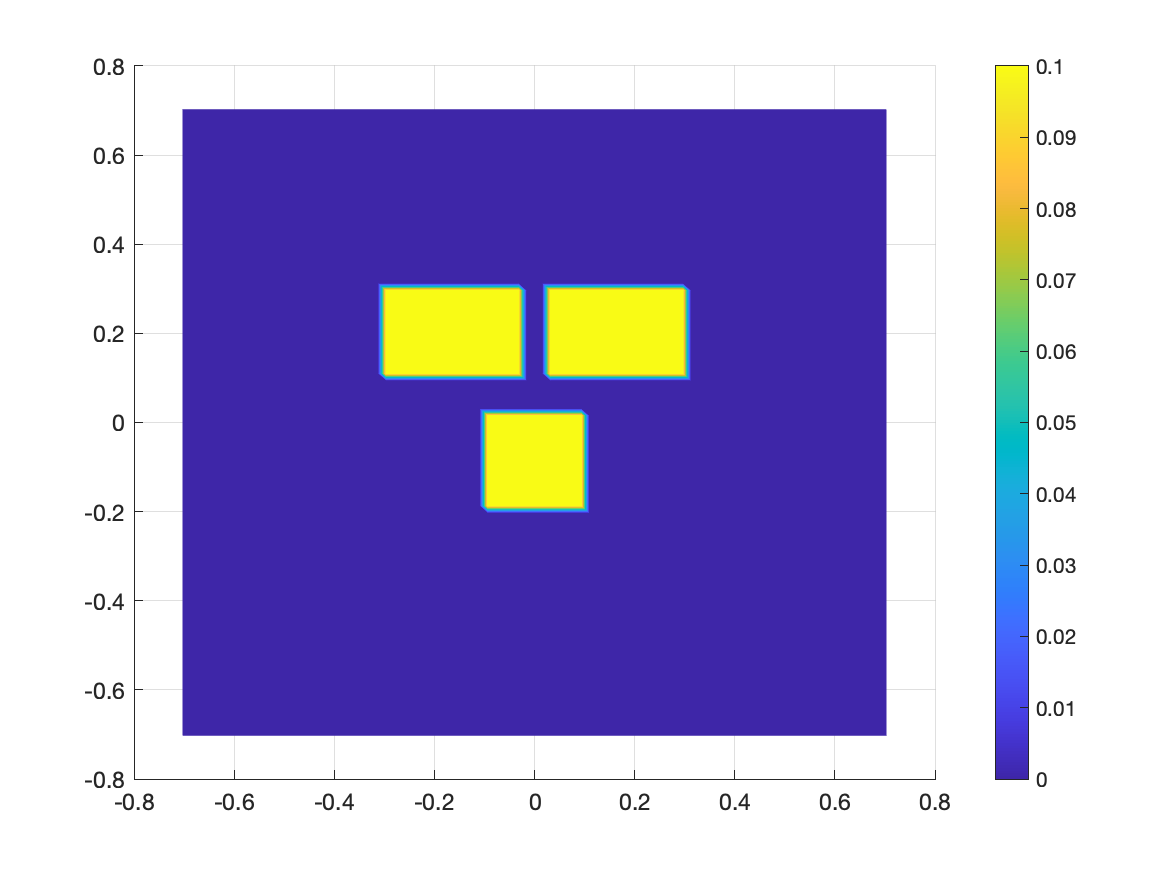}
  }
 \subfloat[k=15,  ${\rm rel(k)} = 0.17138$]
  {
      \includegraphics[width=0.45\linewidth]{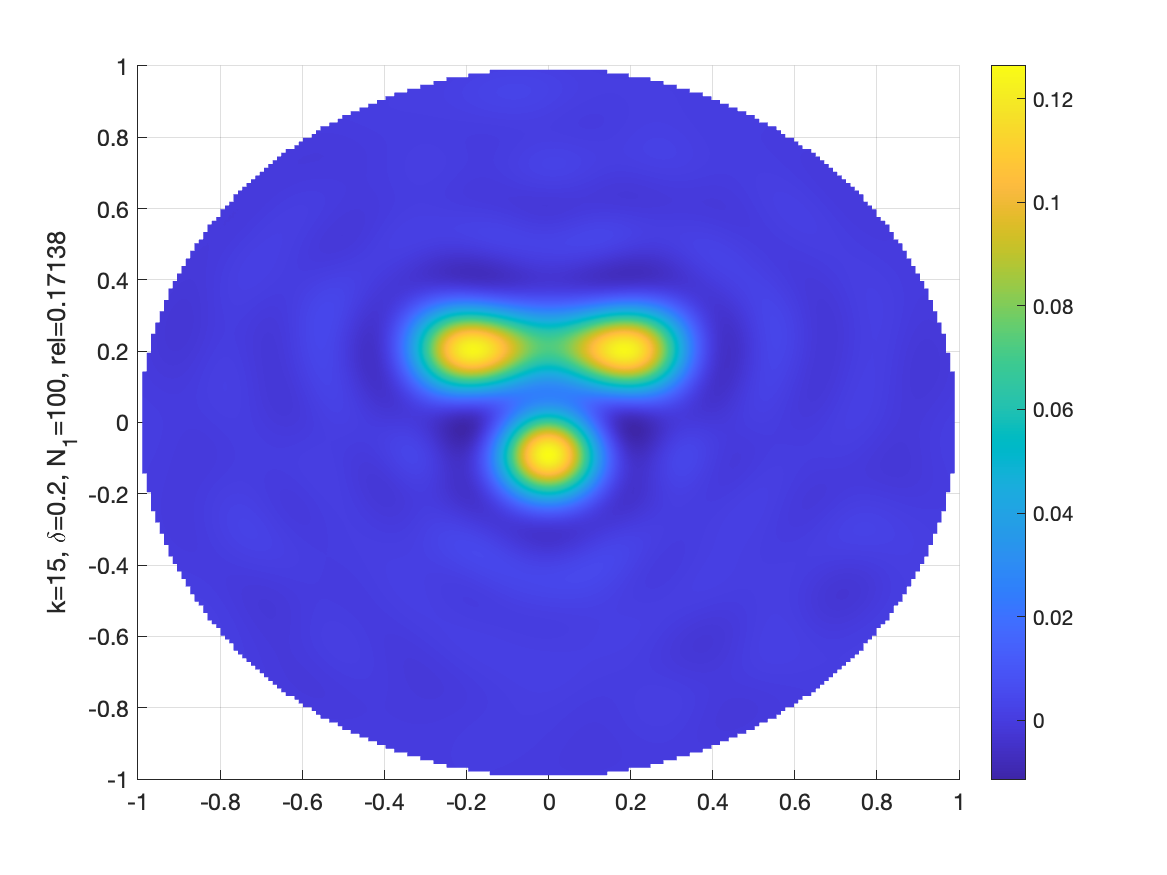}
  } \\
 \subfloat[k=30,  ${\rm rel(k)} = 0.39434$]
  {
      \includegraphics[width=0.45\linewidth]{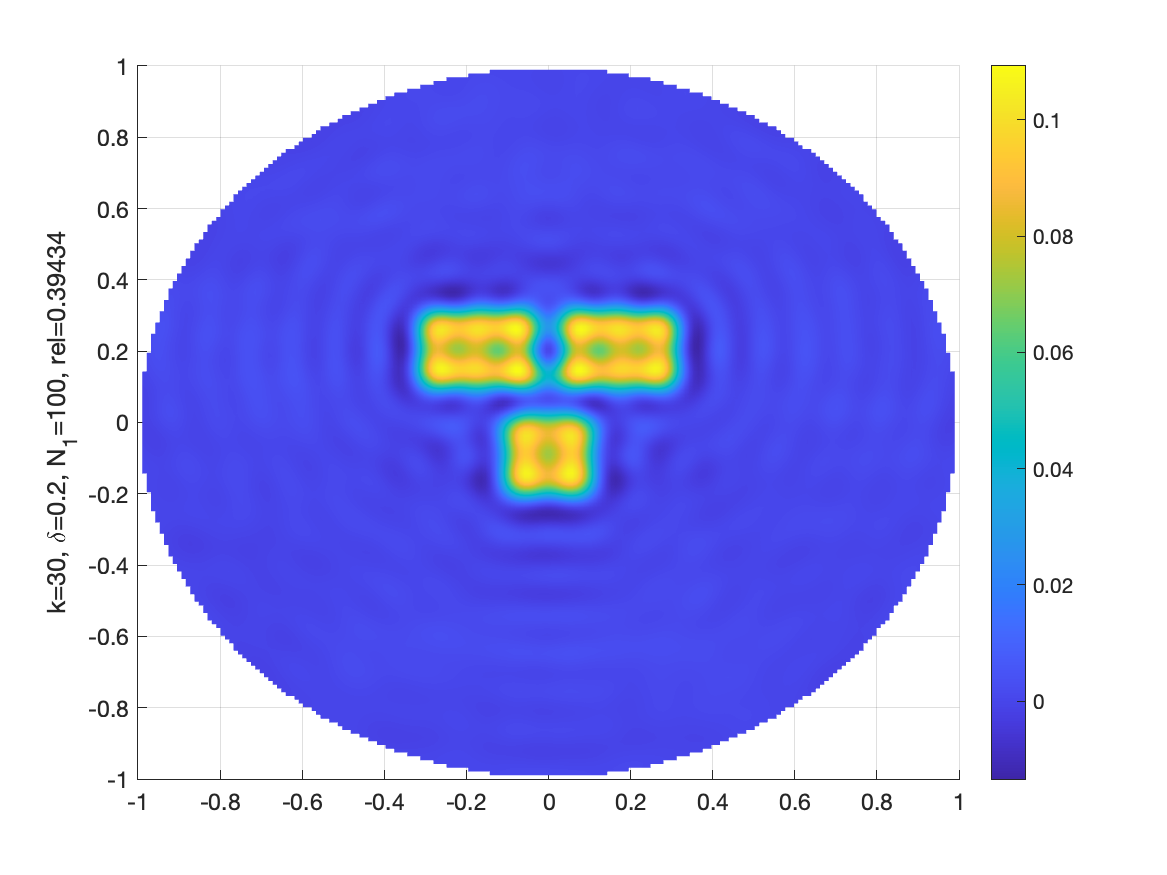}
  }
 \subfloat[k=45,   ${\rm rel(k)} = 0.55556$]
  {
      \includegraphics[width=0.45\linewidth]{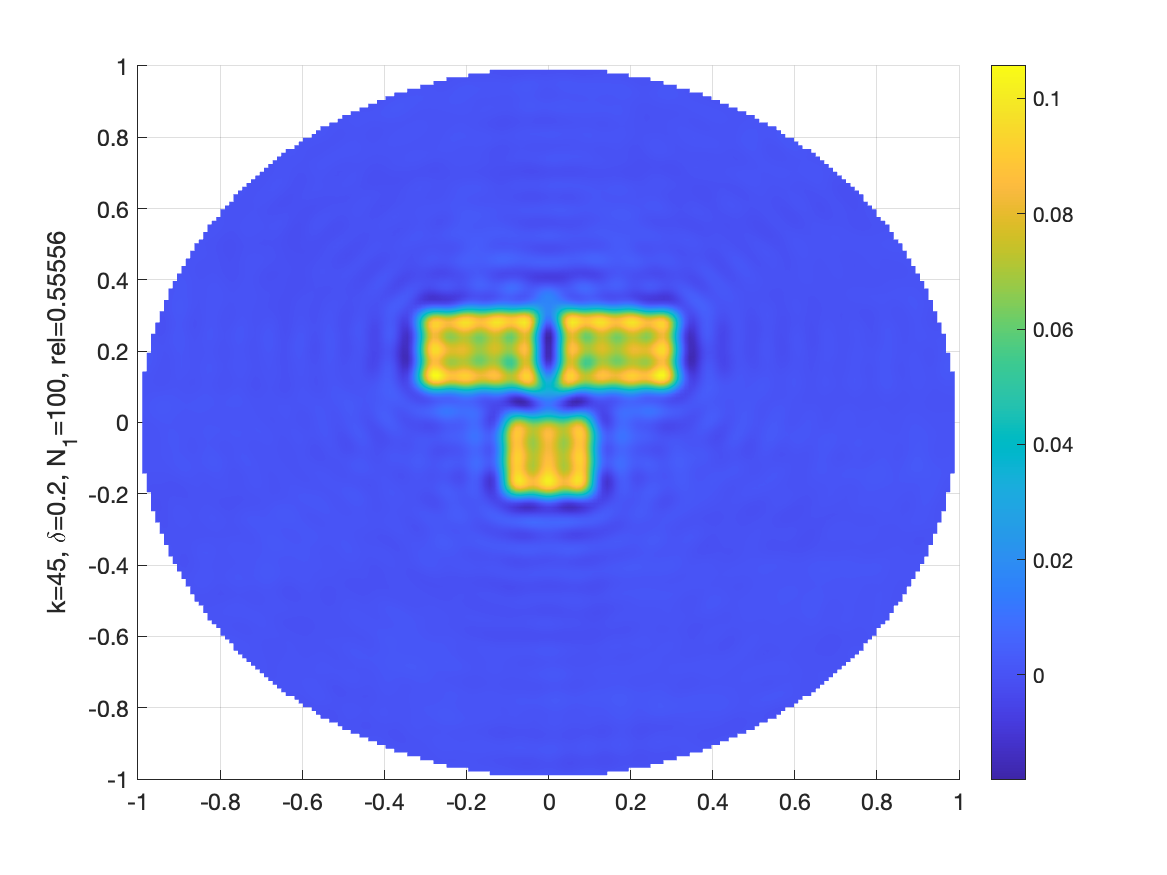}
  }

 \caption{Reconstruction of three rectangles using our proposed method and $20\%$ noisy full far-field data for $k=15,~30,~45$.}
 \label{increasing stability1}
\end{figure}

\subsubsection{Increasing stability} \label{section: increasing stability}
In this section, we test our algorithm by adding  $20\%$ uniform noises to the full far-field data simulated by 
IPscatt \cite{Burgel19}.  The number of incident and observation directions are fixed as $N_1=N_2={ 100}$. 
The modeling error is given by $ {\rm rel(k)}$ for different wavenumber $k$.

We begin with an experiment with $q$ supported in three rectangles. \Cref{increasing stability1} displays 
the reconstructions of the three rectangles at wavenumbers $k \in \{15, 30, 45\}$. Increasing the value of 
wave number $k$ leads to better reconstruction under the premise that the relative modeling error is not too large.

Next we consider a more complicated example where the ground-truth of the unknown  $q$ is given by \Cref{increasing stability2} and can be generated by  Matlab function ``ship2D.m'' in IPscatt \cite{Burgel19}. The distance between two small squares is $1/16$.    Again we have observed the resolution becomes better as the { wave number} $k$ gets larger.

\begin{figure}[htp]
\centering
  \subfloat[Ground-truth, real part]
  {
      \includegraphics[width=0.45\linewidth]{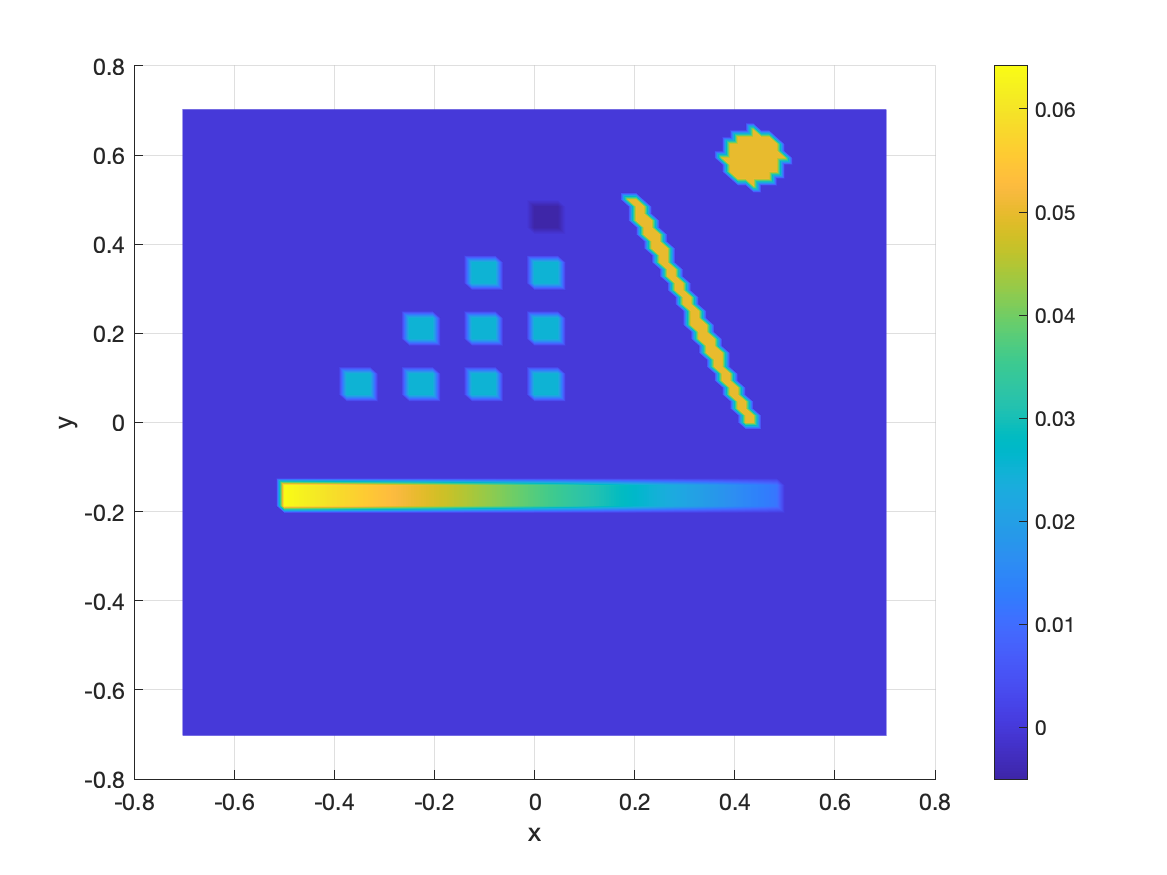}
  }
    \subfloat[Ground-truth, imaginary part]
  {
      \includegraphics[width=0.45\linewidth]{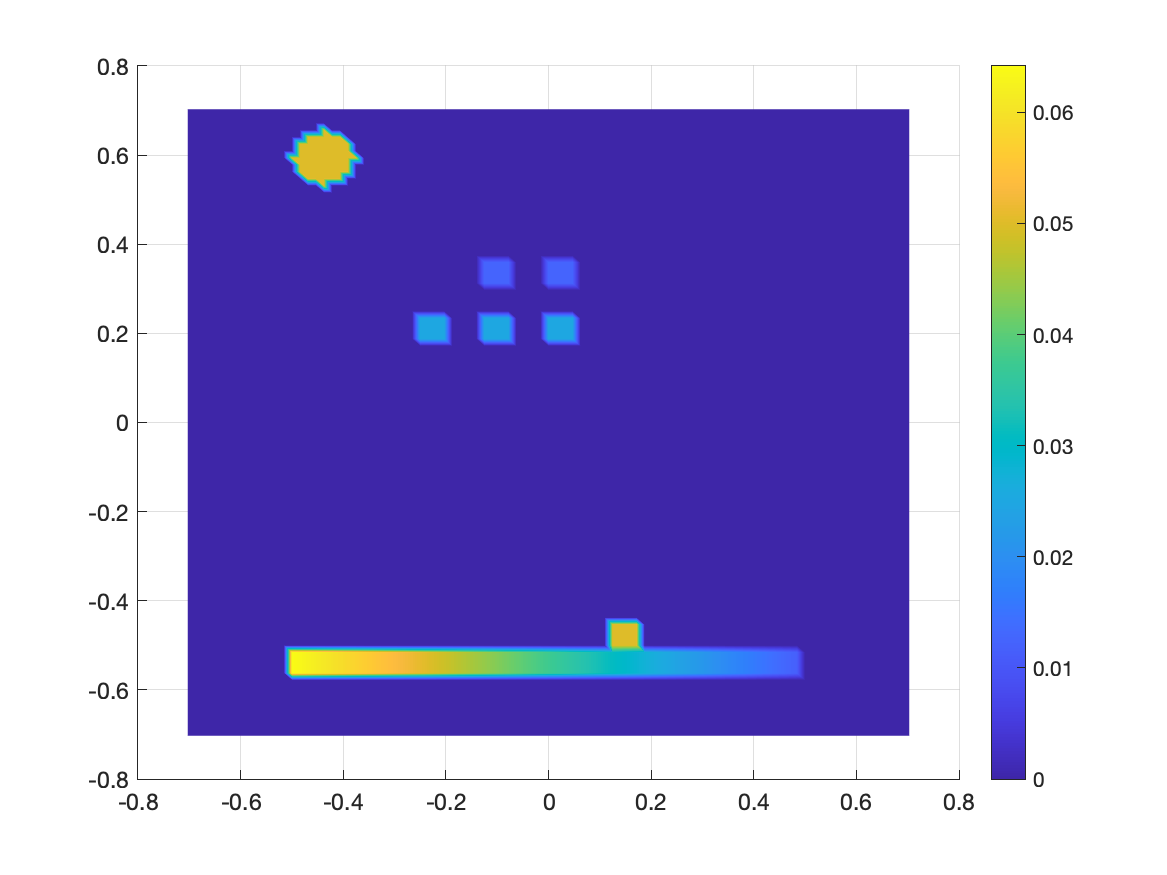}
  }\\
 \subfloat[k=15,  ${\rm rel(k)} = 0.046737$, real part]
  {
      \includegraphics[width=0.45\linewidth]{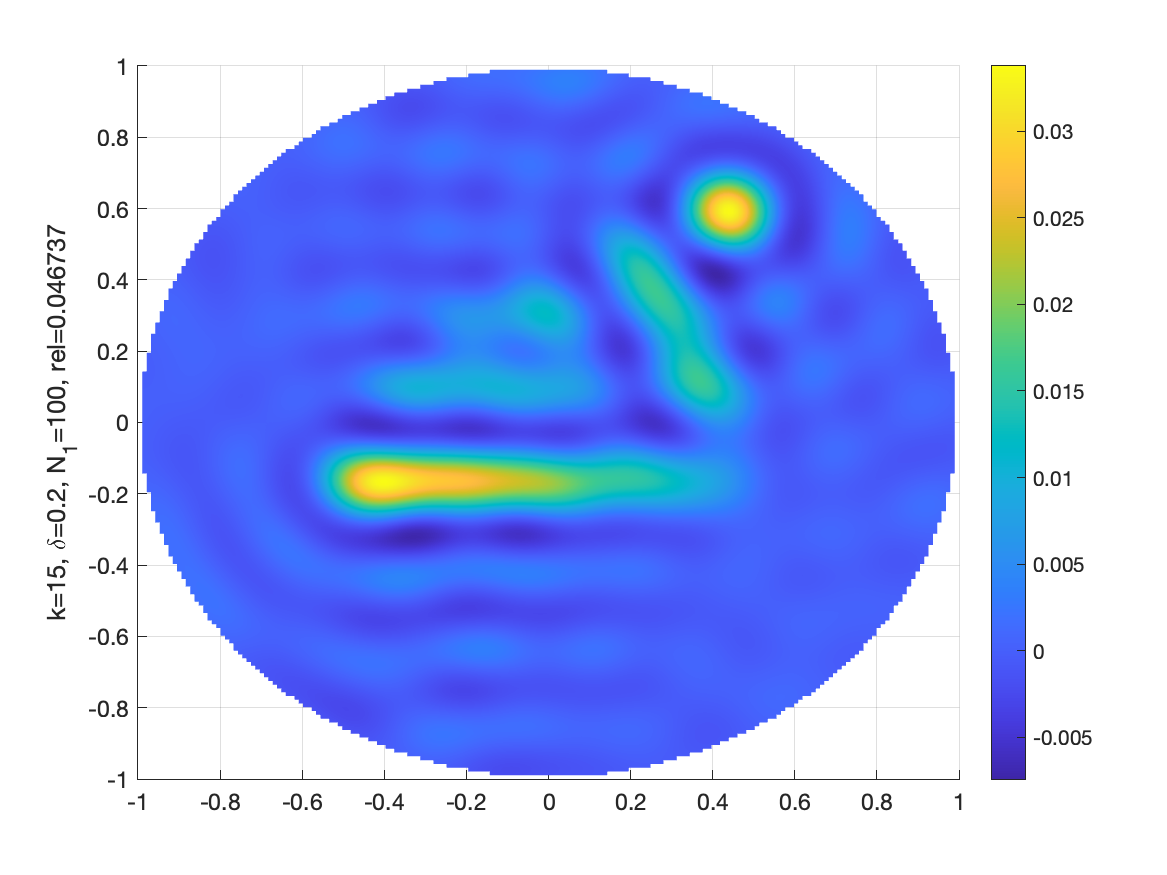}
  }
   \subfloat[k=15, imaginary part]
  {
      \includegraphics[width=0.45\linewidth]{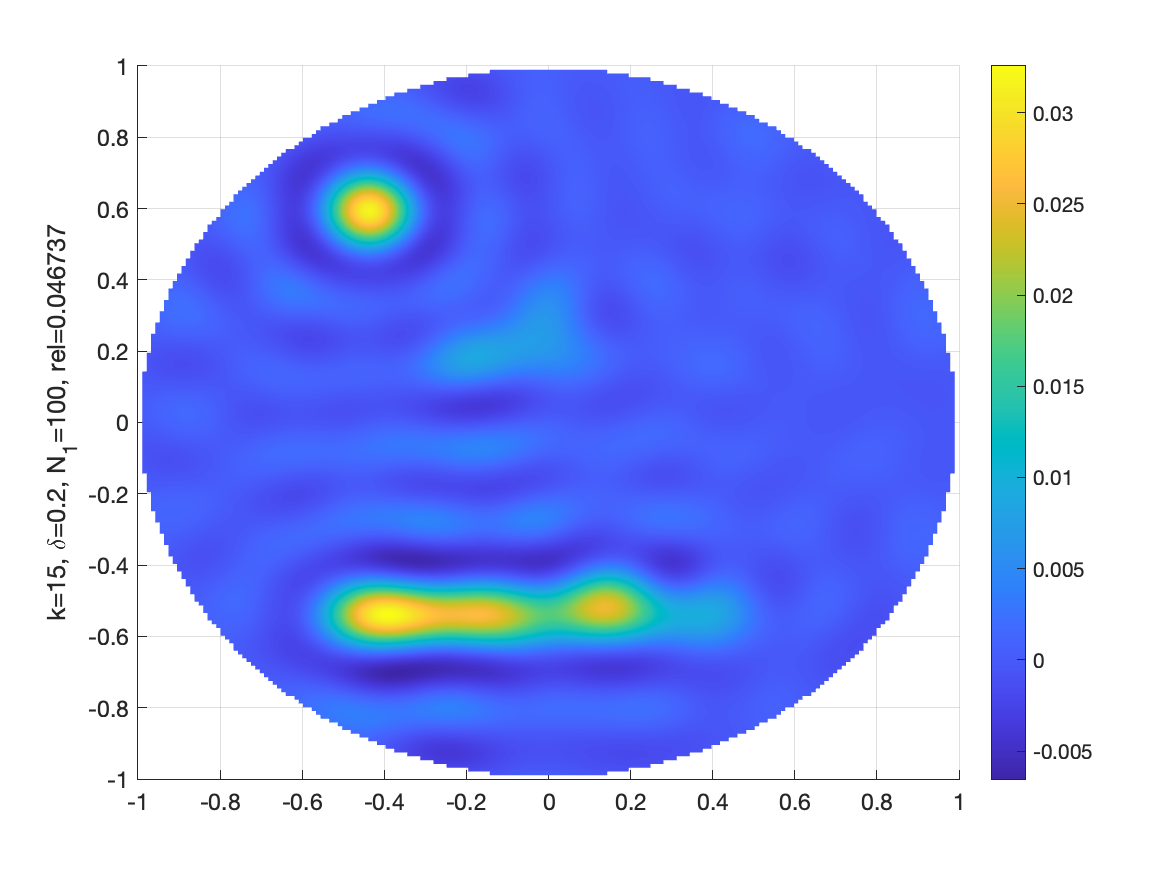}
  }\\
 \subfloat[k=30,  ${\rm rel(k)} = 0.10405$, real part]
  {
      \includegraphics[width=0.45\linewidth]{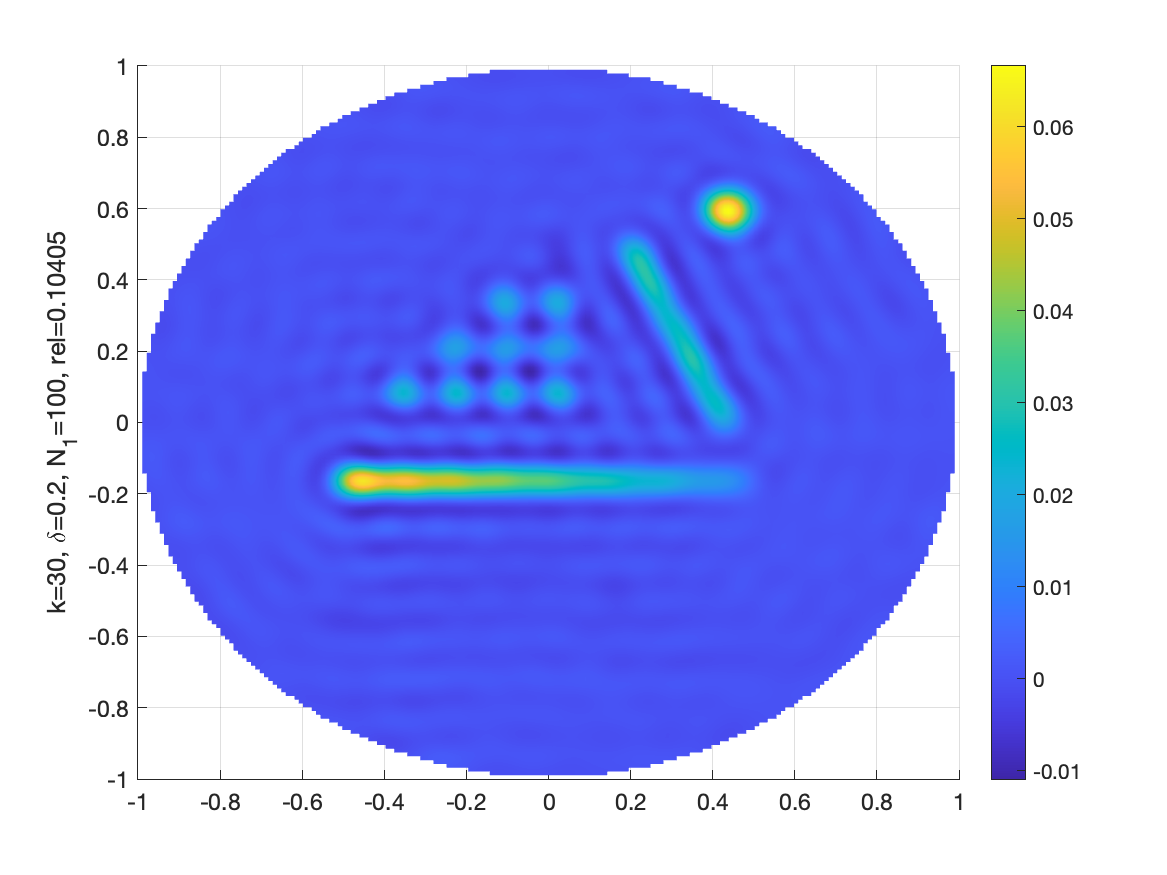}
  }
   \subfloat[k=30, imaginary part]
  {
      \includegraphics[width=0.45\linewidth]{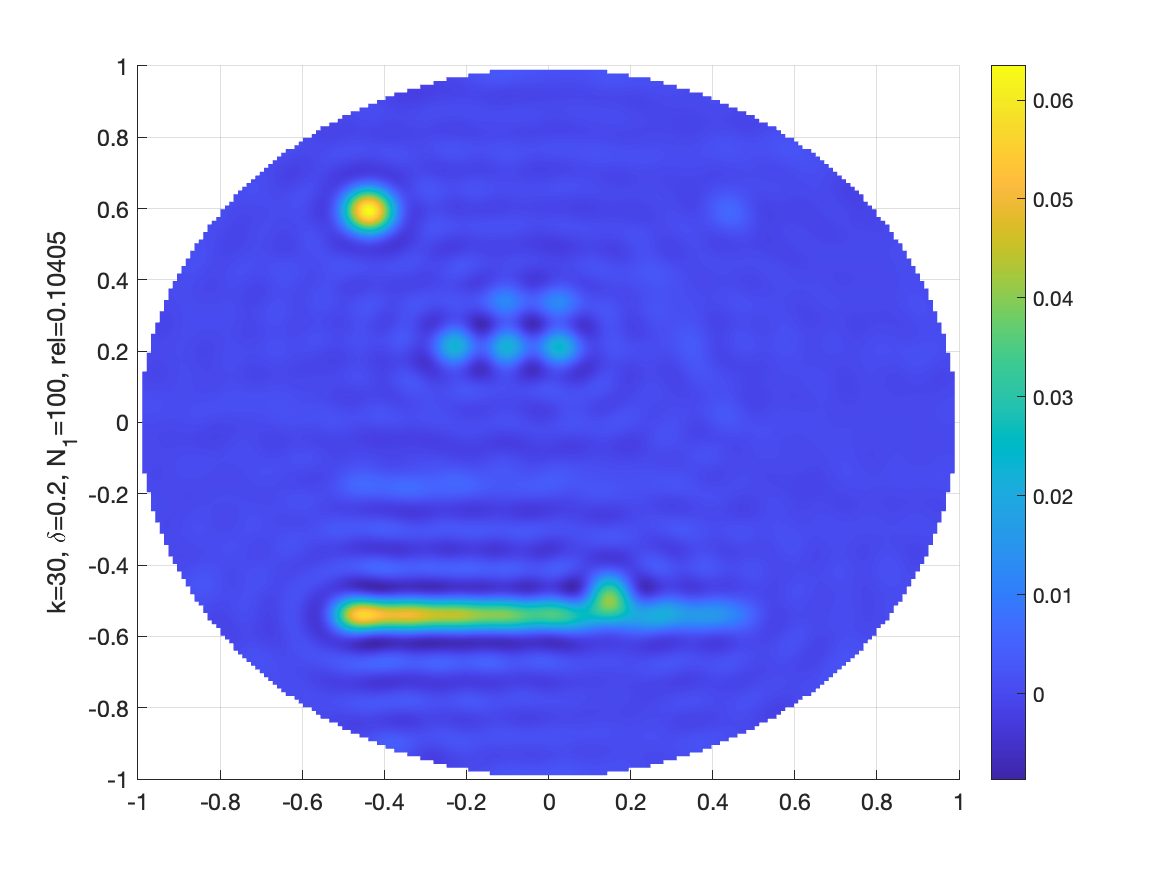}
  } \\
 \subfloat[k=45,   ${\rm rel(k)} = 0.17376$, real part]
  {
      \includegraphics[width=0.45\linewidth]{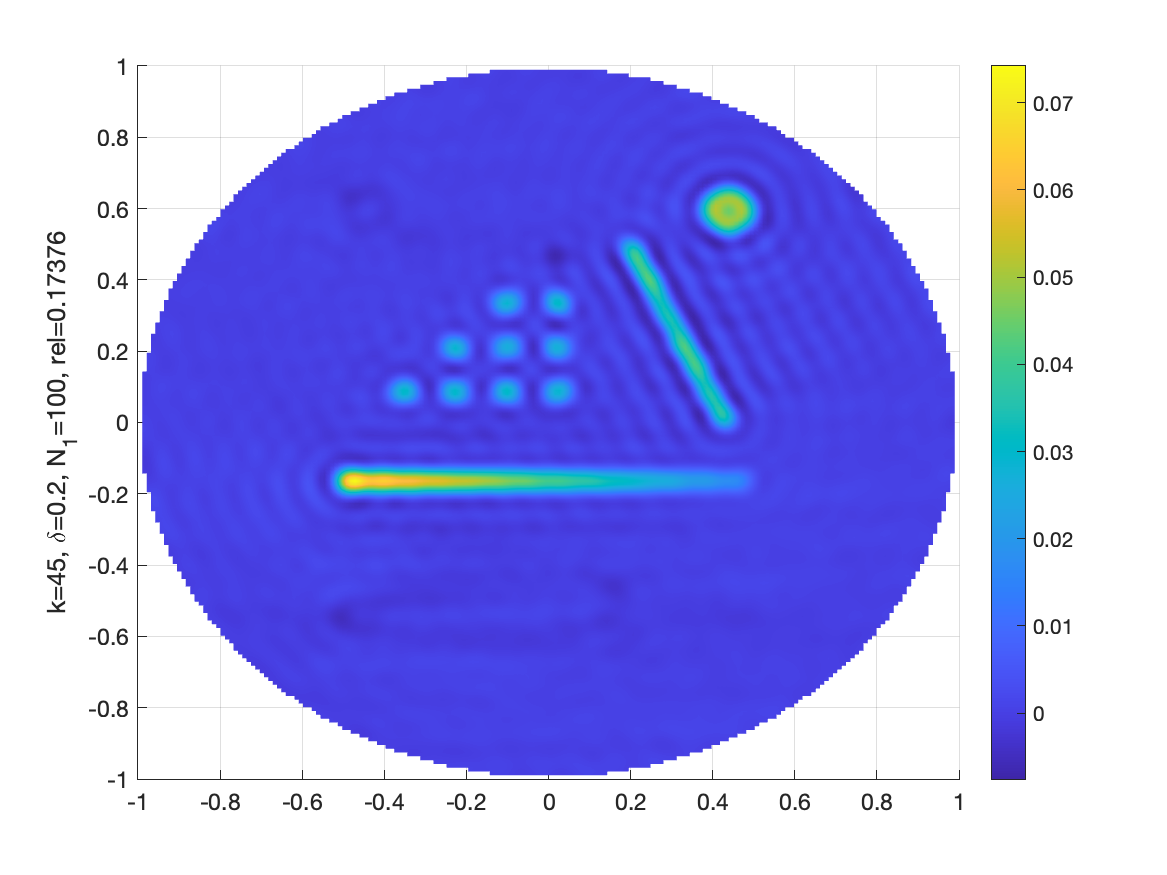}
  }
    \subfloat[k=45, imaginary part]
  {
      \includegraphics[width=0.45\linewidth]{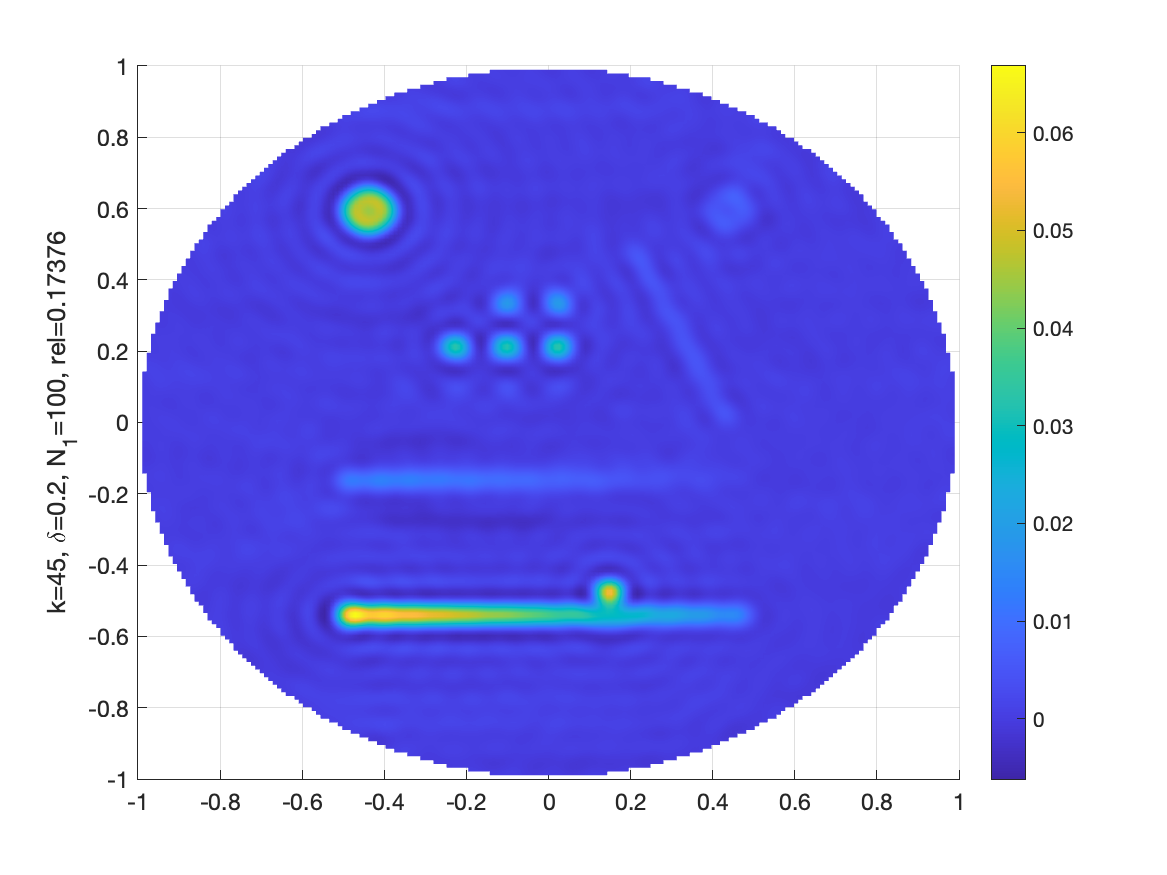}
  }
 \caption{Reconstruction of ``ship2D'' contrast using $20\%$ noisy full far-field data for $k=15,~30,~45$. Left column:  real part, right column: imaginary part.}
 \label{increasing stability2}
\end{figure}

We continue to test the algorithm when the relative modeling error is not small by choosing $k \in \{ 60, 75\}$. In this case, {  we chose to use  a larger data set $N_1\times N_2=200\times 200$ since the wave number is much larger.} %
 The reconstruction remains good when the relative modeling error is around $35\%$.
\begin{figure}[htp]
\centering
 \subfloat[k=60,~ ${\rm rel(k)}=0.25466$, real part]
  {
      \includegraphics[width=0.45\linewidth]{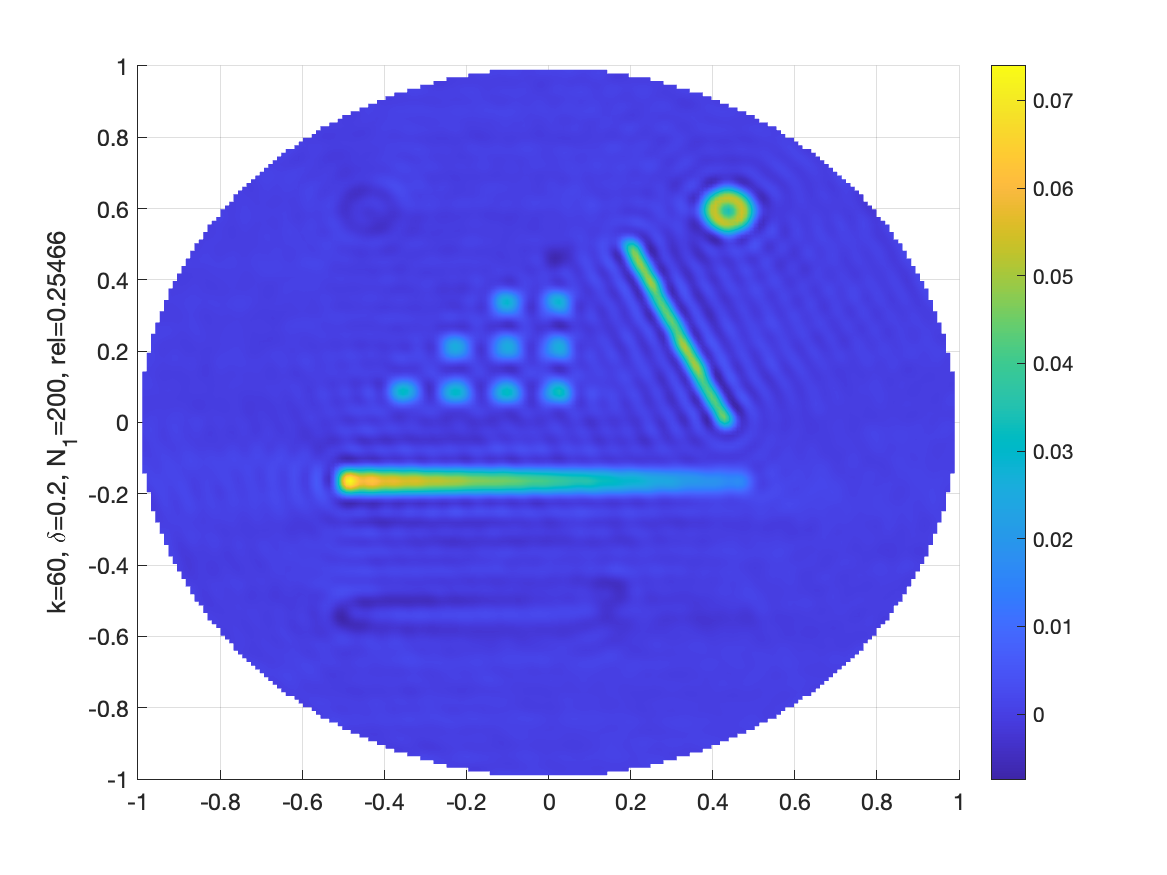}
  }
   \subfloat[k=60, imaginary part]
  {
      \includegraphics[width=0.45\linewidth]{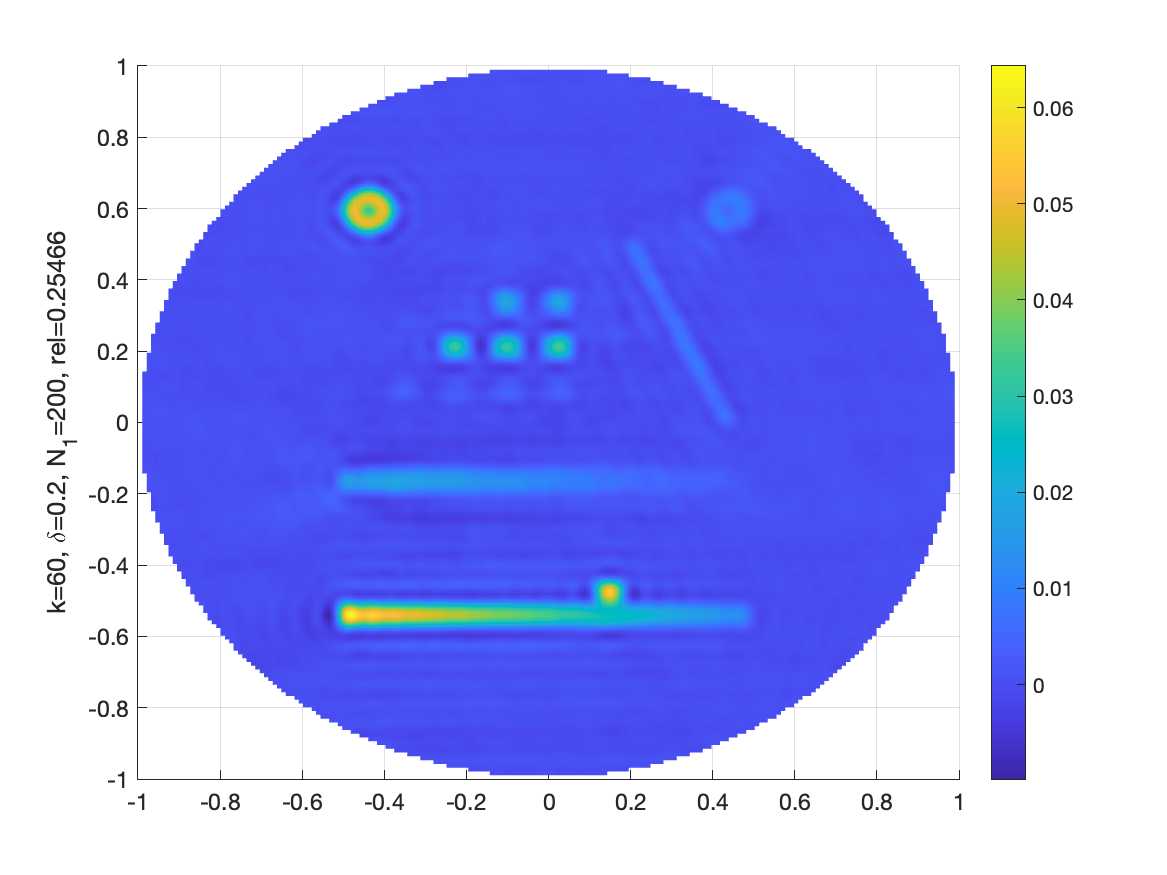}
  }\\
 \subfloat[k=75,  ~${\rm rel(k)}=0.34504$, real part]
  {
      \includegraphics[width=0.45\linewidth]{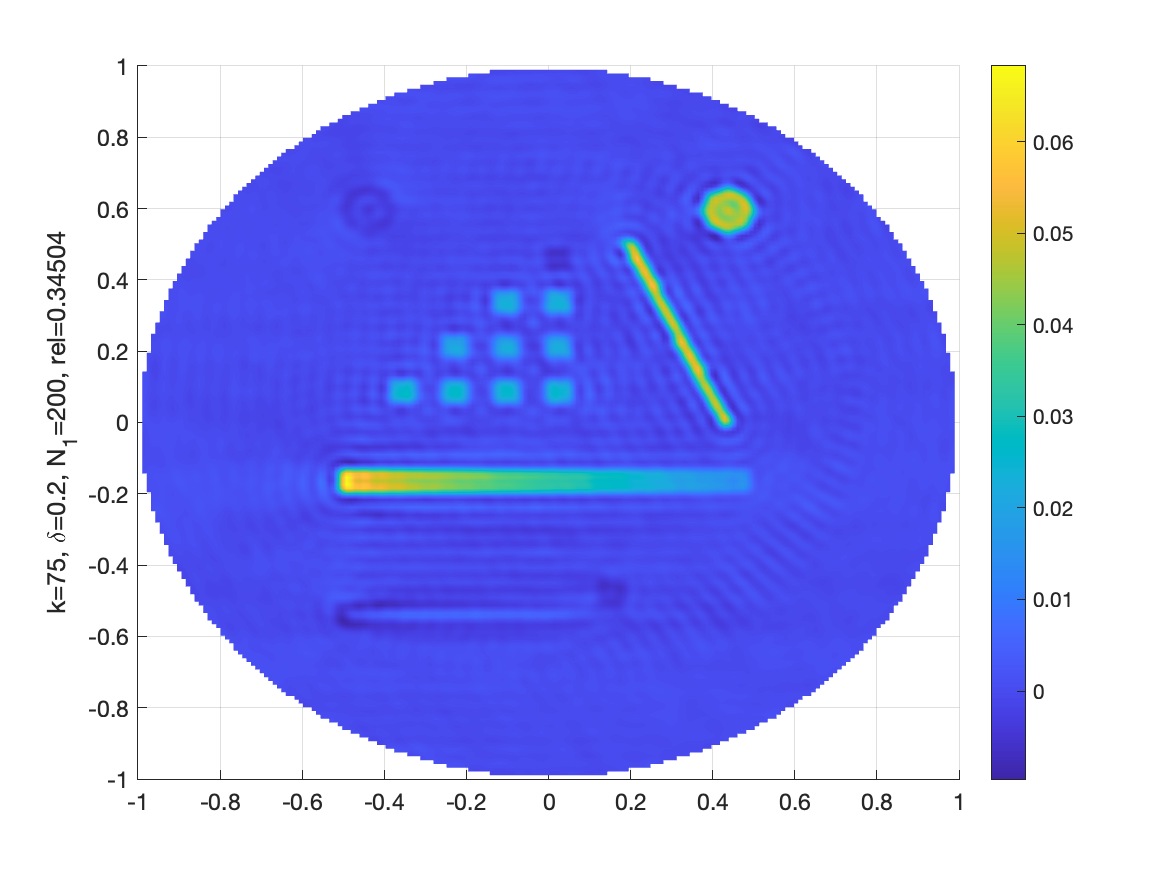}
  }
   \subfloat[k=75, imaginary part]
  {
      \includegraphics[width=0.45\linewidth]{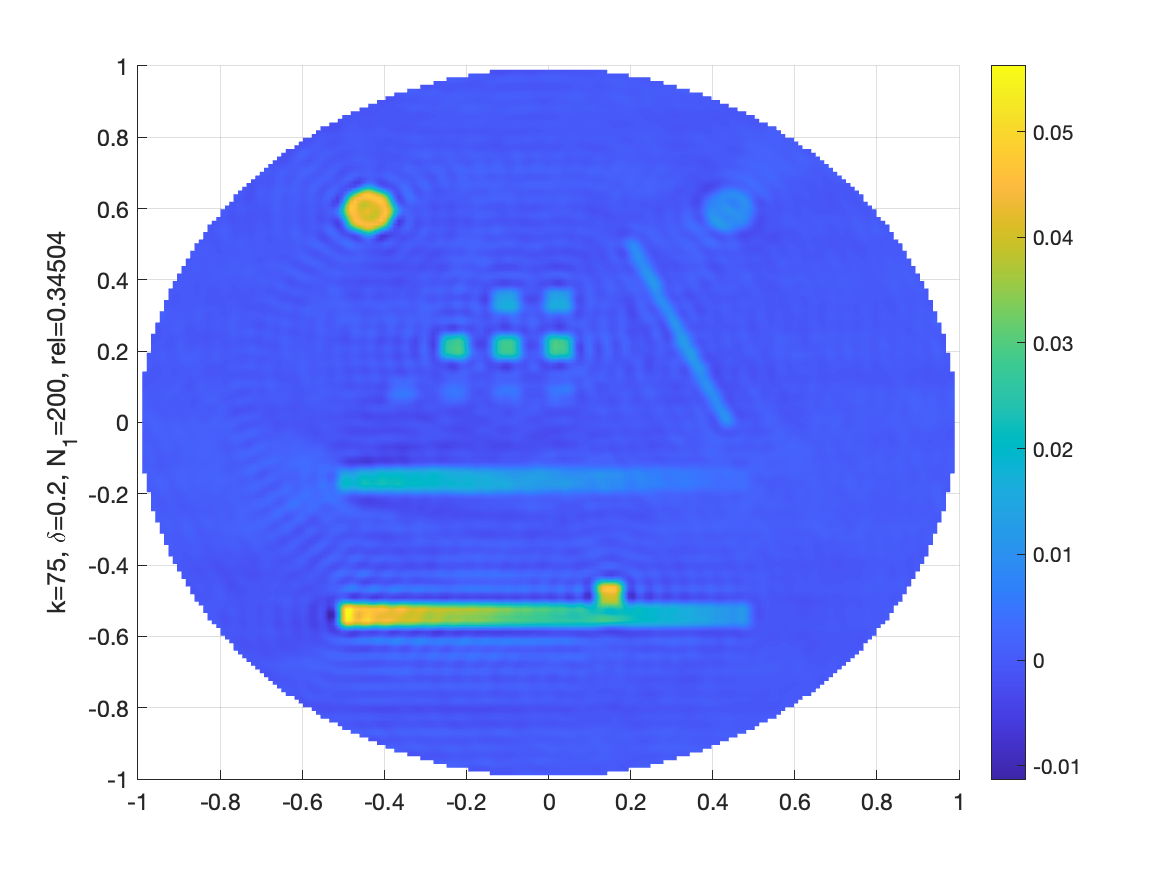}
  }
 \caption{Reconstruction using $20\%$ noisy full far-field data for $k \in \{ 60,   75\}$.}
\label{increasing stability}
\end{figure}

\subsubsection{Comparison with an iterative reconstruction method}
In this section we compare our algorithm with IPscatt's iterative method. Iterative methods usually depend on the choice of initial guess, are computationally expensive, and may suffer from local minima. In \Cref{IPscattComp}, the initial guess of IPscatt's iterative method is zero, and we run the iterative method with the number of iterations ${\rm pdaN}=100$. We also report that the result with number of iterations ${\rm pdaN}=50$ is similar. Our algorithm does not require any initial guess and the numerical experiments show that it gives very good reconstructions for relative modeling error less than $50\%$.
Our algorithm is a direct method which is more efficient than the iterative method.
However, we note that our algorithm does not work for very large modeling error, say larger than $100\%$. One future direction is to utilize our algorithm with other nonlinear techniques to deal with nonlinear model with very large modeling errors.

\begin{figure}[htp]
\centering
  \subfloat[Proposed Method, real part]
  {
      \includegraphics[width=0.45\linewidth]{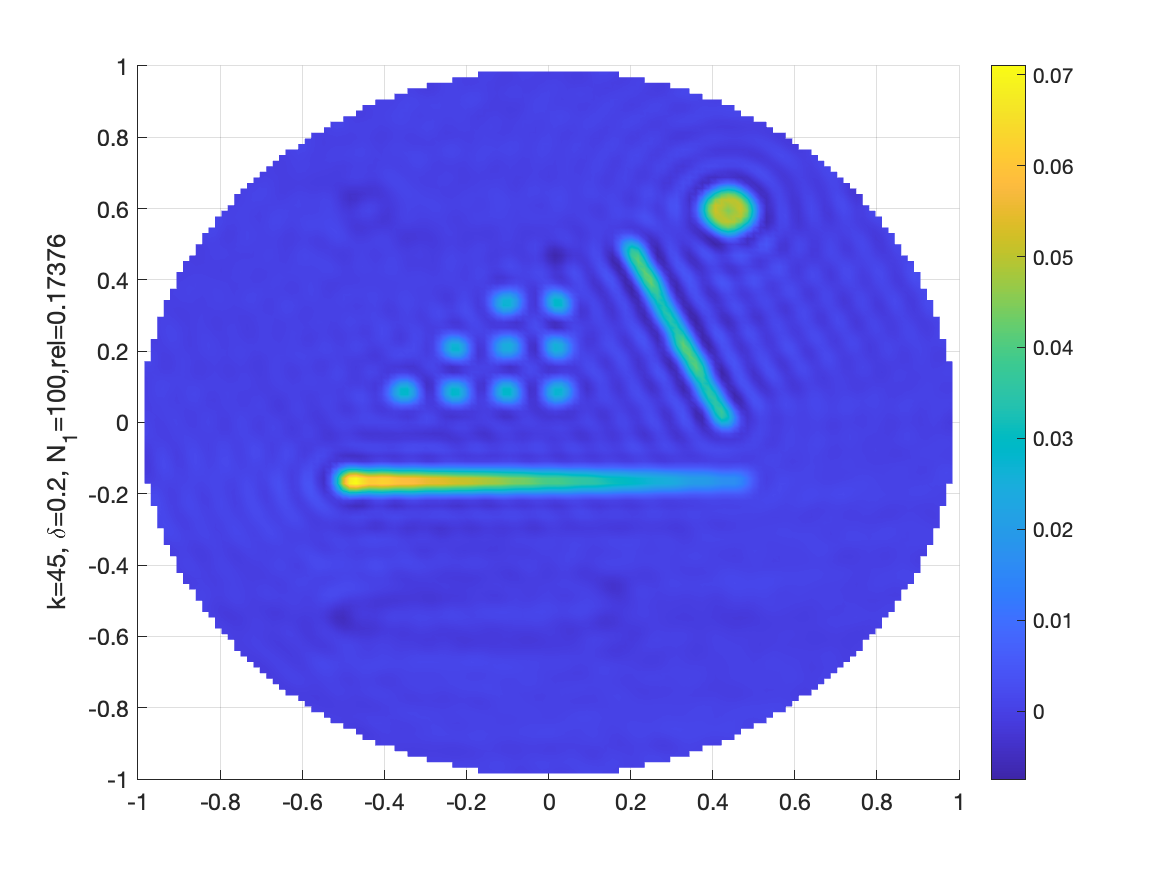}
  }
 \subfloat[IPscatt's Iterative Method, real part]
  {
      \includegraphics[width=0.45\linewidth]{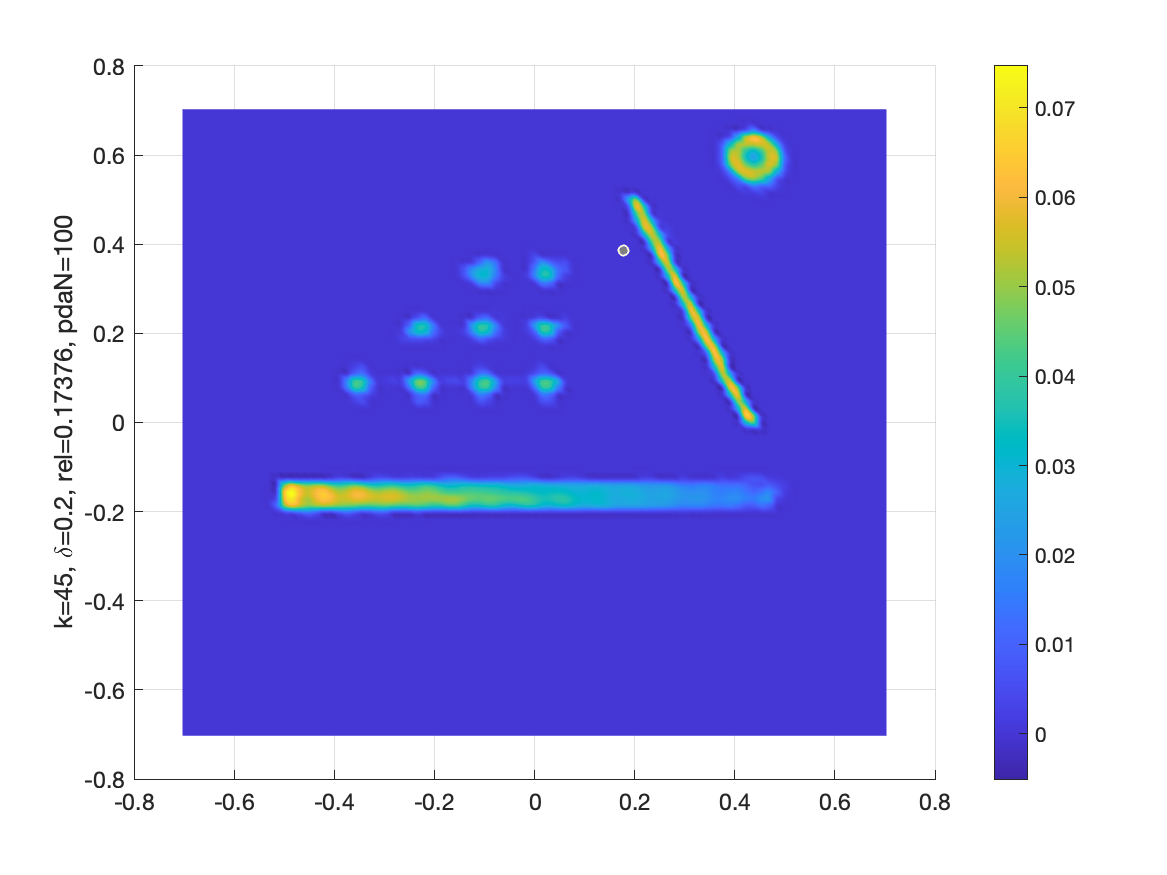}
  } \\
  \subfloat[Proposed Method, imaginary part]
  {
      \includegraphics[width=0.45\linewidth]{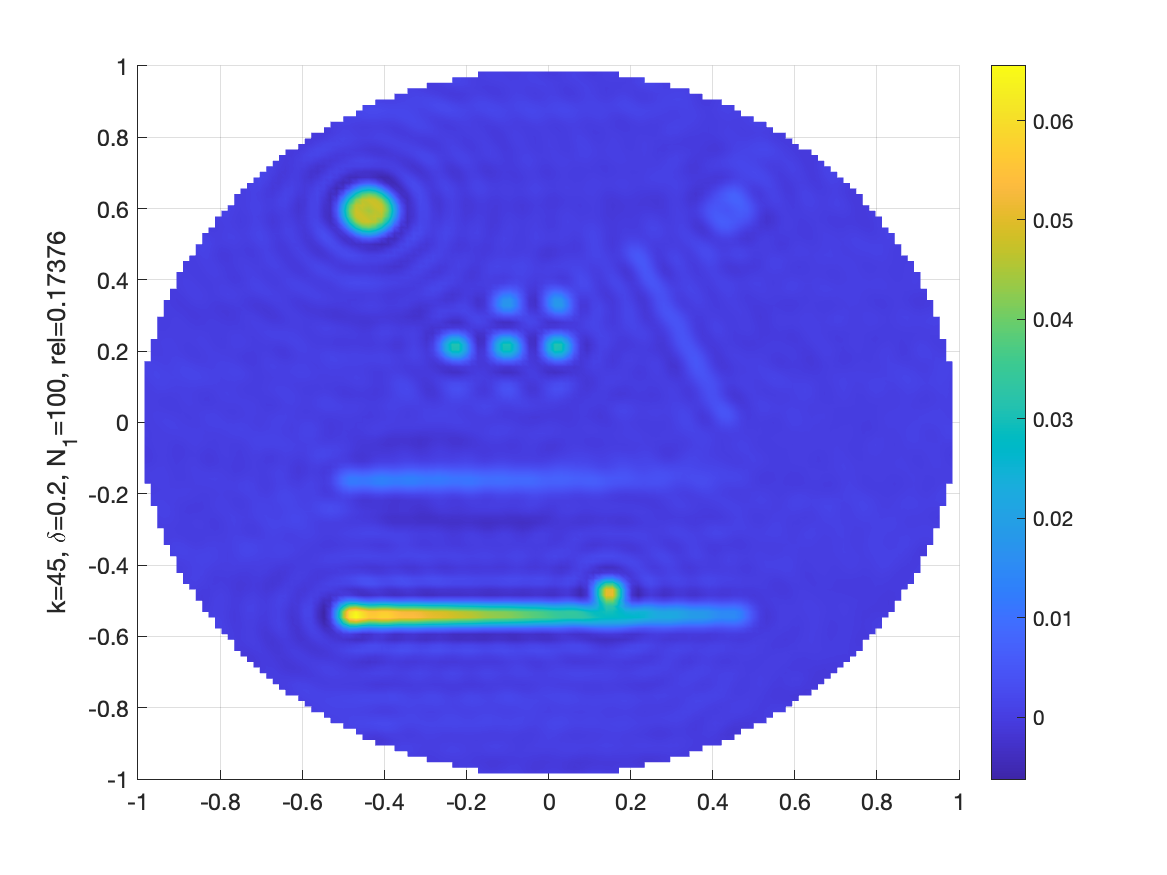}
  }
 \subfloat[IPscatt's Iterative Method, imaginary part]
  {
      \includegraphics[width=0.45\linewidth]{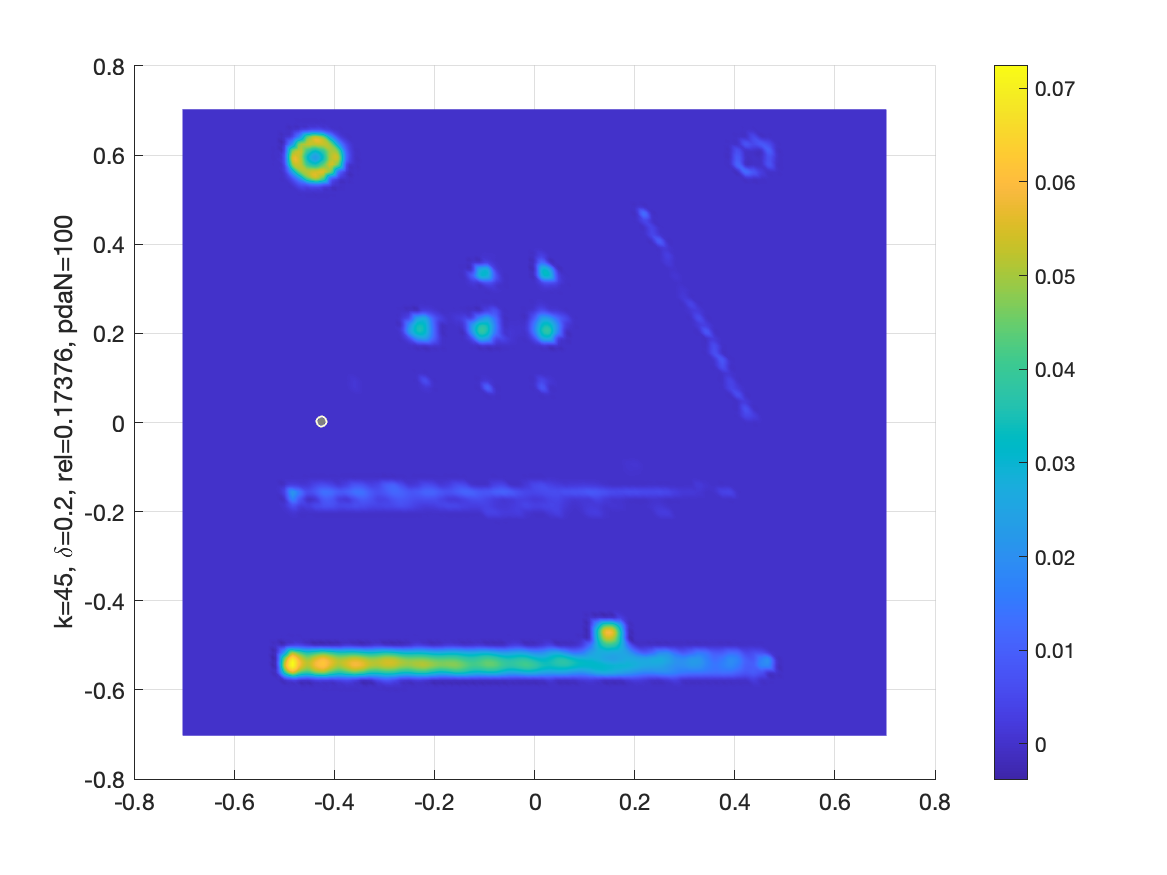}
  }

 \caption{Reconstruction of ``ship2D'' contrast using the our proposed algorithm and IPscatt's iterative method with $k=45$.}
\label{IPscattComp}
\end{figure}

{ 
\subsubsection{Far-field data extrapolation}
When the data set $\{u^\infty(\hat{x}_i;\hat{\theta}_j;k)\}_{i=1,j=1}^{N_1,N_2}$ is of small scale, one idea is to perform Fourier interpolation to have an extrapolated data set of dimension $\tilde{N}_1\times \tilde{N}_2$ such that $\tilde{N}_1, \tilde{N}_2$ are larger  and then apply our proposed algorithm. In Figure \ref{figure: Fourier interpolation}, we use fft interpolation to extrapolate the far-field data and then apply our proposed algorithm. The first row is to reconstruct contrast
$q(x)=0.02$ supported in a rectangle $\Omega=\{x\in \mathbb{R}^2:  |x_1|,|x_2| < 1/2\}$
with $N_2=N_1=16$ and $\tilde{N}_2=\tilde{N}_1=128$, and the second row is to reconstruct contrast ``ship2D'' with
$N_2=N_1=35$ and $\tilde{N}_2=\tilde{N}_1=140$. We observe that the unknowns can still be reasonably identified.

\begin{figure}[htp]
\centering
  \subfloat[]
  {
      \includegraphics[width=0.45\linewidth]{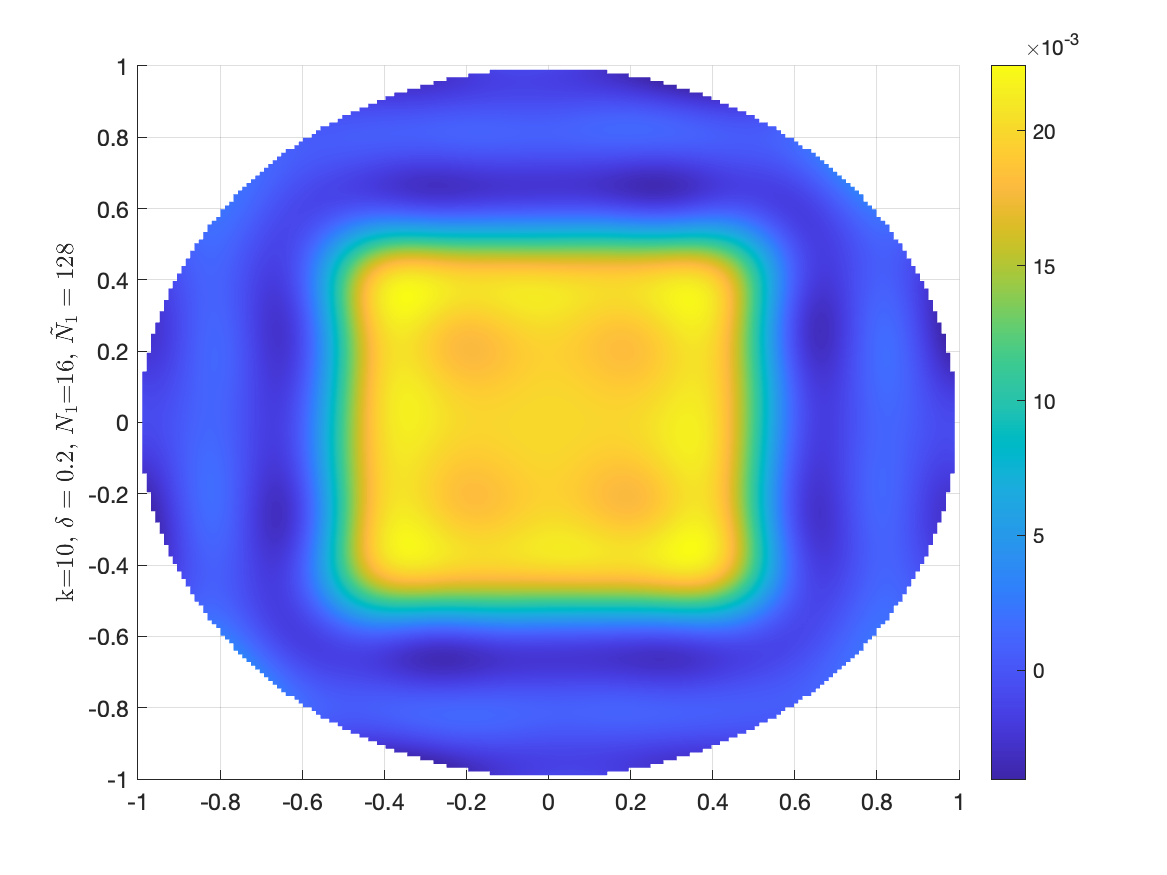}
  }
 \subfloat[]
  {
      \includegraphics[width=0.45\linewidth]{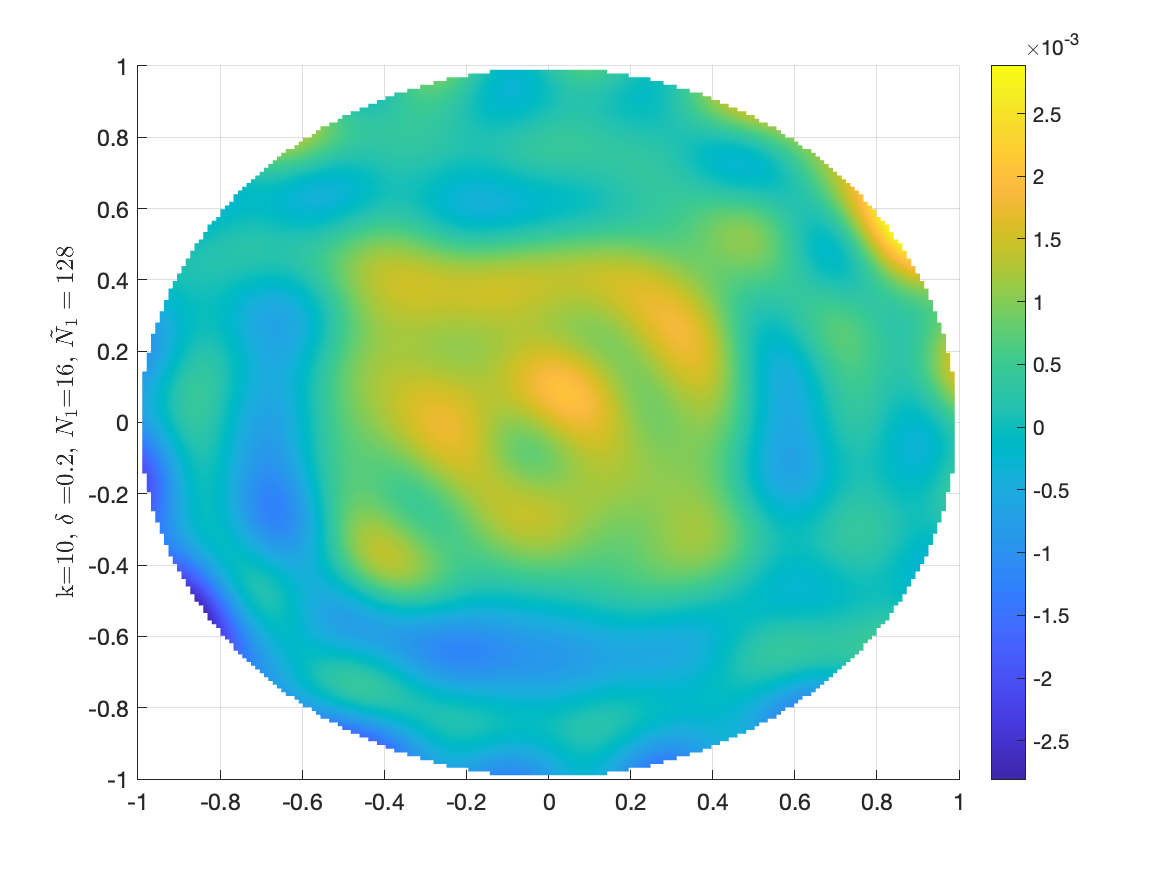}
  } \\
  \subfloat[]
  {
      \includegraphics[width=0.45\linewidth]{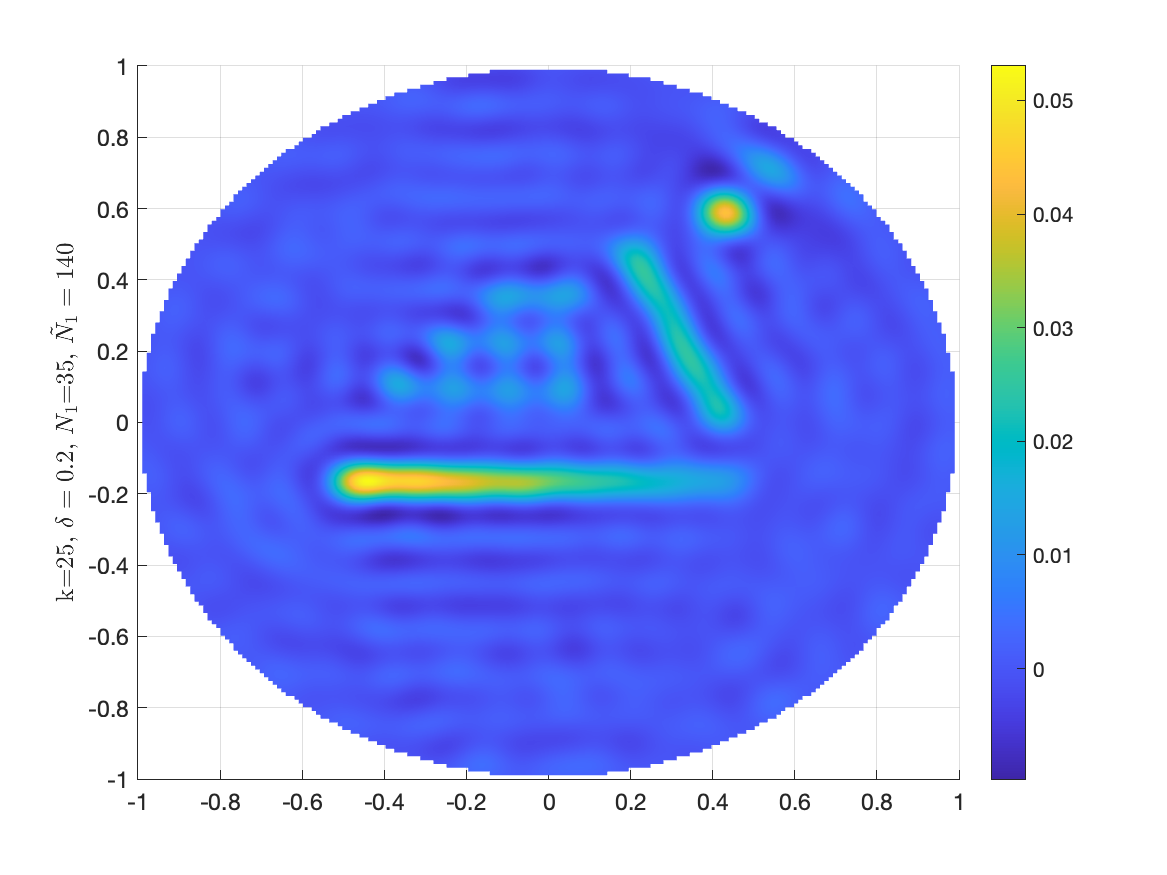}
  }
 \subfloat[]
  {
      \includegraphics[width=0.45\linewidth]{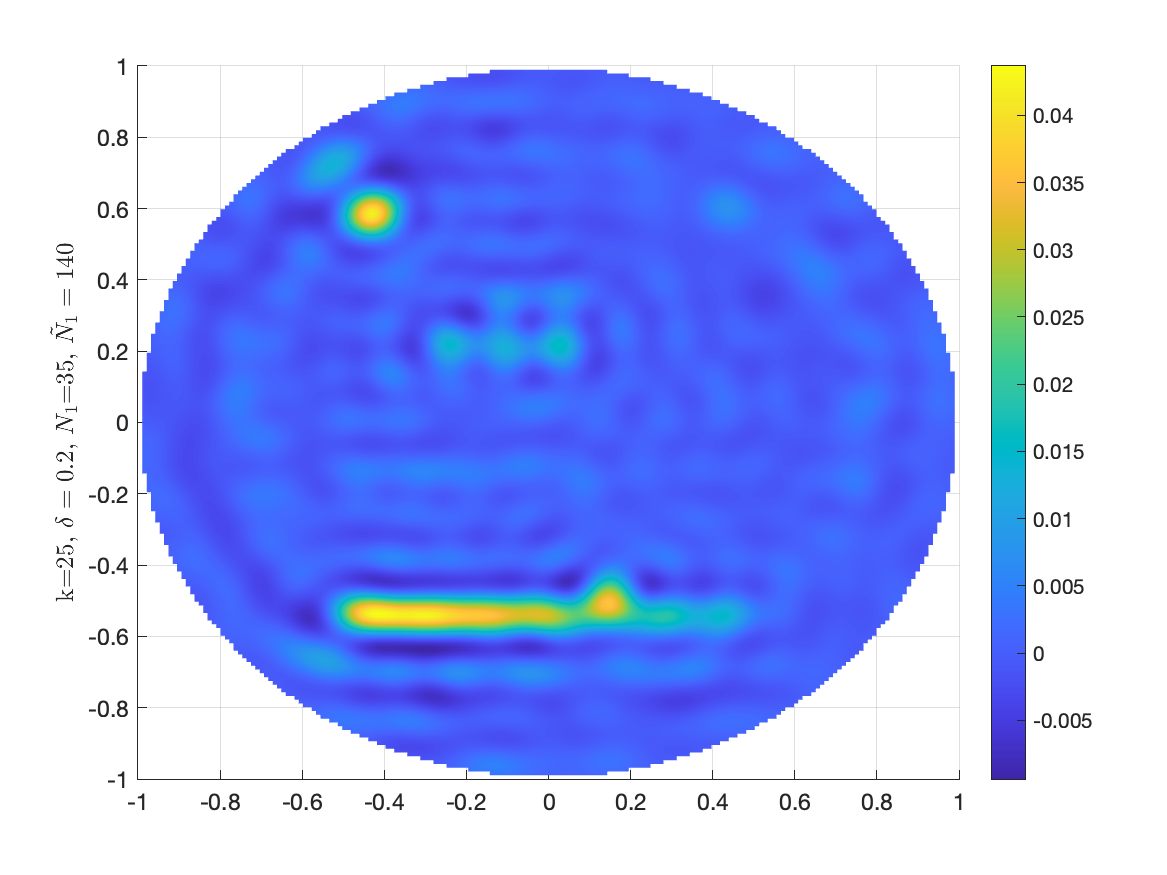}
  }

 \caption{Reconstruction using {  small data set of dimension $16\times 16$ (first row) and $35\times 35$ (second row) with far-field data extrapolation}. Left column: real part, right column: imaginary part.}
\label{figure: Fourier interpolation}
\end{figure}
}

\section*{Acknowledgments}

The work of Bo Zhang was supported by the NNSF of China (grant no. 12431016).

\bibliographystyle{siamplain}

\end{document}